\newtheorem{theorem}{Theorem}
\newtheorem{corollary}[theorem]{Corollary}
\newtheorem{lemma}[theorem]{Lemma}
\newtheorem{proposition}[theorem]{Proposition}
\newtheorem*{bfc}{Boson-Fermion Correspondence}
\theoremstyle{remark}
\newtheorem*{remark*}{Remark}
\theoremstyle{definition}
\newtheorem*{example*}{Example}
\newtheorem{definition}{Definition}
\renewcommand\appendix{\Alph{section}}
\begin{document}

\title{On Hamiltonians for six-vertex models}

\keywords{Six-vertex model, Fock space, discrete time evolution, $\tau$-functions, partition function}

\author{Ben Brubaker}
\address{Department of Mathematics, 127 Vincent Hall, 206 Church St. SE, Minneapolis, MN 55455}
\email{bbrubake@math.umn.edu}

\author[Andrew Schultz]{Andrew Schultz}
\address{Department of Mathematics, Wellesley College, 106 Central Street, Wellesley, MA 02482}
\email{andrew.c.schultz@gmail.com}

\begin{abstract}
In this paper, we explain a connection between a family of free-fermionic six-vertex models and a discrete time evolution operator on one-dimensional Fermionic Fock space. The family of ice models generalize those with domain wall boundary, and we focus on two sets of Boltzmann weights whose partition functions were previously shown to generalize a generating function identity of Tokuyama.  We produce associated Hamiltonians that recover these Boltzmann weights, and furthermore calculate the partition functions using commutation relations and elementary combinatorics. We give an expression for these partition functions as determinants, akin to the Jacobi-Trudi identity for Schur polynomials.
\end{abstract}

\date{\today}

\maketitle

\section{Overview}

Hamiltonians arising from Fock representations of Clifford algebras 
were explored by the Kyoto school, for example in \cite{Kyoto-III, Kyoto-Summary, JimboMiwa}. The Boson-Fermion correspondence
gives an explicit isomorphism between this Fermionic Fock representation and a polynomial algebra, the Bosonic Fock space. The image 
of elements under this correspondence are commonly called ``$\tau$-functions.'' The Kyoto school papers show that $\tau$-functions are solutions
to integrable hierarchies of nonlinear differential equations. Moreover, the Bosonic Fock space may be identified with the ring of symmetric
functions over a field. In particular if the Clifford algebra is $\mathfrak{gl}(\infty)$, then there exists a simple family of $\tau$-functions equal to Schur polynomials. 

Thinking of each application of the Hamiltonian operator as a step in discrete time, the evolution of a one-dimensional model gives rise to a
two-dimensional lattice model. In the case above of the Hamiltonian for $\mathfrak{gl}(\infty)$, the resulting two-dimensional model is the five-vertex
model; a nice exposition of this fact may be found in Zinn-Justin \cite{Zinn-Justin}. Our first result is a generalization of this fact, using a deformation
of the above Hamiltonian operator for $\mathfrak{gl}(\infty)$ whose evolution produces the six-vertex model studied in \cite{bbf-ice}. These models have
boundary conditions generalizing the more familiar domain wall boundary conditions.

As a consequence, the partition functions for these six-vertex models may be studied using methods for evaluating $\tau$-functions, e.g., the time evolution of fermionic fields (given in Proposition~\ref{commutation}) and Wick's theorem. This theme is also present in \cite{Zinn-Justin}, and we borrow many of the same techniques for analyzing our more complicated six-vertex model.
The partition function of our family of six-vertex models was computed (upon using a combinatorial bijection with Gelfand-Tsetlin patterns) by Tokuyama \cite{tokuyama} and subsequently reproved using the Yang-Baxter equation in \cite{bbf-ice}. Here we give an alternate proof of Tokuyama's result using only commutation relations for Hamiltonians and some elementary combinatorial facts. Moreover, we provide new explicit expressions for partition functions as a determinant, akin to the Jacobi-Trudi identity for ordinary Schur polynomials.

The modified $\tau$-functions we study involve a Hamiltonian arising from the theory of super Clifford algebras.  Super Clifford algebras and their (super) Boson-Fermion correspondence were studied by Kac and Van de Leur in \cite{KacVandeLeur}, and the Hamiltonian operators we present in Definition~\ref{superhamdef} are their $\Gamma_{\pm}(x,y)$ in Proposition 4.4 of \cite{KacVandeLeur}. They did not pursue explicit formulas for the resulting $\tau$-function, but combining Wick's theorem and Jacobi-Trudi type formulas identifies them as (skew) supersymmetric Schur polynomials, which we detail in an appendix at the end of the paper.

Variants of the six-vertex models in \cite{bbf-ice} for Cartan types $B$ and $C$ were explored in \cite{bbcg} and \cite{ivanov}, respectively. We will explain how Hamiltonian techniques are likewise applicable to these cases.

Our study of partition functions for lattice models using Hamiltonian operators was motivated by results in the representation theory of $p$-adic groups.
The Shintani-Casselman-Shalika formula is an explicit expression for the spherical Whittaker function of an unramified principal series on a (quasi-split) reductive group over a $p$-adic field. While we refrain from fully defining it here, it is a matrix coefficient on the space of the principal series of particular importance in automorphic forms. Most notably, for the group $\text{GL}(n)$, its special values are precisely the partition function of the six vertex model in \cite{bbf-ice, hamel-king}.
The approach to using Hamiltonians to study Whittaker functions over {\it real} reductive groups was used previously by Kazhdan and Kostant \cite{Kostant, Etingof}, who observed that the Laplacian on the group, restricted to the space of Whittaker functions, is the quantum Toda Hamiltonian. This resulted in a proof of the total integrability of the Toda lattice. In the theory of automorphic forms the role of the Laplacian at real places is replaced by Hecke operators at the $p$-adic places, and a approach to these via Hamiltonians has been put forward in papers of Gerasimov, Lebedev, and Oblezin, e.g., \cite{GLO-baxter}. Viewed in terms of two-dimensional lattice models, these operators act on the column parameters of the model, while ours act on the rows. Given these connections, we hope the results of this paper are a first step toward a Hamiltonian realization of a natural class of operators for representations of $p$-adic groups.

This work was supported by NSF grant DMS-1406238.

\section{Preliminaries and Statement of results}

\subsection{The Boson-Fermion correspondence and $\tau$-functions} To give a precise statement of our results, we first define the relevant Hamiltonian operators from the Clifford algebra $\mathfrak{gl}(\infty)$ and recall the Boson-Fermion correspondence in this case. Further information can be found in \cite{JimboMiwa}, \cite{AlexandrovZabrodin}, and \cite{Zinn-Justin}. Our notational conventions more closely follow this latter reference -- in particular we locate fermions at half-integers.

Let $A$ denote the Clifford algebra generated by creation and annilation operators $\psi_i^\ast$ and $\psi_i$ with $i \in \mathbb{Z}-\frac{1}{2}$ and
satisfying relations
\begin{equation} [ \psi_i, \psi_j ]_+ = 0, \quad [ \psi_i, \psi_j^\ast ]_+ = \delta_{i,j}, \quad [\psi_i^\ast, \psi_j^\ast]_+ = 0, \label{pluscomm} \end{equation}
where $[ x, y]_+ = xy + yx$.  We create generating functions on these operators: 
\begin{align*}
\psi(z) = \sum_{k \in \mathbb{Z}-\frac{1}{2}} \psi_{-k} z^{k-\frac{1}{2}} \quad &\quad 
\psi^*(z) = \sum_{k \in \mathbb{Z}-\frac{1}{2}} \psi_{k}^* z^{k+\frac{1}{2}}.
\end{align*}

Then let $\mathcal{W}$ be the set of ``free-fermions'' defined by
$$ \mathcal{W} := \oplus_{i \in \mathbb{Z}} \mathbb{C} \psi_i \oplus \oplus_{i \in \mathbb{Z}} \mathbb{C} \psi_i^\ast $$
with subspaces
$$ \mathcal{W}_{\text{ann}} :=  \oplus_{i < 0} \mathbb{C} \psi_i^\ast \oplus \oplus_{i > 0} \mathbb{C} \psi_i \quad 
\mathcal{W}_{\text{cr}} := \oplus_{i > 0} \mathbb{C} \psi_i^\ast \oplus \oplus_{i < 0} \mathbb{C} \psi_i. $$
Let $\mathcal{F}$ denote the left $A$-module $A / A \mathcal{W}_{\text{ann}}$ and let $\mathcal{F}^\ast$ denote the right $A$-module $A \mathcal{W}_{\text{cr}} \backslash A$; these are termed the ``Fock representations of $A$.'' They are cyclic $A$-modules whose generators are typically denoted with bra-ket notation $| 0 \rangle$ mod $A \mathcal{W}_{\text{ann}}$ and $\langle 0 |$ mod $A \mathcal{W}_{\text{cr}}$, respectively, for some choice of ``vacuum vector'' 0. There is a symmetric bilinear form on $\mathcal{F}^\ast \otimes_A \mathcal{F}$ denoted by $\langle 0 | a \otimes b | 0 \rangle$.

The space $\mathcal{F}$ is also a $\mathfrak{gl}(\infty)$ module, where we define
$$ \mathfrak{gl}(\infty) := \mathbb{C} \cdot 1 \oplus \left\{ \sum_{i,j} a_{i,j} : \psi_i^\ast \psi_j : \; \mid \; \exists \, N \, \text{such that} \, a_{i,j} = 0 \text{ if $| i - j| > N$} \right\}, $$
with $: \psi_i^\ast \psi_j :$ regarded as abstract symbols satisfying a Lie bracket defined as in (1.6) of \cite{JimboMiwa} and $1$ is a central element. The notation $: \psi_i^\ast \psi_j :$ is used elsewhere to mean the ``normal ordering'' with respect to the vacuum vector $| 0 \rangle$ defined by
$$ : \psi_i^\ast \psi_j : = - : \psi_j \psi_i^\ast : = \begin{cases} \psi_i^\ast \psi_j & \text{if $i > 0$} \\  - \psi_j \psi_i^\ast & \text{if $i < 0$} \end{cases} $$
which is useful in eliminating trivial infinite quantities from the definitions.
The action of
$X = \sum_{i,j} a_{i,j} : \psi_i^\ast \psi_j :$ on an element $a | 0 \rangle \in \mathcal{F}$ is given by
$$ X \cdot a | 0 \rangle := \text{ad} \, X (a) | 0 \rangle + a \cdot \sum_{i > 0 > j} a_{i,j} \psi_i^\ast \psi_j | 0 \rangle  $$

For each $n \in \mathbb{Z}$ define the elements
$$ J_n = \sum_{i \in \mathbb{Z}-\frac{1}{2}} : \psi_{i-n}^\ast \psi_{i} : $$
which satisfy the commutation relations $[J_m, J_n] = m \delta(m+n) \cdot 1$ where $\delta$ denotes the Kronecker delta symbol. 
This allows us to define, associated to infinitely many parameters $t = (t_1, t_2, \ldots)$ an action by the element
\begin{equation}\label{eq:hamiltonian.intro} H[t] := \sum_{n=1}^\infty t_n J_n \quad \text{and} \quad e^{H[t]} := \sum_{k=1}^\infty \frac{H[t]^k}{k!} \end{equation}
on $\mathcal{F}$. 

The element $J_0$ is central in $\mathfrak{gl}(\infty)$ and so we may decompose $\mathcal{F}$ as a direct sum of $J_0$-eigenspaces $\mathcal{F}_\ell$ indexed by integer eigenvalues $\ell$. These are irreducible representations of $\mathfrak{gl}(\infty)$ with highest weight vector $| \ell \rangle$ defined by
\begin{equation} | \ell \rangle = \begin{cases} \psi_{\ell+\frac{1}{2}} \cdots \psi_{-\frac{1}{2}} | 0 \rangle & \text{if $\ell < 0$} \\
1 \Ket{0} & \text{if $\ell = 0$} \\ \psi^\ast_{\ell-\frac{1}{2}} \cdots \psi^\ast_{\frac{1}{2}} \Ket{0} & \text{if $\ell > 0$}. \end{cases} \label{ellstate} \end{equation} 
There is an analogous story for the the Fock space $\mathcal{F}^\ast$ with highest weight representations indexed by $\Bra{\ell}$.

\begin{bfc} The following map is an isomorphism of vector spaces from $\mathcal{F}_\ell$ to $V_\ell$, where each $V_\ell \simeq \mathbb{C}[t]$:
$$ a \mid 0 \rangle \longmapsto \langle \ell \mid e^{H[t]} a \mid 0 \rangle. $$
\end{bfc}
Jimbo and Miwa \cite{JimboMiwa} justify this by noting that the image consists of Schur functions, which give a basis for the polynomial ring in infinitely many variables $t = (t_1, t_2, \ldots)$. In particular, given $\ell \in \mathbb{Z}$ and $\lambda$ a non-increasing sequence $\lambda_1 \geq \lambda_2 \geq \cdots \geq \lambda_n \geq 0$, we define 
\begin{equation} \Ket{\lambda;\ell} = \Ket{\psi^*_{\ell+\lambda_1-\frac{1}{2}} \psi^*_{\ell+\lambda_2 - \frac{3}{2}} \cdots \psi^*_{\ell+\lambda_n-n+\frac{1}{2}}|\ell-n}. \label{partitionwithshift} \end{equation}
(Note that if we pad the partition $\lambda$ with additional $0$ parts, the state of free fermions is unchanged.) 
Then the Schur function is realized as
$$ \Bra{\ell} e^{H[t]} \Ket{\lambda; \ell} =: s_\lambda[t], $$
where the variables $t_q$ in $t = (t_1, t_2, ...)$ are (up to a simple factor of $\frac{1}{q}$) equal to the power sum symmetric functions. Thus making the change of variables:
\begin{equation} t_q = \frac{1}{q} \sum_{i=1}^n x_i^q \label{simplemiwa} \end{equation}
gives the symmetric function $s_\lambda$ as a symmetric polynomial in the variables $x_1, \ldots, x_n$. Think of this change of variables as $t_q = \frac{1}{q} \text{tr}(X^q)$ for the matrix $X \in \mathfrak{gl}(n, \mathbb{C})$ with eigenvalues $(x_1, \ldots, x_n)$; this substitution appears in this form as Equation (1.3) in Miwa \cite{Miwa}.

As in Section~2 of \cite{JimboMiwa}, any polynomial in $t$ expressible in the form
$$ \tau_{\ell}(t; g) := \Bra{\ell} e^{H[t]} g \Ket{\ell} $$
for some $g = \text{exp}( \sum_{i,j} a_{i,j} \psi_i^\ast \psi_j)$ will be referred to as a $\tau$-function. Our main results of the next section will be explicit expressions for variants of the $\tau$-functions under generalizations
of the Miwa transformation in \eqref{simplemiwa}.

\subsection{$\tau$-functions for superalgebra Hamiltonians}

In order to describe our results on $\tau$-functions, first consider the following generalizations of the transformation in \eqref{simplemiwa}, and
the subsequent Hamiltonian operators that arise from them.

\begin{definition} \label{superhamdef}
For $\mathbf{x} = (x_1,\cdots,x_n)$ and $q \geq 1$, define
\begin{align*}
s_q^+ := \frac{1}{q} \sum_{i=1}^n x_i^q \quad \quad \quad \quad 
s_q^- := \frac{1}{q} \sum_{i=1}^n x_i^{-q}.
\end{align*}
Let $H_\pm[s] = \sum_{q \geq 1} s_q^\pm J_{\pm q}$.  Then define $\phi_\pm(x_i)$ by $e^{H_\pm[s]} = \prod_{i=1}^n e^{\phi_\pm(x_i)}.$

Similarly, to an infinite set of variables $t = (t_1, t_2, \ldots)$, let
\begin{align*}
s_q^+(t) := \frac{1}{q} \sum_{j=1}^n(1- (-t_j)^{q})  x_j^q \quad \quad \quad \quad 
s_q^-(t) := \frac{1}{q} \sum_{j=1}^n(1- (-t_j)^{q}) x_j^{-q}.
\end{align*}
Let $H_\pm[s(t)] = \sum_{q \geq 1}s_q^\pm(t)J_{\pm q}$, and define $\phi_\pm(x_i;t_i)$ by $e^{H_\pm[s(t)]} = \prod_{j=1}^n e^{\phi_\pm(x_j;t_j)}.$
\end{definition}

Just as $t_q$ in \eqref{simplemiwa} could be regarded as a trace, the elements $s_q^{\pm}(t)$ may be viewed as supertraces (as defined, for example, in Section I.1.5 of \cite{Kac}). Indeed if $V = V_{\bar{0}} \oplus V_{\bar{1}}$ is a
$\mathbb{Z} / 2 \mathbb{Z}$ graded vector space with basis given by a union of bases for $V_{\bar{0}}$ and $V_{\bar{1}}$, then an operator $a \in \text{End}(V)$, viewed as a Lie superalgebra, has block form
$$ a = \begin{pmatrix} \alpha & \beta \\ \gamma & \delta \end{pmatrix} \quad \text{with supertrace} \quad \text{str}(a) := \text{tr}(\alpha) - \text{tr}(\delta). $$ 
Thus, for example, $s_q^+(t) = \frac{1}{q} \text{str}(a^q)$ in $\mathfrak{gl}(n | n)$ where the eigenvalues of $a$ are $x_1, \ldots, x_n$ in the $V_{\bar{0}}$ component and $-t_1 x_1, \ldots, -t_n x_n$ in the $V_{\bar{1}}$ component. As noted in the Overview, these Hamiltonians arise in the study of superalgebras and are denoted $\Gamma_{\pm}(x,y)$ in Proposition 4.4 of \cite{KacVandeLeur}, where we have set $y = -tx$.

By combining Wick's theorem and the Jacobi-Trudi formula for (skew) supersymmetric Schur functions, we obtain the following formula for the $\tau$-function of the superalgebra Hamiltonian in Definition~\ref{superhamdef}:
\begin{equation} \langle \mu ; n-1 \mid e^{H_+[s(t)]} \mid \lambda; n \rangle =  \det_{1 \leq p,q \leq n} h_{\lambda_q-\mu_p-q+p}[x \mid y] = s_{\lambda / \mu}(x | tx), \label{plainsuperhamtau} \end{equation}
with $h_k$ as defined in \eqref{splithk}. The first equality above, using Wick's theorem, is no harder than for the $\mathfrak{gl}(\infty)$ Hamiltonian in \eqref{eq:hamiltonian.intro} as it is obtained from a formal change of variables. The second equality may be found on p.~22 of Macdonald \cite{Mac-themevar} or, in the non-skew case, Equation~(1.7) in Moens and Van der Jeugt~\cite{MoensVanderJeugt}. Details for the first equality above are presented in an appendix at the end of the paper.

In what follows, we make a small but very important change in the $\tau$-functions in \eqref{plainsuperhamtau}. We replace $e^{H_{+}[s(t)]} = \prod_{i=1}^n e^{\phi_+(x_i;t_i)}$ by $\prod_{i=1}^n \left[ e^{\phi_+(x_i;t_i)} \psi_{-1/2} \right]$ in the bra-ket, inserting the annihilation operator $\psi_{-1/2}$ between each application of $e^{\phi_+(x_i;t_i)}$; we make a similar adjustment to $e^{H_-[s(t)]}$. This radically changes the analysis for the resulting $\tau$-functions. Perhaps they should no longer be referred to as ``$\tau$-functions'' since these are typically expectation values of 
$e^{H_{\pm}[t]}$ multiplied by group-like elements -- exponentiated linear combinations of elements $\psi^\ast_i \psi_k$ (see for example Section~3 of \cite{AlexandrovZabrodin}). By contrast, the operators $e^{\phi_+(x_i;t_i)} \psi_{-1/2}$ destroy one particle with each application. Remarkably, this change is precisely what is needed for the bra-ket to match the partition function of two-dimensional ice-type lattice models studied in \cite{bbf-ice}.

To state our results, we introduce one further shorthand notation. In the event that $\lambda$ is strictly decreasing, we have that $\lambda-\rho = [\lambda_1-n,\cdots,\lambda_n-1]$ is non-decreasing, in which case we write $$\Ket{\lambda} := \Ket{\lambda-\rho;n}.$$  Observe that one can also view $\Ket{\lambda}$ as the coefficient of $\mathbf{z}^\lambda := \prod z_i^{\lambda_i}$ in $\psi^*(z_1)\cdots \psi^*(z_n)\Ket{0}$.

Our main results are the following.

\begin{theorem}\label{th:computing.weight.of.one.step}
 Let $\lambda = (\lambda_1, \ldots, \lambda_n)$ and $\mu = (\mu_1, \ldots, \mu_{n-1})$ be strictly decreasing, and let $k>\lambda_1$. Then 
\begin{align*}
\Braket{ \mu | e^{\phi_+(x;t)} \psi_{-1/2} | \lambda } &= \left\{\begin{array}{ll}(-1)^n t^{r(\lambda;\mu)} (1+t)^{s(\lambda; \mu)-r(\lambda;\mu)+1} x^{|\lambda|-|\mu|} & \mbox{ if }\lambda_i \geq \mu_i \geq \lambda_{i+1} \text{ for all }i, \\ [5pt]0 &\mbox{otherwise}.\end{array}\right. \\[10pt]
\Braket{ \mu | \psi_{k-\frac{1}{2}} e^{\phi_-(x;t)} | \lambda } &= \left\{\begin{array}{ll}t^{l(\lambda;\mu)}(1+t)^{s(\lambda; \mu)-r(\lambda;\mu)+1}x^{|\lambda|-|\mu|-k}& \mbox{ if }\lambda_i \geq \mu_i \geq \lambda_{i+1} \text{ for all }i, 
\\[5pt]0& \mbox{ otherwise}.\end{array}\right. 
\end{align*}
where $r(\lambda; \mu) = \left|\{1 \leq i \leq n-1: \mu_i = \lambda_{i+1}\}\right|$; $l(\lambda;\mu) = \left|\{1 \leq i \leq n-1: \mu_i = \lambda_i\}\right|$; and $s(\lambda; \mu) = \left|\{1 \leq i \leq n-1: \lambda_i > \mu_i\}\right|$. The notation $| \lambda |$ means the sum of the parts of the partition $\lambda$.
\end{theorem}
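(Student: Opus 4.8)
The plan is to turn each bracket into a fermionic vacuum expectation, use the time‑evolution relation to slide $e^{\phi_\pm(x;t)}$ onto a vector it fixes, recognise what remains as a single $n\times n$ determinant, and evaluate that determinant by elementary combinatorics. \textbf{Reducing the first bracket.} Since $\lambda$ is strictly decreasing (we take its smallest part positive, so the states do not vanish), \eqref{partitionwithshift} gives $\Ket{\lambda}=\psi^*_{\lambda_1-1/2}\cdots\psi^*_{\lambda_n-1/2}\Ket0$, and similarly $\Ket{\mu}$ with its dual $\Bra\mu$, normalised so $\Braket{\mu|\mu}=1$. Now $e^{\phi_+(x;t)}$ fixes both $\Ket0$ and $\Ket{-1}$; one has $\psi_{-1/2}\Ket{\lambda}=(-1)^n\psi^*_{\lambda_1-1/2}\cdots\psi^*_{\lambda_n-1/2}\Ket{-1}$; and Proposition~\ref{commutation} gives $e^{\phi_+(x;t)}\psi^*(z)e^{-\phi_+(x;t)}=\tfrac{1+txz}{1-xz}\psi^*(z)$. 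Combining these,
\[ \Braket{\mu|e^{\phi_+(x;t)}\psi_{-1/2}|\lambda}=(-1)^n\Bigl[\textstyle\prod_j z_j^{\lambda_j}\Bigr]\ \prod_{j=1}^{n}\frac{1+txz_j}{1-xz_j}\ \Braket{\mu|\psi^*(z_1)\cdots\psi^*(z_n)|-1}. \]
Since $\Ket{-1}=\psi_{-1/2}\Ket0$ and $[\psi^*(z),\psi_{-1/2}]_+=1$, commuting $\psi_{-1/2}$ leftward through the $\psi^*(z_j)$'s (only the anticommutator terms survive pairing against $\Bra\mu$) turns the last bracket into $\sum_{j}(-1)^{n-j}\Braket{\mu|\psi^*(z_1)\cdots\widehat{\psi^*(z_j)}\cdots\psi^*(z_n)|0}$; using $\Braket{\mu|\psi^*(w_1)\cdots\psi^*(w_{n-1})|0}=\det_{1\le a,b\le n-1}(w_a^{\mu_b})$ and recognising the alternating sum over $j$ as a Laplace expansion, this becomes $\det_{1\le a,b\le n}(z_a^{\nu_b})$ with $\nu=(\mu_1,\dots,\mu_{n-1},0)$. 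The product of this alternating polynomial with the symmetric factor $\prod_j\frac{1+txz_j}{1-xz_j}$ is again alternating, so extracting the coefficient of $\prod_j z_j^{\lambda_j}$ collapses everything to a determinant of coefficients:
\[ \Braket{\mu|e^{\phi_+(x;t)}\psi_{-1/2}|\lambda}=(-1)^n\det_{1\le a,b\le n}\!\bigl(h_{\lambda_a-\nu_b}[x\mid tx]\bigr),\quad h_0[x\mid tx]=1,\ \ h_k[x\mid tx]=(1+t)x^{k}\ (k\ge1), \]
the $(a,b)$ entry being $0$ whenever $\lambda_a<\nu_b$.

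\textbf{Evaluating the determinant.} From the zero pattern one checks the determinant vanishes unless $\lambda_i\ge\mu_i\ge\lambda_{i+1}$ for every $i$: an ``upward'' failure $\mu_i>\lambda_i$ confines columns $1,\dots,i$ to rows $1,\dots,i-1$, obstructing any nonzero term by pigeonhole, while a ``downward'' failure $\mu_i<\lambda_{i+1}$ forces a sign‑reversing cancellation (the relevant corner block becomes rank one). When the interleaving holds, the entry is $0$ for $b\le a-2$ and the subdiagonal entry $h_{\lambda_a-\nu_{a-1}}[x\mid tx]$ equals $1$ exactly when $\mu_{a-1}=\lambda_a$, so the matrix is upper Hessenberg with exactly $r(\lambda;\mu)$ subdiagonal $1$'s. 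Partitioning $\{1,\dots,n\}$ into the maximal runs joined by those $1$'s makes the matrix block upper‑triangular, and a short continuant computation evaluates the determinant of a run of length $g$ as $t^{\,g-1}(1+t)^{\varepsilon}x^{\bullet}$, with $\varepsilon=0$ precisely for a one‑element run having $\mu_i=\lambda_i$ and $\varepsilon=1$ otherwise. Adding up exponents over the runs: the $t$‑degree is the total number of subdiagonal $1$'s, $r(\lambda;\mu)$; the $(1+t)$‑degree is the number of runs that are not equality‑singletons, $(n-r)-(n-1-s)=s(\lambda;\mu)-r(\lambda;\mu)+1$; and the $x$‑degrees telescope to $\sum_{i<n}(\lambda_i-\mu_i)+\lambda_n=|\lambda|-|\mu|$. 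With the overall $(-1)^n$ this is the first formula.

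\textbf{The second identity.} Here $\Bra\mu\psi_{k-1/2}=\Bra{\tilde\mu}$ with $\tilde\mu=(k,\mu_1,\dots,\mu_{n-1})$ strictly decreasing (since $k>\lambda_1\ge\mu_1$), and one runs the same argument with $e^{\phi_-(x;t)}$. Its time‑evolution relation is $e^{\phi_-(x;t)}\psi^*_{\ell}e^{-\phi_-(x;t)}=\psi^*_{\ell}+(1+t)\sum_{m\ge1}x^{-m}\psi^*_{\ell+m}$ (index‑raising), and since $(e^{\phi_-(x;t)}-1)\Ket0$ is a sum of states each carrying a hole that these operators cannot fill, $e^{\phi_-(x;t)}$ may be dropped when pairing with the $n$‑particle bra $\Bra{\tilde\mu}$. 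The bracket then collapses to a determinant of the same shape for the pair $(\lambda,\tilde\mu)$ — lower Hessenberg this time, with superdiagonal $1$'s exactly where $\mu_i=\lambda_i$, of which there are $l(\lambda;\mu)$ — and the identical run decomposition and bookkeeping (now using $l+s=n-1$) yield $t^{\,l(\lambda;\mu)}(1+t)^{s(\lambda;\mu)-r(\lambda;\mu)+1}x^{|\lambda|-|\mu|-k}$. The main obstacle throughout is this determinant evaluation: pinning down the reordering signs, checking that each run's determinant really collapses to the single monomial $t^{\,g-1}(1+t)^{\varepsilon}x^{\bullet}$, and — for the degenerate configurations — confirming that the ``$0$ otherwise'' comes from genuine cancellation rather than an empty support.
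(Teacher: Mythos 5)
Your proof is correct, and while its opening reduction coincides with the paper's (write $\Ket{\lambda}$ as a coefficient of $\psi^\ast(z_1)\cdots\psi^\ast(z_n)\Ket{0}$, pull $e^{\phi_+(x;t)}$ through with Proposition~\ref{commutation}, note it fixes $\Ket{-1}$, and evaluate the remaining correlator as the alternating sum, i.e.\ the determinant $\det(z_a^{\nu_b})$ with $\nu=(\mu_1,\dots,\mu_{n-1},0)$), the endgame is organized differently. The paper works directly with $\sum_w(-1)^{\ell(w)}\boldsymbol{z}^{w(\mu_-)}\prod_j\tfrac{1+txz_j}{1-xz_j}\big\vert_{\boldsymbol{z}^\lambda}$: a pigeonhole argument and the sign-reversing pairing $w\mapsto ws_i$ give the vanishing outside interleaving, and in the interleaving case only the permutations $\sigma_J$, $J\subseteq\mathcal{I}$, survive, with the subset sum $\sum_J(-1)^{|J|}(1+t)^{|\mathcal{I}|-|J|}=t^{|\mathcal{I}|}$ finishing the count. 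You instead extract the coefficient first, landing on the Jacobi--Trudi-type determinant $\det\bigl(h_{\lambda_a-\nu_b}[x\mid tx]\bigr)$ (essentially the object the appendix reaches via Wick's theorem) and evaluate it by the Hessenberg/run-block decomposition; your run determinant $u(u-1)^{g-1}$ with $u=1+t$ is exactly the paper's subset-sum identity localized to one run, and your bookkeeping ($t$-degree equal to the number of subdiagonal $1$'s, namely $r(\lambda;\mu)$; $(1+t)$-degree equal to the number of runs that are not equality singletons, namely $s-r+1$; $x$-degree telescoping to $|\lambda|-|\mu|$) matches the stated formula, as do your pigeonhole and rank-one-block vanishing arguments. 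A genuine plus is that you treat the second bra-ket explicitly (absorbing $\psi_{k-1/2}$ into $\Bra{\tilde\mu}$, the index-raising evolution of $\psi^\ast_\ell$ under $e^{\phi_-}$, and discarding $(e^{\phi_-}-1)\Ket{0}$ by the hole argument), whereas the paper leaves that case implicit. One small slip: you justify that $\tilde\mu=(k,\mu_1,\dots,\mu_{n-1})$ is strictly decreasing by ``$k>\lambda_1\ge\mu_1$'', but $\lambda_1\ge\mu_1$ is part of the interleaving not yet established at that point; this is harmless, since if $\mu_1\ge k>\lambda_1$ the bracket vanishes anyway (either $\Bra{\mu}\psi_{k-1/2}=0$ when $\mu_1=k$, or after reordering with a sign the same rank-one/pigeonhole argument kills the determinant because $\mu_1>\lambda_1$), in agreement with the claimed zero --- but it should be phrased that way rather than assumed.
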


The previous result tells us how to compute the weight associated with the evolution of $\lambda$ to $\mu$ from one application of our (shifted) Hamiltonians.  If $\lambda$ has $n$ parts, it follows that the only state which can appear after $n$ successive applications of, for example, the $e^{\phi_+(x;t)}\psi_{-\frac{1}{2}}$ operator is one of the form $\Ket{\psi^*_{m-\frac{1}{2}}|-1}$ for some $0 \leq m \leq \lambda_1$.  (An analogous result holds in the other case.)  In the following theorem we compute the weight associated to $m = 0$; i.e., when the result of $n$ applications yields the vacuum. When we wish to use solely bra-kets made from partitions, the empty partition $\emptyset$ (which may be padded with $0$'s) is used to denote the vacuum state. 

\begin{theorem}\label{th:factorization.of.partition.function} Let $\lambda = (\lambda_1, \ldots, \lambda_n)$. Then with $\rho = (n,\cdots,2,1)$ we have
\begin{align*} 
\Braket{ \emptyset | \prod_{i=1}^n \left[ e^{\phi_+(x_i;t_i)} \psi_{-1/2} \right] | \lambda} &= \prod_{i=1}^n \left((-1)^i x_i(t_i+1)\right)  \left[ \prod_{i < j} (x_i + t_jx_j) \right] s_{\lambda-\rho}(x_1,\cdots,x_n)\\
\Braket{ \emptyset | \prod_{i=1}^n \left[\psi_{\lambda_1+\frac{1}{2}} e^{\phi_-(x_{n-i+1};t_{n-i+1})} \right] | \lambda} &=\prod_{i=1}^n \left(x_i^{-\lambda_1}(t_i+1)\right) \left[\prod_{i<j} (x_i+t_ix_j)\right] s_{\lambda-\rho}(x_1,\cdots,x_n),
\end{align*}
where the product in the bra-ket is taken so that $e^{\phi_+(x_n;t_n)}$ is rightmost in the first product, and $e^{\phi_-(x_1;t_1)}$ is rightmost in the second product.
\end{theorem}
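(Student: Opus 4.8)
\section*{Proof proposal}

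The plan is to prove the first identity by induction on $n$, peeling off the right-most factor $e^{\phi_+(x_n;t_n)}\psi_{-1/2}$ and invoking Theorem~\ref{th:computing.weight.of.one.step}; the second identity then follows by the mirror-image argument. Write $Z_n(\lambda) := \Braket{\emptyset | \prod_{i=1}^n [e^{\phi_+(x_i;t_i)}\psi_{-1/2}] | \lambda}$. Since the states $\Ket{\mu}$, for $\mu$ strictly decreasing with $n-1$ parts, form an orthonormal basis of the relevant Fock component, expanding $e^{\phi_+(x_n;t_n)}\psi_{-1/2}\Ket{\lambda}$ in this basis and applying Theorem~\ref{th:computing.weight.of.one.step} gives the recursion
\begin{equation*}
Z_n(\lambda) \;=\; \sum_{\mu} (-1)^n\, t_n^{\,r(\lambda;\mu)}(1+t_n)^{\,s(\lambda;\mu)-r(\lambda;\mu)+1}\, x_n^{\,|\lambda|-|\mu|}\; Z_{n-1}(\mu),
\end{equation*}
the sum running over strictly decreasing $\mu=(\mu_1>\cdots>\mu_{n-1})$ with $\lambda_i \geq \mu_i \geq \lambda_{i+1}$ for all $i$, and $Z_{n-1}(\mu)$ the analogous $(n-1)$-fold bra-ket in the variables $x_1,\ldots,x_{n-1}$, $t_1,\ldots,t_{n-1}$. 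The base case $n\leq 1$ is immediate from Theorem~\ref{th:computing.weight.of.one.step} with $\mu=\emptyset$.

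For the inductive step I would substitute the induction hypothesis for $Z_{n-1}(\mu)$, pull the $\mu$-independent factors out of the sum, and observe that the sign and monomial bookkeeping --- $(-1)^n(-1)^{\binom n2}=(-1)^{\binom{n+1}2}$ together with $\prod_{1\le i<j\le n}(x_i+t_jx_j) = \big(\prod_{1\le i<j\le n-1}(x_i+t_jx_j)\big)\prod_{i=1}^{n-1}(x_i+t_nx_n)$ --- reduces the claim to the single ``one-row'' identity
\begin{equation*}
\sum_{\mu} t_n^{\,r(\lambda;\mu)}(1+t_n)^{\,s(\lambda;\mu)-r(\lambda;\mu)+1}\, x_n^{\,|\lambda|-|\mu|}\, s_{\mu-\rho'}(x_1,\ldots,x_{n-1}) \;=\; x_n(1+t_n)\Big(\prod_{i=1}^{n-1}(x_i+t_nx_n)\Big)\, s_{\lambda-\rho}(x_1,\ldots,x_n),
\end{equation*}
with $\rho'=(n-1,\ldots,2,1)$ and the sum as above. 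This identity --- itself a one-row form of Tokuyama's formula --- is the combinatorial heart of the theorem, and it is where the elementary combinatorics advertised in the introduction enters.

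To prove it I would first rewrite the weight multiplicatively: exactly one of $\mu_i=\lambda_i$, $\lambda_i>\mu_i>\lambda_{i+1}$, or $\lambda_i>\mu_i=\lambda_{i+1}$ holds at each $i$, so $t_n^{\,r(\lambda;\mu)}(1+t_n)^{\,s(\lambda;\mu)-r(\lambda;\mu)+1}$ equals $(1+t_n)$ times a product of local factors $1$, $1+t_n$, $t_n$ respectively, one for each $i$. Next I would expand the right side: the branching rule applied to $s_{\lambda-\rho}$ in the variable $x_n$, rewritten via $\kappa=\mu-\rho'$, yields a sum over strictly decreasing $\mu$ satisfying the \emph{strict} interlacing $\lambda_i>\mu_i\geq\lambda_{i+1}$, while $\prod_{i=1}^{n-1}(x_i+t_nx_n)=\sum_k (t_nx_n)^{n-1-k}e_k(x_1,\ldots,x_{n-1})$, with the Pieri rule governing each product $e_k\cdot s_{\mu-\rho'}$ (equivalently, one may expand the bialternant $s_{\lambda-\rho}(x)=\det(x_j^{\lambda_i-1})/\prod_{i<j}(x_i-x_j)$ along its last column). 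Matching the two sides is then a purely combinatorial statement about interlacing sequences and vertical strips, which I expect to finish with a sign-reversing involution (or an explicit bijection) identifying, for each weakly interlacing $\mu$, the contributions of the positions with $\mu_i=\lambda_i$ with the boxes supplied by the $e_k$-factors attached to the strictly interlacing terms.

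The main obstacle is exactly this matching. The delicate point is that the Hamiltonian sum ranges over the weak interlacing $\lambda_i\geq\mu_i\geq\lambda_{i+1}$, whereas the branching rule for $s_{\lambda-\rho}$, because of the shift by $\rho$, produces only the strict interlacing $\lambda_i>\mu_i\geq\lambda_{i+1}$; reconciling these while keeping the powers of $t_n$ synchronized with the $e_k$/Pieri expansion of $\prod_i(x_i+t_nx_n)$ --- rather than verifying the identity by brute force --- is the crux of the argument. Once the first identity is established, the second follows by the entirely parallel induction that peels off the left-most-acting factor $\psi_{\lambda_1+\frac12}e^{\phi_-(x_1;t_1)}$ and uses the second formula of Theorem~\ref{th:computing.weight.of.one.step}: the rows are now applied in the opposite order --- hence the product $\prod_{i<j}(x_i+t_ix_j)$ and the prefactor $\prod_i x_i^{-\lambda_1}(t_i+1)$, the latter without signs since the second weight in Theorem~\ref{th:computing.weight.of.one.step} carries no factor $(-1)^n$.
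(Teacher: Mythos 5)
Your inductive strategy is genuinely different from the paper's argument, and its skeleton is sound: the recursion obtained by peeling off $e^{\phi_+(x_n;t_n)}\psi_{-1/2}$ and the sign/product bookkeeping do reduce the first identity to the one-row statement
\begin{equation*}
\sum_{\mu}\,t_n^{\,r(\lambda;\mu)}(1+t_n)^{\,s(\lambda;\mu)-r(\lambda;\mu)+1}\,x_n^{\,|\lambda|-|\mu|}\;s_{\mu-\rho'}(x_1,\ldots,x_{n-1})
\;=\;x_n(1+t_n)\Bigl(\prod_{i=1}^{n-1}(x_i+t_nx_n)\Bigr)\,s_{\lambda-\rho}(x_1,\ldots,x_n),
\end{equation*}
with $\mu$ running over the weak interlacings of $\lambda$. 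But this identity is exactly where the theorem lives --- it is the deformed ($t$-weighted) branching rule that underlies Tokuyama's formula --- and you do not prove it: you describe how you would expand both sides (branching of $s_{\lambda-\rho}$ in $x_n$, Pieri for the $e_k$'s) and then ``expect'' a sign-reversing involution to reconcile weak versus strict interlacing while keeping the powers of $t_n$ synchronized. As you yourself flag, that matching is the crux, so as written the proposal defers the decisive step rather than supplying it; the same gap recurs in the ``entirely parallel'' treatment of the second identity. (The identity is true and finite to verify in principle, e.g.\ by expanding the bialternant for $s_{\lambda-\rho}$ along the $x_n$-column and regrouping, but that computation, or an explicit involution, must actually be carried out.) A secondary imprecision: the states $\Ket{\mu}$ with $n-1$ positive parts do \emph{not} form a basis of the charge-$(n-1)$ sector; to legitimize the recursion you must also observe that the remaining basis states (those with the site $-\tfrac12$ vacant, or with holes deeper in the sea) either cannot appear in $e^{\phi_+(x_n;t_n)}\psi_{-1/2}\Ket{\lambda}$ or are killed by the next $\psi_{-1/2}$ in the product --- true, but it needs saying.

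For comparison, the paper never inserts intermediate states and so never needs the branching identity: it commutes all the operators $e^{\phi_+(x_i;t_i)}$ past the generating series $\psi^\ast(z_j)$ in one pass (Proposition~\ref{commutation}, Lemma~\ref{sumtheseries}), which produces a single alternating sum over $S_n$ of rational functions; the key input is then the closed-form evaluation of the polynomial $\hat{P}_n$ by antisymmetry and divisibility (Lemma~\ref{divislemma}), followed by Cauchy's determinant identity (\ref{detident}) and a single application of Pieri's rule. If you wish to keep your row-by-row induction --- which has the appeal of isolating a clean combinatorial statement --- what remains is to give a complete, self-contained proof of the displayed one-row identity (independent of Tokuyama's theorem, since the paper's purpose is a new proof of that result); without it, the argument is incomplete.
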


Notice that the left side of  the first expression can be thought of as $$\prod_{\lambda^{(0)}}\prod_{\lambda^{(1)}}\cdots\prod_{\lambda^{(n-1)}} \Braket{\lambda^{(i-1)}|e^{\phi_+(x_i;t_i)}\psi_{-\frac{1}{2}}|\lambda^{(i)}}$$ as the $\lambda^{(i)}$ vary over all partitions with $i$ parts (and the only partition with $0$ parts is $\emptyset$). Theorem \ref{th:computing.weight.of.one.step} tells us that the only sequences of partitions $\lambda^{(n-1)},\cdots,\lambda^{(1)}$ that appear in this product are ones which satisfy the interleaving condition from the theorem.  (Of course, the same holds true when considering the second expression.)

\section{Pictorial representations for free-fermions\label{pictorial}}

Before moving towards the proofs of our main results, we pause briefly to carry out a few illustrative computations.  For this, it will be useful to have a pictorial representation for Fermion states.  We represent the vacuum state $|\emptyset\rangle$ with a sea of particles (black dots) occupying each negative half-integer position:
$$ \begin{tikzpicture}
\node[shape=circle,draw,fill=black!50] (a) at (-5/2,0) {};
\node[shape=circle,draw,fill=black!50] (b) at (-3/2,0) {};
\node[shape=circle,draw,fill=black!50] (c) at (-1/2,0) {};
\node[shape=circle,draw] (d) at (1/2,0) {};
\node[shape=circle,draw] (e) at (3/2,0) {};
\node[shape=circle,draw] (f) at (5/2,0) {};

\node[] at (-5,0) {$\Ket{\emptyset} =$};

\node [label=below:$0$] at (0,0) {};
\node [label=below:$-1$] at (-1,0) {};
\node [label=below:$-2$] at (-2,0) {};
\node [label=below:$-3$] at (-3,0) {};
\node [label=left:$\cdots$] at (-3.1,0) {};

\node [label=below:$1$] at (1,0) {};
\node [label=below:$2$] at (2,0) {};
\node [label=below:$3$] at (3,0) {};
\node [label=right:$\cdots$] at (3.1,0) {};

\draw [-] (-3.25,0) -- (a) -- (b) -- (c) -- (d) -- (e) -- (f) -- (3.25,0);
\draw [-] (-3,.1) -- (-3,-.1);
\draw [-] (-2,.1) -- (-2,-.1);
\draw [-] (-1,.1) -- (-1,-.1);
\draw [-] (0,.1) -- (0,-.1);
\draw [-] (1,.1) -- (1,-.1);
\draw [-] (2,.1) -- (2,-.1);
\draw [-] (3,.1) -- (3,-.1);
\end{tikzpicture}
$$
Given any integer $\ell$, the state $\Ket{\ell}$ is depicted simply by
$$ \begin{tikzpicture}
\node[shape=circle,draw,fill=black!50] (a) at (-5/2,0) {};
\node[shape=circle,draw,fill=black!50] (b) at (-3/2,0) {};
\node[shape=circle,draw,fill=black!50] (c) at (-1/2,0) {};
\node[shape=circle,draw] (d) at (1/2,0) {};
\node[shape=circle,draw] (e) at (3/2,0) {};
\node[shape=circle,draw] (f) at (5/2,0) {};

\node[] at (-5,0) {$\Ket{\ell} =$};

\node [label=below:$\ell$] at (0,0) {};
\node [label=left:$\cdots$] at (-3.1,0) {};
\node [label=right:$\cdots$] at (3.1,0) {};

\draw [-] (-3.25,0) -- (a) -- (b) -- (c) -- (d) -- (e) -- (f) -- (3.25,0);
\draw [-] (-3,.1) -- (-3,-.1);
\draw [-] (-2,.1) -- (-2,-.1);
\draw [-] (-1,.1) -- (-1,-.1);
\draw [-] (0,.1) -- (0,-.1);
\draw [-] (1,.1) -- (1,-.1);
\draw [-] (2,.1) -- (2,-.1);
\draw [-] (3,.1) -- (3,-.1);
\end{tikzpicture}
$$
and for a strictly decreasing partition $\lambda$, one depicts $\Ket{\lambda}$ by taking the vacuum state and creating particles at the positions $\lambda_i-\frac{1}{2}$.  These diagrams are useful, but caution is required. The state $\Ket{\ell}$ refers to a precise order of creation or annihilation operators applied to the vacuum as in (\ref{ellstate}). If one wants to perform a creation or annihilation operation and express the result as a linear combination of $\Ket{\lambda}$, one must keep track of the associated signs from commutation relations. This is achieved by counting the number of particles to the right of the position where the creation or annihilation is taking place; the resulting sign is simply $-1$ to this power.

For the purposes of illustration, we examine the action of our shifted Hamiltonians on $\Ket{\lambda}$ with $\lambda = (5,3,2)$.  
Recall that we assigned variables corresponding to each non-zero part in $\lambda$ and factored the Hamiltonian as a product of operators $e^{\phi_+(x_i;t_i)}$ or $e^{\phi_-(x_i;t_i)}$ for each pair of variables $(x_i; t_i)$ as in Definition~\ref{superhamdef}. For the $e^{\phi_+(x_3; t_3)}$ operator, we compute \begin{equation}\label{eq:single.evolution}\sum_{k=0}^\infty \frac{1}{k!} \left(\sum_{q \geq 1} \frac{1}{q} (1-(-t_3)^{q})x_3^qJ_q\right)^k \psi_{-\frac{1}{2}}\psi^*_{5-\frac{1}{2}}\psi^*_{3-\frac{1}{2}}\psi^*_{2-\frac{1}{2}}\Ket{\emptyset}.\end{equation} The action of operator $J_q$ gives all ways of moving a single particle $q$ units to the left. So we must analyze which moves are non-zero on the state $\psi_{-\frac{1}{2}} \Ket{(5,3,2)}$:
$$ \begin{tikzpicture}
\node[shape=circle,draw,fill=black!50] (aa) at (-11/2,0) {};
\node[shape=circle,draw,fill=black!50] (bb) at (-9/2,0) {};
\node[shape=circle,draw] (cc) at (-7/2,0) {};
\node[shape=circle,draw] (a) at (-5/2,0) {};
\node[shape=circle,draw,fill=black!50] (b) at (-3/2,0) {};
\node[shape=circle,draw,fill=black!50] (c) at (-1/2,0) {};
\node[shape=circle,draw] (d) at (1/2,0) {};
\node[shape=circle,draw,fill=black!50] (e) at (3/2,0) {};
\node[shape=circle,draw] (f) at (5/2,0) {};
\node[shape=circle,draw] (g) at (7/2,0) {};

\node [label=below:] at (0,0) {};
\node [label=below:] at (-1,0) {};
\node [label=below:] at (-2,0) {};
\node [label=below:$0$] at (-3,0) {};
\node [label=below:] at (-4,0) {};
\node [label=below:] at (-5,0) {};
\node [label=below:] at (-6,0) {};
\node [label=left:$\cdots$] at (-6.1,0) {};

\node [label=below:] at (1,0) {};
\node [label=below:] at (2,0) {};
\node [label=below:] at (3,0) {};
\node [label=below:] at (4,0) {};
\node [label=right:$\cdots$] at (4.1,0) {};

\draw [-] (-6.25,0)--(aa)--(bb)--(cc)-- (a) -- (b) -- (c) -- (d) -- (e) -- (f) --(g) -- (4.25,0);
\draw [-] (-6,.1) -- (-6,-.1);
\draw [-] (-5,.1) -- (-5,-.1);
\draw [-] (-4,.1) -- (-4,-.1);
\draw [-] (-3,.1) -- (-3,-.1);
\draw [-] (-2,.1) -- (-2,-.1);
\draw [-] (-1,.1) -- (-1,-.1);
\draw [-] (0,.1) -- (0,-.1);
\draw [-] (1,.1) -- (1,-.1);
\draw [-] (2,.1) -- (2,-.1);
\draw [-] (3,.1) -- (3,-.1);
\draw [-] (4,.1) -- (4,-.1);
\end{tikzpicture}
$$
Our relations (\ref{pluscomm}) imply that creating a particle where one already exists annihilates the state. Thus in our example, the action by $J_q$ is 0 for any $q \geq 6$.  In fact, the following are the products of operators which yield nonzero states: $J_1$, $J_2$, $J_3$, $J_4$, $J_5$, $J_1J_1$, $J_1J_3$, $J_2J_1$, $J_2J_2$, $J_2J_3$, $J_2J_5$, $J_3J_1$, $J_3J_2$, $J_3J_4$, $J_4J_2$, $J_4J_3$, $J_5J1$, $J_5J_2$, $J_1J_1J_1$, $J_1J_1J_2$, $J_1J_1J_3$, $J_1J_1J_5$, $J_1J_2J_1$, $J_1J_2J_2$, $J_1J_2J_3$, $J_1J_2J_4$, $J_1J_3J_1$, $J_1J_3J_2$, $J_1J_3J_3$, $J_1J_4J_1$, $J_1J_4J_2$, $J_2J_1J_1$, $J_2J_1J_2$, $J_2J_1J_3$, $J_2J_1J_4$, $J_2J_2J_1$, $J_2J_2J_2$, $J_2J_2J_3$, $J_2J_3J_1$, $J_2J_3J_2$, $J_2J_4J_1$, $J_3J_1J_1$, $J_3J_1J_2$, $J_3J_1J_3$, $J_3J_2J_2$, $J_4J_1J_1$, $J_4J_1J_2$, $J_4J_2J_1$, $J_5J_1J_1$.  Obviously there are far too many here to compute explicitly, so we'll content ourselves with a slightly different question: what is the coefficient of $\Ket{(4,3)}$ in the ket appearing in equation (\ref{eq:single.evolution})?  That is, what is the value of 
$$\Braket{(4,3)|e^{\phi_+(x_3;t_3)}\psi_{-\frac{1}{2}}|(5,3,2)}?$$
Note that since the $J_q$ operators from $\phi_+$ only move particles to the left,  the particle which begins at position $5-\frac{1}{2}$ must move to $4-\frac{1}{2}$, the particle which begins at position $3-\frac{1}{2}$ must stay fixed, and the particle which begins at position $2-\frac{1}{2}$ must move to $-\frac{1}{2}$.  The possible migrations (coming from various summands in the operators $J_1J_1J_1$, $J_2J_1$ and $J_1J_2$) are given in Figure \ref{fig:sample.migrations}.

\begin{figure}[!ht]
\centering
\begin{subfigure}[!ht]{.48\textwidth}
\centering
\begin{tikzpicture}[auto,>=latex]
\node[shape=circle,draw] (cc1) at (-7/2,0) {};
\node[shape=circle,draw] (a1) at (-5/2,0) {};
\node[shape=circle,draw,fill=black!50] (b1) at (-3/2,0) {};
\node[shape=circle,draw,fill=black!50] (c1) at (-1/2,0) {};
\node[shape=circle,draw] (d1) at (1/2,0) {};
\node[shape=circle,draw,fill=black!50] (e1) at (3/2,0) {};

\node[shape=circle,draw] (cc2) at (-7/2,-1.5) {};
\node[shape=circle,draw] (a2) at (-5/2,-1.5) {};
\node[shape=circle,draw,fill=black!50] (b2) at (-3/2,-1.5) {};
\node[shape=circle,draw,fill=black!50] (c2) at (-1/2,-1.5) {};
\node[shape=circle,draw] (d2) at (1/2,-1.5) {};
\node[shape=circle,draw,fill=black!50] (e2) at (3/2,-1.5) {};

\node[shape=circle,draw] (cc3) at (-7/2,-3) {};
\node[shape=circle,draw] (a3) at (-5/2,-3) {};
\node[shape=circle,draw,fill=black!50] (b3) at (-3/2,-3) {};
\node[shape=circle,draw,fill=black!50] (c3) at (-1/2,-3) {};
\node[shape=circle,draw] (d3) at (1/2,-3) {};
\node[shape=circle,draw,fill=black!50] (e3) at (3/2,-3) {};

\node[shape=circle,draw] (cc4) at (-7/2,-4.5) {};
\node[shape=circle,draw] (a4) at (-5/2,-4.5) {};
\node[shape=circle,draw,fill=black!50] (b4) at (-3/2,-4.5) {};
\node[shape=circle,draw,fill=black!50] (c4) at (-1/2,-4.5) {};
\node[shape=circle,draw] (d4) at (1/2,-4.5) {};
\node[shape=circle,draw,fill=black!50] (e4) at (3/2,-4.5) {};

\node[shape=circle,draw] (cc5) at (-7/2,-6) {};
\node[shape=circle,draw] (a5) at (-5/2,-6) {};
\node[shape=circle,draw,fill=black!50] (b5) at (-3/2,-6) {};
\node[shape=circle,draw,fill=black!50] (c5) at (-1/2,-6) {};
\node[shape=circle,draw] (d5) at (1/2,-6) {};
\node[shape=circle,draw,fill=black!50] (e5) at (3/2,-6) {};

\foreach \j in {0,-1.5,...,-6}
	\node [label=below:$0$] at (-3,\j) {};


\draw [-] (-4.25,0)--(cc1)-- (a1) -- (b1) -- (c1) -- (d1) -- (e1) --  (2.25,0);
\draw [-] (-4.25,-1.5)--(cc2)-- (a2) -- (b2) -- (c2) -- (d2) -- (e2) --  (2.25,-1.5);
\draw [-] (-4.25,-3)--(cc3)-- (a3) -- (b3) -- (c3) -- (d3) -- (e3) --  (2.25,-3);
\draw [-] (-4.25,-4.5)--(cc4)-- (a4) -- (b4) -- (c4) -- (d4) -- (e4) --  (2.25,-4.5);
\draw [-] (-4.25,-6)--(cc5)-- (a5) -- (b5) -- (c5) -- (d5) -- (e5) -- (2.25,-6);

\foreach \j in {0,-1.5,...,-6}
	\foreach \i in {-4,-3,...,2}
		\draw [-] (\i,\j+.1) -- (\i,\j-.1);

\draw [->,dashed] (b1)   .. controls +(-1/4,2/4) and +(1/4,2/4) .. node[swap] {\tiny{1}} (a1);
\draw [->,dashed] (a1)   .. controls +(-1/4,2/4) and +(1/4,2/4) .. node[swap] {\tiny{2}} (cc1);
\draw [->,dashed] (e1)   .. controls +(-1/4,2/4) and +(1/4,2/4) .. node[swap] {\tiny{3}} (d1);

\draw [->,dashed] (b2)   .. controls +(-1/4,2/4) and +(1/4,2/4) .. node[swap] {\tiny{1}} (a2);
\draw [->,dashed] (a2)   .. controls +(-1/4,2/4) and +(1/4,2/4) .. node[swap] {\tiny{3}} (cc2);
\draw [->,dashed] (e2)   .. controls +(-1/4,2/4) and +(1/4,2/4) .. node[swap] {\tiny{2}} (d2);

\draw [->,dashed] (b3)   .. controls +(-1/4,2/4) and +(1/4,2/4) .. node[swap] {\tiny{2}} (a3);
\draw [->,dashed] (a3)   .. controls +(-1/4,2/4) and +(1/4,2/4) .. node[swap] {\tiny{3}} (cc3);
\draw [->,dashed] (e3)   .. controls +(-1/4,2/4) and +(1/4,2/4) .. node[swap] {\tiny{1}} (d3);

\draw [->,dashed] (b4)   .. controls +(-1/4,2/4) and +(1/4,2/4) .. node[swap] {\tiny{1}} (cc4);
\draw [->,dashed] (e4)   .. controls +(-1/4,2/4) and +(1/4,2/4) .. node[swap] {\tiny{2}} (d4);

\draw [->,dashed] (b5)   .. controls +(-1/4,2/4) and +(1/4,2/4) .. node[swap] {\tiny{2}} (cc5);
\draw [->,dashed] (e5)   .. controls +(-1/4,2/4) and +(1/4,2/4) .. node[swap] {\tiny{1}} (d5);
\end{tikzpicture}
\end{subfigure}
\begin{subfigure}[h]{.48\textwidth}
  \centering
\begin{tikzpicture}[auto,>=latex]
\node[shape=circle,draw] (a1) at (-5/2,0) {};
\node[shape=circle,draw,fill=black!50] (b1) at (-3/2,0) {};
\node[shape=circle,draw,fill=black!50] (c1) at (-1/2,0) {};
\node[shape=circle,draw] (d1) at (1/2,0) {};
\node[shape=circle,draw,fill=black!50] (e1) at (3/2,0) {};
\node[shape=circle,draw] (f1) at (5/2,0) {};

\node[shape=circle,draw] (a2) at (-5/2,-1.5) {};
\node[shape=circle,draw,fill=black!50] (b2) at (-3/2,-1.5) {};
\node[shape=circle,draw,fill=black!50] (c2) at (-1/2,-1.5) {};
\node[shape=circle,draw] (d2) at (1/2,-1.5) {};
\node[shape=circle,draw,fill=black!50] (e2) at (3/2,-1.5) {};
\node[shape=circle,draw] (f2) at (5/2,-1.5) {};

\node[shape=circle,draw] (a3) at (-5/2,-3) {};
\node[shape=circle,draw,fill=black!50] (b3) at (-3/2,-3) {};
\node[shape=circle,draw,fill=black!50] (c3) at (-1/2,-3) {};
\node[shape=circle,draw] (d3) at (1/2,-3) {};
\node[shape=circle,draw,fill=black!50] (e3) at (3/2,-3) {};
\node[shape=circle,draw] (f3) at (5/2,-3) {};

\node[shape=circle,draw] (a4) at (-5/2,-4.5) {};
\node[shape=circle,draw,fill=black!50] (b4) at (-3/2,-4.5) {};
\node[shape=circle,draw,fill=black!50] (c4) at (-1/2,-4.5) {};
\node[shape=circle,draw] (d4) at (1/2,-4.5) {};
\node[shape=circle,draw,fill=black!50] (e4) at (3/2,-4.5) {};
\node[shape=circle,draw] (f4) at (5/2,-4.5) {};

\node[shape=circle,draw] (a5) at (-5/2,-6) {};
\node[shape=circle,draw,fill=black!50] (b5) at (-3/2,-6) {};
\node[shape=circle,draw,fill=black!50] (c5) at (-1/2,-6) {};
\node[shape=circle,draw] (d5) at (1/2,-6) {};
\node[shape=circle,draw,fill=black!50] (e5) at (3/2,-6) {};
\node[shape=circle,draw] (f5) at (5/2,-6) {};

\foreach \j in {0,-1.5,...,-6}
	\node [label=below:$0$] at (-3,\j) {};


\draw [-] (-3.25,0)--(a1) -- (b1) -- (c1) -- (d1) -- (e1) -- (f1) -- (3.25,0);
\draw [-] (-3.25,-1.5)-- (a2) -- (b2) -- (c2) -- (d2) -- (e2) -- (f2) -- (3.25,-1.5);
\draw [-] (-3.25,-3)-- (a3) -- (b3) -- (c3) -- (d3) -- (e3) -- (f3) -- (3.25,-3);
\draw [-] (-3.25,-4.5)-- (a4) -- (b4) -- (c4) -- (d4) -- (e4) -- (f4) -- (3.25,-4.5);
\draw [-] (-3.25,-6)-- (a5) -- (b5) -- (c5) -- (d5) -- (e5) -- (f5) -- (3.25,-6);

\foreach \j in {0,-1.5,...,-6}
	\foreach \i in {-3,-2,...,3}
		\draw [-] (\i,\j+.1) -- (\i,\j-.1);

\draw [->,dashed] (b1)   .. controls +(1/4,2/4) and +(-1/4,2/4) .. node {\tiny{3}} (c1);
\draw [->,dashed] (c1)   .. controls +(1/4,2/4) and +(-1/4,2/4) .. node {\tiny{2}} (d1);
\draw [->,dashed] (e1)   .. controls +(1/4,2/4) and +(-1/4,2/4) .. node {\tiny{1}} (f1);

\draw [->,dashed] (b2)   .. controls +(1/4,2/4) and +(-1/4,2/4) .. node {\tiny{2}} (c2);
\draw [->,dashed] (c2)   .. controls +(1/4,2/4) and +(-1/4,2/4) .. node {\tiny{1}} (d2);
\draw [->,dashed] (e2)   .. controls +(1/4,2/4) and +(-1/4,2/4) .. node {\tiny{3}} (f2);

\draw [->,dashed] (b3)   .. controls +(1/4,2/4) and +(-1/4,2/4) .. node {\tiny{3}} (c3);
\draw [->,dashed] (c3)   .. controls +(1/4,2/4) and +(-1/4,2/4) .. node {\tiny{1}} (d3);
\draw [->,dashed] (e3)   .. controls +(1/4,2/4) and +(-1/4,2/4) .. node {\tiny{2}} (f3);

\draw [->,dashed] (b4)   .. controls +(1/4,2/4) and +(-1/4,2/4) .. node {\tiny{1}} (d4);
\draw [->,dashed] (e4)   .. controls +(1/4,2/4) and +(-1/4,2/4) .. node {\tiny{2}} (f4);

\draw [->,dashed] (b5)   .. controls +(1/4,2/4) and +(-1/4,2/4) .. node {\tiny{2}} (d5);
\draw [->,dashed] (e5)   .. controls +(1/4,2/4) and +(-1/4,2/4) .. node {\tiny{1}} (f5);
\end{tikzpicture}
\end{subfigure}
\caption{Leftward migrations from $\psi_{-\frac{1}{2}}(5,3,2)$ to $(4,3)$, and rightward migrations from $(5,3,2)$ to $(6,4,3)$.  Arrow labels indicate the order of movement.}\label{fig:sample.migrations}
\end{figure}
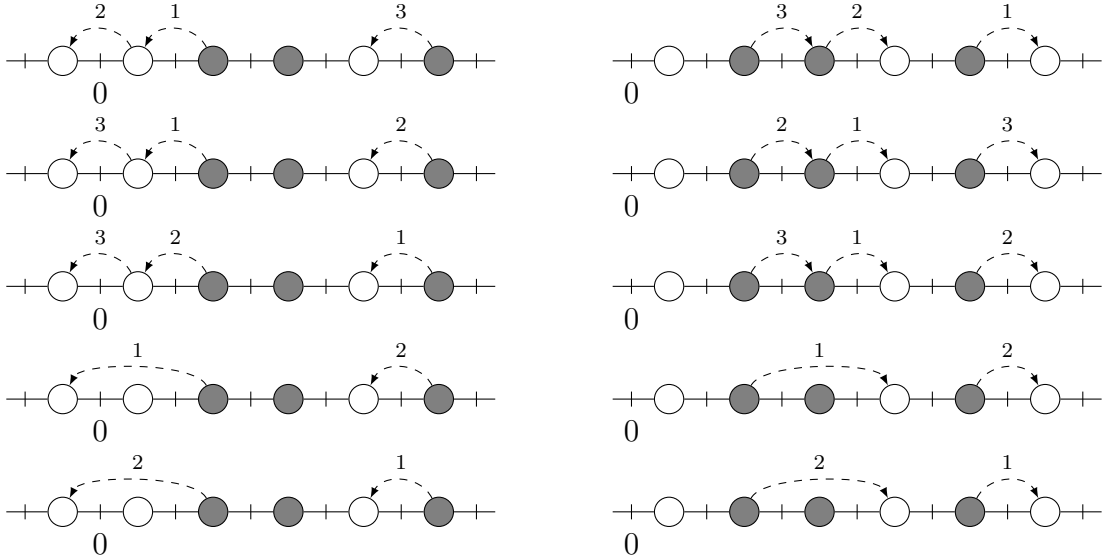

Analyzing the contribution from each of these and summing yields a coefficient of 
$$-3\frac{1}{3!}(1+t_3)^3x_3^3 - 2\frac{1}{2!}\frac{1}{2}(1-t_3^2)(1+t_3)x_3^3 = -x_3^3(1+t_3)^2.$$

Note that for the $e^{\phi_-}$ Hamiltonian, the action of the associated $J_{q}$ on $\Ket{\lambda}$ moves particles to the right.  Hence analyzing all possible rightward motions is even more intractable, since there is a sea of antiparticles on the right which allows for particles to move as far as they like.  We will compute $$\Braket{(4,3)|\psi_{6-\frac{1}{2}} e^{\phi_-(x_1;t_1)}|(5,3,2)}.$$  Since our operator finishes by annihilating at position $6-\frac{1}{2}$, it suffices for us to determine those rightward migrations that carry $\Ket{(5,3,2)}$ to $\Ket{(6,4,3)}$.  The possible migrations are given in Figure \ref{fig:sample.migrations}.  Analyzing the contribution from each yields
$$3\frac{1}{3!}(1+t_3)^3x_1^{-3} - 2\frac{1}{2!}\frac{1}{2}(1-t_1^2)(1+t_1)x_1^{-3} = x_1^{-3}(1+t_1)^2t_1.$$

While none of our results rely on this pictorial representation of fermions and their images under Hamiltonian operators, we find it useful to see directly how the Hamiltonians are behaving and to have tools for explicit computation of special cases of the results above.

\section{A connection to ice models}

The value of the bra-ket from Theorem \ref{th:computing.weight.of.one.step} has previously appeared in a generating function identity of Tokuyama~\cite{tokuyama} phrased in the language of ``strict'' Gelfand-Tsetlin patterns.  A Gelfand-Tsetlin pattern (of size $n$) is a sequence of partitions $\lambda^{(i)} = (a_{i,1} \geq \cdots \geq a_{i,i} \geq 0)$ (for $1 \leq i \leq n$) that satisfy the interleaving condition $$a_{i+1,j} \geq a_{i,j} \geq a_{i+1,j+1}.$$  Such a pattern is called strict if each $\lambda^{(i)}$ is strictly decreasing.  A Gelfand-Tsetlin pattern is often depicted as a triangular array
$$\begin{array}{ccccccccc}
a_{n,1} &            & a_{n,2} & \cdots  & a_{n,n-1} &              & a_{n,n} \\[5pt]
        &  a_{n-1,1} &         & \cdots  &           &  a_{n-1,n-1} &         \\
        &            & \ddots  &         & \iddots   &              &         \\
        &            &         & a_{1,1} &           &              &
\end{array}
$$
and so the $\lambda^{(i)}$ are often called the rows of the pattern. The set of all Gelfand-Tsetlin patterns with fixed top row arise naturally in representation theory, as they parametrize bases for highest weight representations of $GL_n(\mathbb{C})$ according to the multiplicity-free branching rules from $GL_n$ to $GL_{n-1}$.

Strict Gelfand-Tsetlin patterns are perhaps less natural, but it was noticed by Hamel and King \cite{hamel-king} that they are in bijection with admissible states
of certain two-dimensional lattice models (or ``ice'' models). This bijection was exploited in \cite{bbf-ice} to give a proof of Tokuyama's formula using the Yang-Baxter equation.
We now describe this bijection more precisely. Consider the set of balanced directed graphs with $\lambda_1$-columns (labeled from left to right as $\lambda_1$ through $1$) and $n$-rows (labeled from top to bottom as $n$ through $1$), subject to the  following boundary conditions: inward pointing boundary edges along each row, downward pointing boundary edges along the bottom boundary, and upward pointing edges at the top boundary of a column if and only if the column index corresponds to a part of $\lambda$.  Such a graph is called an admissible state of ice, and we write $\mathfrak{A}^\lambda$ for the set of all such admissible states. To create a state of ice from a pattern $\lambda^{(n)},\cdots,\lambda^{(1)}$, one labels the northern edge of the vertex at $(i,j)$ upward if and only if $j$ is a part of $\lambda^{(i)}$; this determines the direction along any vertical edge, and the horizontal edges are filled in inductively so as to produce a state of $\mathfrak{A}^\lambda$. For example, under this bijection we make the following identification:

\begin{equation} \label{icegtbijection} \begin{array}{ccc}
     \vcenter{\hbox to 2in{
     \begin{tikzpicture}[scale=.8, every node/.style={transform shape}]

\node [label=left:$3$] at (0,2) {};
\node [label=left:$2$] at (0,1) {};
\node [label=left:$1$] at (0,0) {};

\node [label=above:$5$] at (0.5,2.5) {};
\node [label=above:$4$] at (1.5,2.5) {};
\node [label=above:$3$] at (2.5,2.5) {};
\node [label=above:$2$] at (3.5,2.5) {};
\node [label=above:$1$] at (4.5,2.5) {};

\draw [>-] (0,2) -- (1,2);
\draw [>-] (0,1) -- (1,1);
\draw [>-] (0,0) -- (1,0);

\draw [<-] (0.5,2.5) -- (0.5,1.5);
\draw [>->] (0.5,1.5) -- (0.5,.5);
\draw [->] (0.5,0.5) -- (0.5,-0.5);

\draw [<-] (1,2) -- (2,2);
\draw [>-] (1,1) -- (2,1);
\draw [>-] (1,0) -- (2,0);

\draw [>-] (1.5,2.5) -- (1.5,1.5);
\draw [<->] (1.5,1.5) -- (1.5,.5);
\draw [->] (1.5,0.5) -- (1.5,-0.5);

\draw [>-] (2,2) -- (3,2);
\draw [<-] (2,1) -- (3,1);
\draw [>-] (2,0) -- (3,0);

\draw [<-] (2.5,2.5) -- (2.5,1.5);
\draw [<-<] (2.5,1.5) -- (2.5,.5);
\draw [->] (2.5,0.5) -- (2.5,-0.5);

\draw [>-] (3,2) -- (4,2);
\draw [<-] (3,1) -- (4,1);
\draw [<-] (3,0) -- (4,0);

\draw [<-] (3.5,2.5) -- (3.5,1.5);
\draw [>->] (3.5,1.5) -- (3.5,.5);
\draw [->] (3.5,0.5) -- (3.5,-0.5);

\draw [<-<] (4,2) -- (5,2);
\draw [<-<] (4,1) -- (5,1);
\draw [<-<] (4,0) -- (5,0);

\draw [>-] (4.5,2.5) -- (4.5,1.5);
\draw [>->] (4.5,1.5) -- (4.5,.5);
\draw [->] (4.5,0.5) -- (4.5,-0.5);

\end{tikzpicture} }} & \longleftrightarrow & \begin{array}{ccccc} 5 & & 3 & & 2 \\ & 4 & & 3 & \\ & & 3 & &  \end{array}  \end{array} \end{equation}

One can associate a weight to a given state of ice as follows.  For each of the six possible vertex arrangements, one assigns a local weight. The assignment of the local weight is according to the six possible configurations of adjacent edges to the vertex, and is further allowed to depend on the row in which the vertex sits. For a state $A \in \mathfrak{A}^\lambda$, we assign $\text{wt}(A)$ to be the product of the local weights at all of the vertices in $A$.  The so-called partition function for $\mathfrak{A}^\lambda$ (relative to the given choice of weights) is then $$\mathcal{Z}(\mathfrak{A}^\lambda) = \sum_{A \in \mathfrak{A}^\lambda} \text{wt}(A).$$

In \cite{bbf-ice}, two weight assignments -- denoted $\Delta$ and $\Gamma$ -- were defined, and shown to satisfy Yang-Baxter equations and have partition functions expressible as Schur polynomials multiplied by a deformed denominator. In particular, the weight assignment $\Gamma$ gives Boltzmann weights for ice that correspond bijectively to those in Tokuyama's generating function over strict patterns (using the above bijection, but with rows labeled in ascending order). The $\Delta$ weights correspond bijectively to a generating function over strict patterns that is --- in some sense --- dual to Tokuyama's: it replaces an exponent from Tokuyama's original formula expressed in terms of ``left-leaning" entries with one in terms of ``right-leaning" entries, and also lists rows in descending order. Both schemes are addressed in \cite{bbf-ice}. The two weight assignments $\Gamma$ and $\Delta$ have been reproduced in Figure \ref{fig:six.vertex.model}; the spectral parameters have been set up to match the row-numbering conventions for the two weight schemes (relative to our convention of numbering rows in descending order).

\begin{figure}[!ht]
\centering
\begin{tabular}
{|c|c|c|c|c|c|c|}
\hline
&
$\begin{tikzpicture}[>=angle 90,shorten <=-1.5pt,shorten >=-1.5pt]
\node [label=left:$j$] at (0.5,1) {};
\node [label=right:$ $] at (1.5,1) {};
\node [label=above:$ $] at (1,1.5) {};
\node [label=below:$ $] at (1,0.5) {};
\draw [>->] (0.5,1) -- (1.5,1);
\draw [>->] (1,1.5) -- (1,0.5);
\end{tikzpicture}$
&
$\begin{tikzpicture}[>=angle 90,shorten <=-1.5pt,shorten >=-1.5pt]
\node [label=left:$j$] at (0.5,1) {};
\node [label=right:$ $] at (1.5,1) {};
\node [label=above:$ $] at (1,1.5) {};
\node [label=below:$ $] at (1,0.5) {};
\draw [<-<] (0.5,1) -- (1.5,1);
\draw [<-<] (1,1.5) -- (1,0.5);
\end{tikzpicture}$
&
$\begin{tikzpicture}[>=angle 90,shorten <=-1.5pt,shorten >=-1.5pt]
\node [label=left:$j$] at (0.5,1) {};
\node [label=right:$ $] at (1.5,1) {};
\node [label=above:$ $] at (1,1.5) {};
\node [label=below:$ $] at (1,0.5) {};
\draw [>->] (0.5,1) -- (1.5,1);
\draw [<-<] (1,1.5) -- (1,0.5);
\end{tikzpicture}$
&
$\begin{tikzpicture}[>=angle 90,shorten <=-1.5pt,shorten >=-1.5pt]
\node [label=left:$j$] at (0.5,1) {};
\node [label=right:$ $] at (1.5,1) {};
\node [label=above:$ $] at (1,1.5) {};
\node [label=below:$ $] at (1,0.5) {};
\draw [<-<] (0.5,1) -- (1.5,1);
\draw [>->] (1,1.5) -- (1,0.5);
\end{tikzpicture}$
&
$\begin{tikzpicture}[>=angle 90,shorten <=-1.5pt,shorten >=-1.5pt]
\node [label=left:$j$] at (0.5,1) {};
\node [label=right:$ $] at (1.5,1) {};
\node [label=above:$ $] at (1,1.5) {};
\node [label=below:$ $] at (1,0.5) {};
\draw [<->] (0.5,1) -- (1.5,1);
\draw [>-<] (1,1.5) -- (1,0.5);
\end{tikzpicture}$
&
$\begin{tikzpicture}[>=angle 90,shorten <=-1.5pt,shorten >=-1.5pt]
\node [label=left:$j$] at (0.5,1) {};
\node [label=right:$ $] at (1.5,1) {};
\node [label=above:$ $] at (1,1.5) {};
\node [label=below:$ $] at (1,0.5) {};
\draw [>-<] (0.5,1) -- (1.5,1);
\draw [<->] (1,1.5) -- (1,0.5);
\end{tikzpicture}$
\\
\hline
$\Delta$ & $1$&$t_jx_j$&$1$&$x_j$&$x_j(t_j+1)$&$1$\\ \hline
$\Gamma$ & $1$&$x_{n-j+1}$&$t_{n-j+1}$&$x_{n-j+1}$&$x_{n-j+1}(t_{n-j+1}+1)$&$1$\\ \hline
\end{tabular}
\caption{$\Delta$ and $\Gamma$ weighting schemes from \cite{bbf-ice}. The index $j$ denotes the row number in which the
vertex appears.}\label{fig:six.vertex.model}
\end{figure}

We will write $\mathcal{Z}_\Delta(\mathfrak{A}^\lambda)$ for the partition function associated to the $\Delta$-weighting scheme, and likewise for $\Gamma$.

The following result thus records that our Hamiltonians $\prod_{i=1}^n \psi_{\lambda_1+\frac{1}{2}}e^{\phi_-(x_{n-i+1};t_{n-1+1})}$ and $\prod_{i=1}^n e^{\phi_+(x_i;t_i)}\psi_{-\frac{1}{2}}$ from Definition~\ref{superhamdef} and Theorem \ref{th:computing.weight.of.one.step} may be used to recover (up to a simple factor) the row-by-row statistics of Tokuyama's generating function and its dual on patterns; equivalently, our Hamiltonians capture the row-by-row contribution to the Boltzmann weight of ice with weight schemes $\Gamma$ and $\Delta$ of \cite{bbf-ice}.
Recall that, as above, we defined $\lambda^{(j)}$ to be the partition that records the columns along row $j$ whose northern edge is decorated with an ``up" arrow (and with $\lambda^{(0)} = \emptyset$).

\begin{corollary}\label{cor:rectangular.ice.and.fermions}
Let $A$ be a state of ice which corresponds to a pattern $\mathcal{T}$ with top row $\lambda := \lambda^{(n)} = (\lambda_1>\cdots>\lambda_n)$, and denote the $\Delta$-weight of the $i$th row of $A$ by $\text{wt}_{\Delta}(A;i)$ and the $\Gamma$-weight of the $i$th row of $A$ by $\text{wt}_\Gamma(A;i)$.  Then
\begin{align*}
(-1)^i(t_i+1)x_i\cdot\text{wt}_\Delta(A;i) &= \Braket{ \lambda^{(i-1)}|e^{\phi_+(x_i;t_i)}\psi_{-\frac{1}{2}}|\lambda^{(i)}}\\
(t_{n-i+1}+1)x_{n-i+1}^{-\lambda_1}\cdot\text{wt}_\Gamma(A;i) &= \Braket{ \lambda^{(i-1)}|\psi_{\lambda_1+\frac{1}{2}}e^{\phi_+(x_{n-i+1};t_{n-i+1})}|\lambda^{(i)}}.
\end{align*}
In particular,
\begin{align*}
\left(\prod_{i=1}^n (-1)^i (t_i+1)x_i\right)\text{wt}_\Delta(A) &= \prod_{i=1}^n\Braket{\lambda^{(i-1)}|e^{\phi_+(x_i;t_i)}\psi_{-\frac{1}{2}}|\lambda^{(i)}}\\
\left(\prod_{i=1}^n (t_{n-i+1}+1)x_{n-i+1}^{-\lambda_1}\right)\text{wt}_\Gamma(A) &= \prod_{i=1}^n\Braket{\lambda^{(i-1)}|e^{\phi_+(x_{n-i+1};t_{n-i+1})}\psi_{-\frac{1}{2}}|\lambda^{(i)}}.
\end{align*}
\end{corollary}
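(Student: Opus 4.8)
The plan is to read off Corollary~\ref{cor:rectangular.ice.and.fermions} from Theorem~\ref{th:computing.weight.of.one.step} by translating, row by row, between the fermionic bra-kets and the local weight tables of Figure~\ref{fig:six.vertex.model} via the Hamel--King bijection \eqref{icegtbijection}. First I would fix the dictionary for a single row: under the bijection the vertices in row~$i$ of $A$ have northern edges prescribed by $\lambda^{(i)}$ and southern edges by $\lambda^{(i-1)}$, so column~$j$ carries an ``up'' northern (resp.\ southern) edge exactly when $j$ is a part of $\lambda^{(i)}$ (resp.\ $\lambda^{(i-1)}$). In the bra-kets of Theorem~\ref{th:computing.weight.of.one.step} the ket is the input and the bra the output, so row~$i$ corresponds to the choice $\lambda = \lambda^{(i)}$ (with $i$ parts) and $\mu = \lambda^{(i-1)}$ (with $i-1$ parts); this is why the exponent $n$ of that theorem specializes to $i$, and in particular $(-1)^n$ to $(-1)^i$. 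Moreover, admissibility of $A$ along row~$i$ is precisely the interleaving $\lambda^{(i)}_j \ge \lambda^{(i-1)}_j \ge \lambda^{(i)}_{j+1}$, which is exactly the hypothesis under which those bra-kets are nonzero; so the two ``otherwise $=0$'' clauses match and only the interleaving case remains.

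Granting the interleaving, I would carry out the elementary classification of the six vertex types occurring in row~$i$. The pair of Booleans ``$j\in\lambda^{(i)}$'' and ``$j\in\lambda^{(i-1)}$'' fixes the two vertical edges at column~$j$, hence the vertex type up to the direction of its horizontal edges; that direction is then determined by the usual flux argument (sweeping in from the right boundary, using the two-in/two-out condition), whose conservation law reads off the partial sums of $\lambda^{(i)}$ and $\lambda^{(i-1)}$. Running this bookkeeping matches the vertex counts with Tokuyama's row statistics: the columns where $\lambda^{(i-1)}_k$ equals the entry above and to its left occur $l(\lambda^{(i)};\lambda^{(i-1)})$ times, those equal to the entry above and to its right occur $r(\lambda^{(i)};\lambda^{(i-1)})$ times, the ``special'' columns ($\lambda^{(i)}_k > \lambda^{(i-1)}_k > \lambda^{(i)}_{k+1}$) occur $s(\lambda^{(i)};\lambda^{(i-1)}) - r(\lambda^{(i)};\lambda^{(i-1)})$ times, and the number of columns contributing a factor of the row's spectral parameter is a telescoping count equal to $|\lambda^{(i)}| - |\lambda^{(i-1)}|$ up to an explicit constant. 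Taking the product of the $\Delta$-weights of Figure~\ref{fig:six.vertex.model} over the row then yields $\text{wt}_\Delta(A;i) = t_i^{\,r}(1+t_i)^{\,s-r}x_i^{\,|\lambda^{(i)}|-|\lambda^{(i-1)}|-1}$ (writing $r,s$ for the statistics just named), and comparing with the first displayed formula of Theorem~\ref{th:computing.weight.of.one.step} leaves exactly the factor $(-1)^i(t_i+1)x_i$, which is the first per-row identity.

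For the $\Gamma$ identity I would run the same argument against the second displayed formula of Theorem~\ref{th:computing.weight.of.one.step}: the $\Gamma$-scheme uses the spectral parameter of row $n-j+1$ in place of row~$j$, so one substitutes $x = x_{n-i+1}$, $t = t_{n-i+1}$; the operator now ends with the annihilation $\psi_{\lambda_1+\frac12}$, i.e.\ one invokes the theorem with shift index $k = \lambda_1+1 > \lambda_1$, and this is exactly what trades the $t^{r}$ statistic for the $t^{l}$ statistic that $\Gamma$-ice records. Tracking the powers of $x_{n-i+1}$ through the row (against the theorem's $x^{|\lambda|-|\mu|-k}$) then produces the prefactor $(t_{n-i+1}+1)x_{n-i+1}^{-\lambda_1}$. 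The two ``in particular'' statements follow at once, since $\text{wt}_\Delta(A) = \prod_{i=1}^n \text{wt}_\Delta(A;i)$ and $\text{wt}_\Gamma(A) = \prod_{i=1}^n \text{wt}_\Gamma(A;i)$ by definition of the weight of a state, so one just multiplies the per-row identities over $i=1,\dots,n$. The main obstacle is the bookkeeping in the middle step: correctly matching each of the six configurations of Figure~\ref{fig:six.vertex.model} to a local pattern of the interlacing data --- noting that the same Boolean pair can yield two different vertices, separated only by the flux --- and then checking that the products of local weights, once multiplied by the correction prefactor, reproduce the right-hand sides of Theorem~\ref{th:computing.weight.of.one.step}. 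This requires care with the arrow-orientation conventions, the descending row-numbering, and the two spectral-parameter conventions for $\Delta$ and $\Gamma$.
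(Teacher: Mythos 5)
Your proposal is correct, and its overall shape is the same as the paper's: specialize Theorem~\ref{th:computing.weight.of.one.step} to $\lambda=\lambda^{(i)}$, $\mu=\lambda^{(i-1)}$ (so $n\mapsto i$, giving the $(-1)^i$, and $k=\lambda_1+1$ for the $\Gamma$ case), and compare the resulting values $(-1)^i t_i^{r}(1+t_i)^{s-r+1}x_i^{|\lambda^{(i)}|-|\lambda^{(i-1)}|}$ and $t^{l}(1+t)^{s-r+1}x^{|\lambda^{(i)}|-|\lambda^{(i-1)}|-\lambda_1-1}$ with the row weights, which is exactly how the stated prefactors arise. The one genuine difference is the middle step: the paper does not re-derive the identification of the row-by-row Boltzmann weights with the Tokuyama-type statistics $r$, $l$, $s-r$ and the exponent of the spectral parameter --- it cites Theorem~6 of \cite{bbf-ice} for this (and only comments on the column-indexing offset responsible for the extra $x_i^{-\lambda_1}$ versus $x_i^{-(\lambda_1+1)}$), whereas you propose to verify directly from Figure~\ref{fig:six.vertex.model} that the product of local weights over row $i$ equals $t_i^{r}(1+t_i)^{s-r}x_i^{|\lambda^{(i)}|-|\lambda^{(i-1)}|-1}$ in the $\Delta$ scheme and $t^{l}(1+t)^{s-r}x^{|\lambda^{(i)}|-|\lambda^{(i-1)}|-1}$ in the $\Gamma$ scheme. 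Those target formulas are the right ones (they check against the worked example with $\lambda^{(i)}=(5,3,2)$, $\lambda^{(i-1)}=(4,3)$, giving $x^2(t+1)$ and $x^2(t+1)t$ respectively), so your route buys a self-contained argument at the cost of the vertex-classification bookkeeping, which you only sketch --- but that bookkeeping is precisely the content the paper outsources to \cite{bbf-ice}, so nothing essential is missing. One small point worth flagging either way: the displayed $\Gamma$ identities in the corollary statement should involve $\psi_{\lambda_1+\frac12}e^{\phi_-}$ (as in your reading and in the paper's own proof text), not $e^{\phi_+}\,$; you handled this correctly.
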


\begin{proof}
The $\Gamma$ weights in \cite{bbf-ice} match up with Tokuyama's weighting schemes for patterns on a row-by-row basis (see \cite[Th.~6]{bbf-ice}).  Now we simply observe that Theorem \ref{th:computing.weight.of.one.step} gives the value of $\Braket{\mu|\psi_{\lambda_1+\frac{1}{2}}e^{\phi_-(x_{n-i+1};t_{n-i+1})}|\lambda}$ as this same weight, up to the given factor.  (It's useful to note that since Tokyuama and \cite{bbf-ice} list their columns starting at $0$ instead of $1$, the quantity $|\lambda^{(i)}|-|\lambda^{(i-1)}|$ that we calculate is larger than the related quantity calculated by Tokuyama; hence we are left with the additional factor of $x_i^{-(\lambda_1+1)+1}$, as opposed to $x_i^{-(\lambda_1+1)}$.)   A similar result holds for the $\Delta$ weights and the $e^{\phi_+(x;t)}\psi_{-\frac{1}{2}}$ operator.
\end{proof}

\begin{example*}
Consider the row of ice corresponding to $(5,3,2), (4,3)$; this is shown in Figure \ref{fig:sample.ice}.  The $\Delta$-weight of the row is $x_3^2(t_3+1)$ and the corresponding bra-ket involving $\lambda^{(3)} = (5,3,2)$ and $\lambda^{(2-1)} = (4,3)$ was calculated at the end of Section~\ref{pictorial}, resulting in $-x_3^3(1+t_3)^2$. The reader may similarly verify that  its $\Gamma$-weight (where we take $n=3$, assuming that $(5,3,2)$ is the ``top" of the ice) is $x_1^2(t_1+1)t_1$, and the corresponding bra-ket calculated in Section~\ref{pictorial} is $x_1^{-3}(1+t_1)^2t_1$ agreeing with the second case of the above Corollary.  

\begin{figure}[!ht]
\begin{tikzpicture}[>=angle 90,shorten <=-1.5pt,shorten >=-1.5pt]

\node [label=left:$3$] at (0,2) {};

\node [label=above:$5$] at (0.5,2.5) {};
\node [label=above:$4$] at (1.5,2.5) {};
\node [label=above:$3$] at (2.5,2.5) {};
\node [label=above:$2$] at (3.5,2.5) {};
\node [label=above:$1$] at (4.5,2.5) {};

\draw [>-] (0,2) -- (1,2);

\draw [<->] (0.5,2.5) -- (0.5,1.5);

\draw [<-] (1,2) -- (2,2);

\draw [>-<] (1.5,2.5) -- (1.5,1.5);

\draw [>-] (2,2) -- (3,2);

\draw [<-<] (2.5,2.5) -- (2.5,1.5);

\draw [>-] (3,2) -- (4,2);

\draw [<->] (3.5,2.5) -- (3.5,1.5);

\draw [<-<] (4,2) -- (5,2);

\draw [>->] (4.5,2.5) -- (4.5,1.5);

\end{tikzpicture}
\caption{The row of ice corresponding to $(5,3,2), (4,3)$}
\label{fig:sample.ice}
\end{figure}
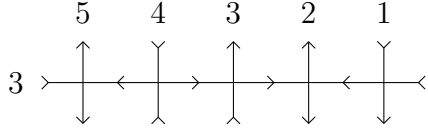
\end{example*}


Note that since all states have the same bottom-boundary (all arrows pointed down), one can interpret this as saying that the ``final row" of any Gelfand-Tsetlin pattern is the empty set.  This agrees with our previous notational choice of $\lambda^{(0)} = \emptyset$.  From our corollary, it therefore follows that if $\lambda$ is a partition with strictly decreasing parts, then the partition function $\mathcal{Z}_\Delta(\mathfrak{A}^\lambda) = \sum_A \text{wt}_\Delta(A)$ satisfies
$$\left(\prod_{i=1}^n (-1)^i (t_i+1)x_i\right) \mathcal{Z}_\Delta(\mathfrak{A}^\lambda) = \Braket{ \emptyset| \prod_{i=1}^n e^{\phi_+(x_i;t_i)}\psi_{-\frac{1}{2}} |\lambda}$$ and likewise for $\mathcal{Z}_\Gamma(\mathfrak{A}^\lambda)$. The left-hand side was already evaluated in \cite[Thm.~9]{bbf-ice}, and we could use that result to give a proof of Theorem \ref{th:factorization.of.partition.function} at this point.  In the next section, we will instead give a different proof of the evaluation of the right-hand side above using commutation relations for Hamiltonians.

\begin{remark*} The weighting schemes $\Gamma$ and $\Delta$ are just special cases of Boltzmann weights satisfying a quadratic relation
known as the free-fermion point of the six-vertex model. In \cite{bbf-ice}, it is shown that all partition functions using these weights are expressible as
Schur functions. However, the weights $\Gamma$ and $\Delta$ are the only choices which seem to be expressible in terms of Hamiltonians.
\end{remark*}

\section{Proofs of Theorems~\ref{th:computing.weight.of.one.step} and \ref{th:factorization.of.partition.function}\label{proofsofthms}}

In order to prove Theorems \ref{th:computing.weight.of.one.step} and \ref{th:factorization.of.partition.function}, we first determine the relation between the Hamiltonian action $e^{H[s(t)]}$ and the creation and annihilation operators.

\begin{proposition}\label{commutation} Let $H[s(t)]$ be given by
$$ H[s(t)] := \sum_{q \geq 1} s_q(t) J_q. $$
Then
\begin{align*} e^{H[s(t)]} \psi(z) e^{-H[s(t)]} &= e^{-\sum_{q \geq 1} s_q(t) z^q} \psi(z)\\[5pt]
e^{H[s(t)]} \psi^\ast(z) e^{-H[s(t)]} &= e^{\sum_{q \geq 1} s_q(t) z^q} \psi^\ast(z).
\end{align*}
In particular, the special case $n=1$ in the definition of $s_q^{(n)}(\boldsymbol{x} ; t)$ gives:
$$ e^{\phi_+(x;t)} \psi^{[\ast]}(z) e^{-\phi_+(x;t)} = \text{exp} \left( \pm \sum_{q \geq 1} \frac{1}{q} (1+(-1)^{q+1} t^q) (xz)^q \right) \psi^{[\ast]}(z). $$
\end{proposition}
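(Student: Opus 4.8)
The plan is to reduce the two conjugation identities to a single commutator between $H[s(t)]$ and the generating fields, and then exponentiate. \textbf{Step 1:} I would first compute the mode commutators $[J_n,\psi_c]$ and $[J_n,\psi_c^\ast]$ for $n\ge 1$ directly from the Clifford relations \eqref{pluscomm}. The key simplification is that for $n\ge 1$ the normal ordering in $J_n=\sum_{i}{:}\psi^\ast_{i-n}\psi_i{:}$ is harmless (the would-be correction $\delta_{i-n,i}$ vanishes), so one may replace ${:}\psi^\ast_{i-n}\psi_i{:}$ by $\psi^\ast_{i-n}\psi_i$, and a one-line rearrangement of anticommutators collapses the sum over $i$ to a single surviving term, giving $[J_n,\psi_c]=-\psi_{c+n}$ and $[J_n,\psi^\ast_c]=\psi^\ast_{c-n}$. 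Repackaging these in terms of the fields $\psi(z),\psi^\ast(z)$ (a reindexing of the defining sums) yields
$$[J_n,\psi(z)]=-z^n\psi(z),\qquad [J_n,\psi^\ast(z)]=z^n\psi^\ast(z).$$

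\textbf{Step 2:} Since $[J_m,J_n]=m\,\delta(m+n)\cdot 1$ vanishes for $m,n\ge1$, the summands of $H[s(t)]=\sum_{q\ge1}s_q(t)J_q$ pairwise commute; multiplying Step 1 by $s_q(t)$ and summing gives
$$[H[s(t)],\psi(z)]=-f(z)\,\psi(z),\qquad [H[s(t)],\psi^\ast(z)]=f(z)\,\psi^\ast(z),\qquad f(z):=\sum_{q\ge1}s_q(t)z^q,$$
where $f(z)$ is a scalar power series, hence central. \textbf{Step 3:} Now apply the standard identity $e^{A}Be^{-A}=\sum_{k\ge0}\tfrac1{k!}(\operatorname{ad}A)^k(B)$ with $A=H[s(t)]$ and $B=\psi(z)$: because $f(z)$ commutes with everything, $(\operatorname{ad}A)^k(\psi(z))=(-f(z))^k\psi(z)$, so the series sums to $e^{-f(z)}\psi(z)$, and identically for $\psi^\ast(z)$ with $+f(z)$. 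This is precisely the claimed pair of formulas. One should also record the (easy) well-definedness point that makes these manipulations legitimate: on any fixed vector of $\mathcal F$ each $J_q$ with $q\ge1$ strictly decreases the energy, which is bounded below, so $e^{\pm H[s(t)]}$ acts as a finite sum and everything above holds term by term in $z$.

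\textbf{Step 4:} For the last assertion, set $n=1$ in Definition~\ref{superhamdef}, so that $H_+[s(t)]=\phi_+(x;t)$ and $s_q^+(t)=\tfrac1q(1-(-t)^q)x^q$; then $f(z)=\sum_{q\ge1}\tfrac1q(1-(-t)^q)(xz)^q=\sum_{q\ge1}\tfrac1q\bigl(1+(-1)^{q+1}t^q\bigr)(xz)^q$, and substituting into Step 3 — lower sign for $\psi$, upper sign for $\psi^\ast$ — produces the displayed formula.

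The only genuinely delicate steps are the bookkeeping ones in Step 1: getting the fermion signs right and confirming that the normal ordering drops out because $n\ne0$. After that the argument is a formal consequence of the Heisenberg relations $[J_m,J_n]=m\delta(m+n)$ recorded earlier together with the elementary $\operatorname{ad}$-exponential lemma; I would also spend a sentence pinning down the sense in which the operators $e^{\pm H[s(t)]}$ exist so that the conjugation in Step 3 is meaningful.
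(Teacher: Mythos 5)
Your proposal is correct and takes essentially the same route as the paper: compute $[J_q,\psi(z)]=-z^q\psi(z)$ (resp.\ $+z^q\psi^*(z)$) from the Clifford relations, sum against $s_q(t)$ to get $[H[s(t)],\psi(z)]=-f(z)\psi(z)$ with $f(z)$ central, and then exponentiate. The only cosmetic difference is the final step, where you invoke the direct expansion $e^{A}Be^{-A}=\sum_k\tfrac1{k!}(\operatorname{ad}A)^k(B)$, while the paper reaches the same conclusion via the nilpotency trick $e^{\psi(z)}=1+\psi(z)$ together with a BCH-type rearrangement of $e^{H}e^{\psi(z)}$.
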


\begin{proof}
This proof follows similarly to the case of $t=0$ sketched in \cite{Zinn-Justin}. It is not hard to check that $J_q \psi(z) - \psi(z) J_q = -z^q \psi(z)$; one applies the commutation relations to $J_q\psi_{-k} z^{k+\frac{1}{2}}$ and notices the relation $[\psi_j,\psi_i^*] = \delta_{ij}$ gives a term not found in $\psi_{-k}z^{k+\frac{1}{2}}J_q$; then sum over $k$.  This immediately implies (multiplying by $s_q(t)$ and summing over $q$) that
$$ H[s(t)] \psi(z) - \psi(z) H[s(t)] = - \sum_{q \geq 1} s_q(t) z^q \psi(z). $$
As Zinn-Justin \cite{Zinn-Justin} notes, it now remains to exponentiate both sides and analyze the result.

First note that 
$$ e^{\psi(z)} = \sum_{k=0}^\infty \frac{\psi(z)^k}{k!} = 1 + \psi(z) $$
because $[\psi_r, \psi_s]_+ := \psi_r \psi_s + \psi_s \psi_r = 0$ for all $r, s$ (even $r=s$) which implies that $\psi^2(z)=0$ and thus all higher terms in the exponential power series vanish.

Since
$$ e^X e^Y = e^{Y+[X,Y]+\frac{1}{2!} [X,[X,Y]] + \cdots} e^X, $$
then
$$ e^{H[s(t)] \psi(z)} = e^{\psi(z) + [H[s(t)],\psi(z)] + \cdots} e^{H[s(t)]} $$
But $[H[s(t)],\psi(z)] = f(z; t) \psi(z)$ where $f(z; t) = - \sum_{q \geq 1} s_q(t) z^q$, which commutes with any operator on fermions as a formal element
of the coefficient ring $\mathbb{C}[z, t]$. Moreover, the nested commutators will give rise to powers of $f(z;t)$ multiplying $\psi(z)$ since 
$$ [H[s(t)], f(z;t)\psi(z)]= f(z;t)^2 \psi(z), \; \text{etc.} $$ 
Thus the right-hand side becomes:
$$ e^{\psi(z) + [H[s(t)],\psi(z)] + \cdots} e^{H[s(t)]} = e^{e^{f(z;t)}\psi(z)} e^{H[s(t)]} = e^{e^{f(z;t)}\psi(z)} e^{H[s(t)]} = e^{e^{f(z;t)} \psi(z)} e^{H[s(t)]}, $$
which then implies
$$ e^{H[s(t)]} \psi(z) e^{-H[s(t)]} = e^{H[s(t)]} (e^{\psi(z)}-1) e^{-H[s(t)]} = e^{e^{f(z;t)} \psi(z)} - 1, $$
but again $e^{e^{f(z;t)} \psi(z)}$ is just $1 + e^{f(z;t)} \psi(z)$ and so this now gives:
$$ e^{H[s(t)]} \psi(z) e^{-H[s(t)]} = e^{e^{f(z;t)} \psi(z)} - 1 = e^{f(z;t)} \psi(z) = e^{- \sum_{q \geq 1} s_q(t) z^q} \psi(z), $$
as desired. The proof of the analogous formula for $\psi^\ast(z)$ follows similarly.
\end{proof}


In light of the previous result, it will be useful to have an expression for the quantity $e^{\sum_{q \geq 1} s_q(t) \sum_{i=1}^n z_i^q}$ which appears when commuting $e^{H[s(t)]}$ past $\psi(z)$.
\begin{lemma} \label{sumtheseries}
$$ e^{\sum_{q \geq 1} s_q(t) \sum_{i=1}^n z_i^q} = \prod_{1 \leq i, j \leq n} \frac{(1+t z_i x_j)}{(1-z_i x_j)} $$
\end{lemma}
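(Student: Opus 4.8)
The plan is a direct computation: unwind the definition of $s_q(t)$, interchange the order of summation, recognize the inner sum as a logarithm series, and exponentiate. Here $s_q(t)$ is $s_q^+(t)$ from Definition~\ref{superhamdef} with all the $t_j$ equal to a single parameter $t$, so that $s_q(t) = \tfrac{1}{q}\sum_{j=1}^n(1-(-t)^q)x_j^q$; substituting this and separating the two factors of the summand gives
\[
\sum_{q\geq 1} s_q(t)\sum_{i=1}^n z_i^q \;=\; \sum_{q\geq 1}\frac{1}{q}\sum_{i=1}^n\sum_{j=1}^n\bigl((z_i x_j)^q-(-t z_i x_j)^q\bigr) \;=\; \sum_{1\leq i,j\leq n}\ \sum_{q\geq 1}\frac{1}{q}\bigl((z_i x_j)^q-(-t z_i x_j)^q\bigr).
\]
The interchange of the finite double sum over $i,j$ with the sum over $q$ is harmless: everything is most naturally read as an identity of formal power series in $z_1,\dots,z_n,x_1,\dots,x_n,t$, which is also the interpretation forced by the appearance of such expressions inside the operator exponentials of Proposition~\ref{commutation}.

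Next I would apply the elementary power series identity $\sum_{q\geq 1} w^q/q = -\log(1-w)$ to each of the two pieces of the inner sum --- taking $w = z_i x_j$ for the first and $w = -t z_i x_j$ for the second --- to obtain
\[
\sum_{q\geq 1}\frac{1}{q}\bigl((z_i x_j)^q-(-t z_i x_j)^q\bigr) \;=\; -\log(1-z_i x_j)+\log(1+t z_i x_j) \;=\; \log\frac{1+t z_i x_j}{1-z_i x_j}.
\]
Summing over all pairs $1\leq i,j\leq n$ and then exponentiating turns the sum of logarithms into the asserted product:
\[
e^{\sum_{q\geq 1} s_q(t)\sum_{i=1}^n z_i^q} \;=\; \exp\!\Bigl(\sum_{1\leq i,j\leq n}\log\tfrac{1+t z_i x_j}{1-z_i x_j}\Bigr) \;=\; \prod_{1\leq i,j\leq n}\frac{1+t z_i x_j}{1-z_i x_j}.
\]

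There is no real obstacle in this argument; the only points that merit a sentence of justification are the legitimacy of reordering the sums and of manipulating $\sum_q w^q/q$ termwise, both of which are immediate in the formal power series setting (and equally valid, by absolute convergence, whenever $|z_i x_j|$ and $|t z_i x_j|$ are small). I would also note in passing that the computation is unchanged if one keeps a separate parameter $t_j$ attached to each $x_j$: replacing $(-t z_i x_j)^q$ by $(-t_j z_i x_j)^q$ throughout yields the refined identity $e^{\sum_{q\geq 1} s_q^+(t)\sum_i z_i^q}=\prod_{i,j}\frac{1+t_j z_i x_j}{1-z_i x_j}$, which is the form actually needed when commuting a product $\prod_i e^{\phi_+(x_i;t_i)}$ past the fermionic fields.
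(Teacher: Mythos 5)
Your proof is correct and follows essentially the same route as the paper's: both unwind $s_q(t)=\frac{1}{q}(1-(-t)^q)\sum_j x_j^q$, sum the resulting series $\sum_{q\geq 1}\frac{1}{q}\bigl((z_ix_j)^q-(-tz_ix_j)^q\bigr)=\log\frac{1+tz_ix_j}{1-z_ix_j}$ (the paper phrases this as reducing to the $n=1$ case), and exponentiate. The extra remarks on formal-power-series justification and the refined identity with separate $t_j$ are fine but not needed.
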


\begin{proof} Recall that 
$$ s_q^{(n)}(\boldsymbol{x};t) = (1 + (-1)^{q+1} t^q) s_q^{(n)}(\boldsymbol{x}) = \frac{(1 + (-1)^{q+1} t^q)}{q} \sum_{j=1}^n x_j^q . $$
The result follows easily from the case $n=1$. There we sum the series
$$ \sum_{q \geq 1} \frac{1 + (-1)^{q+1} t^q}{q} (z x)^q = \log \left( \frac{1+t z x}{1 - z x} \right), $$
and exponentiating gives the result. 
\end{proof}



\begin{proof}[Proof of Theorem~\ref{th:computing.weight.of.one.step}] 
Recall that the state $\Ket{\lambda}$ is defined by
\begin{align*} \Ket{\lambda} & = \psi_{\lambda_1  - \frac{1}{2}}^\ast  \psi_{\lambda_2 - \frac{1}{2}}^\ast \cdots \psi_{\lambda_n - \frac{1}{2}}^\ast \Ket{\emptyset} \\
& = \psi^\ast(z_1) \psi^\ast(z_2) \cdots \psi^\ast(z_n) \Ket{ \emptyset } \, \Big\vert_{\boldsymbol{z}^{\lambda}}
\end{align*}
where the notation $\vert_{\boldsymbol{z}^{\lambda}}$ means we take the coefficient of $\boldsymbol{z}^{\lambda} = z_1^{\lambda_1} \cdots z_n^{\lambda_n}$. 
Thus
$$ \psi_{-1/2} \Ket{ \lambda } = (-1)^n \left[ \psi^\ast(z_1) \psi^\ast(z_2) \cdots \psi^\ast(z_n) \psi_{-\frac{1}{2}}\Ket{ \emptyset } \, \Big\vert_{\boldsymbol{z}^{\lambda}} \right].$$
The process of extracting this coefficient clearly commutes with the action of $e^{\phi_+(x;t)}$ on states, since $\phi_+(x;t)$ is independent of $\boldsymbol{z}$.
Subsequently applying the commutation relation from Proposition~\ref{commutation}
\begin{align*} \Braket{ \mu | e^{\phi_+(x;t)} \psi_{-1/2} | \lambda } & = (-1)^n \Braket{ \mu | e^{\phi_+(x;t)} \psi^\ast(z_1) \cdots \psi^\ast(z_n) \psi_{-\frac{1}{2}} | \emptyset } \, \Big\vert_{\boldsymbol{z}^{\lambda}} \\
& = (-1)^n \left[ e^{\sum_{q \geq 1} s_q^{(1)}(x_1;t) \sum_{j=1}^n z_j^q} \Braket{ \mu | \psi^\ast(z_1) \cdots \psi^\ast(z_n) \psi_{-\frac{1}{2}} | \emptyset } \right] \, \Big\vert_{\boldsymbol{z}^{\lambda}} 
\end{align*}
where we have used that $e^{\phi_+(x;t)} \psi_{-\frac{1}{2}} \Ket{ \emptyset} = \psi_{-\frac{1}{2}}\Ket{ \emptyset}$ in the second equality, since no particles can move to lower index under a $J_q$ in the shifted vacuum state. The bra-ket above can be explicitly computed. By orthogonality, the creation operators must act exactly at the parts of $\mu$ (shifted by $1/2$) and at $-1/2$, in order to pair with the bra $\Bra{\mu}$. Letting $\mu_- = (\mu_1, \ldots, \mu_{n-1}, 0)$,
$$ \Braket{ \mu | \psi^\ast(z_1) \cdots \psi^\ast(z_n) \psi_{-\frac{1}{2}} |\emptyset } = \sum_{w \in S_n} (-1)^{\ell(w)} \boldsymbol{z}^{w(\mu_-)}, $$
where, as usual, the elements $w$ act on $\mu_-$ by permuting the parts.

Moreover as in Lemma~\ref{sumtheseries},
$$ e^{\sum_{q \geq 1} s_q^{(1)}(x;t) \sum_{j=1}^n z_j^q} = \prod_{1 \leq j \leq n} \frac{(1+t z_j x)}{(1-z_j x)}. $$
So we are left to compute:
\begin{equation} (-1)^n \left[ \sum_{w \in S_n} (-1)^{\ell(w)} \boldsymbol{z}^{w(\mu_-)} \prod_{1 \leq j \leq n} \frac{(1+t z_j x)}{(1-z_j x)} \right]  \, \Big\vert_{\boldsymbol{z}^{\lambda}}. \label{expansiontocompute} \end{equation}
We claim that will be 0 unless $\mu_i \leq \lambda_i$ for all $i \in [1, n-1]$.  To see this, suppose that $\mu_i > \lambda_i$ for some $i$. Let $\omega \in S_n$ be given.  If there exists $j \leq i$ so that $\omega$ takes $j$ to $i$, then the exponent of $z_i$ in $\mathbf{z}^{\omega(\mu_-)}$ is $\mu_j> \mu_i > \lambda_i$.  Note that the expansion of $\prod \frac{(1+tz_jx)}{(1-z_jx)}$ only has non-negative powers of each $z$, and hence the coefficient of $\mathbf{z}^\lambda$ in $\mathbf{z}^{w(\mu_-)}\prod \frac{(1+tz_jx)}{(1-z_jx)}$ is zero.  If, on the other hand, $i$ is the image of some element from $\{i+1,\cdots,n\}$ under $w$, then by the pigeon hole principle there exists some $k>i$ which is the image of some $j \leq i$ under $w$.  Using the same argument as before (but examining the coefficient of $z_k$), we again get a zero coefficient for $\mathbf{z}^\lambda$ in $\mathbf{z}^{w(\mu_-)}\prod \frac{(1+tz_jx)}{(1-z_jx)}$. 


Now suppose that $\mu_i < \lambda_{i+1}$, and let $w \in S_n$ be given.  Define $\tilde w = w s_i$, where $s_i$ is the simple reflection $(i,i+1)$.  Using the identity 
\begin{equation}\label{eq:power.series.identity}\frac{(1+tzx)}{(1-zx)} = 1 + (1+t)\sum_{\ell \geq 1} (zx)^\ell
\end{equation}
and the fact that $\mu_i < \lambda_{i+1}$ implies $\mu_i < \lambda_i$, we find that $$\mathbf{z}^{w(\mu_-)}\prod_{1 \leq j \leq n} \frac{(1+tz_jx)}{(1-z_jx)} \, \Big \vert_{\boldsymbol{z}^\lambda} = \mathbf{z}^{\tilde w(\mu_-)}\prod_{1 \leq j \leq n} \frac{(1+tz_jx)}{(1-z_jx)} \, \Big \vert_{\boldsymbol{z}^\lambda}.$$  Since $\ell(w) +1 = \ell(\tilde w)$, the sum (\ref{expansiontocompute}) is zero.  Hence (\ref{expansiontocompute}) is zero unless $\mu_i \geq \lambda_{i+1}$ for each $i$; combined with the result from the previous paragraph, this means that (\ref{expansiontocompute}) is zero unless $\mu$ interleaves $\lambda$.

So suppose that $\mu$ interleaves $\lambda$, and we calculate (\ref{expansiontocompute}).  
Let $\mathcal{I} = \{1 \leq i \leq n-1: \mu_i = \lambda_{i+1}\}$; note that $|\mathcal{I}| = r(\lambda;\mu)$ by definition.  For each such $i \in \mathcal{I}$, let $s_{i}$ be the simple reflection $(i,i+1)$.  For a nonempty subset $J = \{i_1 < \cdots < i_k\} \subseteq \mathcal{I}$, define $\sigma_J = s_{i_1} \cdots s_{i_k}$; when $J = \emptyset$, define $\sigma_J = e$.  Then the contributions to $\mathbf{z}^\lambda$ come from those elements of $S_n$ of the form $\sigma_J$ for some $J \subseteq \mathcal{I}$, and 
\begin{align*}\left[(-1)^{\ell(\sigma_J)}\mathbf{z}^{\sigma_J(\mu_-)}\prod_{1 \leq j \leq n}\frac{(1+tz_jx)}{(1-z_j x)}\right] \Big\vert_{\mathbf{z}^\lambda} &= \left[(-1)^{\ell(\sigma_J)}\prod_{1 \leq j \leq n} \frac{(1+tz_jx)}{(1-z_jx)}\right]\Big\vert_{\mathbf{z}^{\lambda-\sigma_J(\mu_-)}}\\
&= (-1)^{|J|}(1+t)^{s(\lambda;\mu)+1-|J|}x^{|\lambda|-|\mu|},
\end{align*}
according to the identity (\ref{eq:power.series.identity}).  
Hence (\ref{expansiontocompute}) is
\begin{align*}
(-1)^n\sum_{J \subseteq I} (-1)^{|J|}(1+t)^{s(\lambda;\mu)+1-|J|}x^{|\lambda|-|\mu|}&=(-1)^n(1+t)^{s(\lambda;\mu)+1-|\mathcal{I}|}x^{|\lambda|-|\mu|}\left(\sum_{J \subseteq I} (-1)^{|J|}(1+t)^{|\mathcal{I}|-|J|}\right)\\&=(-1)^nt^{|\mathcal{I}|}(1+t)^{s(\lambda;\mu)+1-|\mathcal{I}|}x^{|\lambda|-|\mu|}.
\end{align*}
\end{proof}

\begin{proof}[Proof of Theorem~\ref{th:factorization.of.partition.function}] The proof starts much as in the previous result:
\begin{align*} \psi_{-1/2} \left| \lambda \right> & = (-1)^n \psi_{\lambda_1 + n - 1/2}^\ast  \psi_{\lambda_2 + n - 3/2}^\ast \cdots \psi_{\lambda_n + 1/2}^\ast \psi_{-1/2} \left| 0 \right> \\
& = (-1)^n \psi^\ast(z_1) \psi^\ast(z_2) \cdots \psi^\ast(z_n) \left| -1 \right> \, \Big\vert_{\boldsymbol{z}^{\lambda}}.
\end{align*}
The process of extracting this coefficient clearly commutes with the action of $e^{\phi_+(x_i;t_i)}$ on states, since $\phi_+(x_i;t_i)$ is independent of $\boldsymbol{z}$ for all $i$. Applying the commutation relation from Proposition~\ref{commutation} to the rightmost $e^{\phi_+(x_n;t_n)}$ in the bra-ket:
\begin{multline} \left< \emptyset \right| \prod_{i=1}^n \left[ e^{\phi_+(x_i;t_i)} \psi_{-1/2} \right] \left| \lambda \right> 
\\ = (-1)^n \left[ e^{\sum_{q \geq 1} s_q^{(1)}(x_n; t_n) \sum_{i=1}^n z_i^q} \left< 0 \right| \prod_{i=1}^{n-1} \left[ e^{\phi_+(x_i;t_i)} \psi_{-1/2} \right] \psi^\ast(z_1) \psi^\ast(z_2) \cdots \psi^\ast(z_n) \left| -1 \right> \right] \, \Big\vert_{\boldsymbol{z}^{\lambda}} \label{firstcommutation} \\
= (-1)^n \left[ \prod_{1 \leq j \leq n} \frac{(1+t_n z_j x_n)}{(1-z_j x_n)} \left< 0 \right| \prod_{i=1}^{n-1} \left[ e^{\phi_+(x_i;t_i)} \psi_{-1/2} \right] \psi^\ast(z_1) \psi^\ast(z_2) \cdots \psi^\ast(z_n) \left| -1 \right> \right] \Big\vert_{\boldsymbol{z}^{\lambda}}
\end{multline}
where we have used that $e^{\phi_+(x_n;t_n)} \left| -1 \right> = \left| -1 \right>$ in the first equality, since no particles can move to lower index under a $J_q$ in this shifted vacuum state. We have used Lemma~\ref{sumtheseries} in the last equality.
The resulting bra-ket will be 0, owing to the rightmost $\psi_{-1/2}$, unless one of the $\psi^\ast(z_j)$ creates a particle at $-1/2$ for some $j \in [1,n]$. This creation operator at $-1/2$ comes with a power $z_j^{-1}$. Thus we may rewrite the bra-ket appearing in (\ref{firstcommutation}), pushing the rightmost $\psi_{-1/2}$ past the creation operators $\psi^\ast(z_j)$, as a sum of $n$ bra-kets as follows:
$$ (-1)^{n-1} \sum_{j=1}^n (-1)^{n-j} z_j^{-1} \left< 0 \right| \prod_{i=1}^{n-2} \left[ e^{\phi_+(x_i;t_i)} \psi_{-1/2} \right] e^{\phi_+(x_{n-1}; t_{n-1})} \psi^\ast(z_1) \psi^\ast(z_2) \cdots \hat{\psi}^\ast(z_j) \cdots \psi^\ast(z_n) \left| -1 \right>,  $$
where the $\hat\psi^\ast(z_j)$ means this operator is to be omitted in the product. Then moving $e^{\phi_+(x_{n-1};t_{n-1})}$ rightward in each summand produces:
\begin{multline} \sum_{j=1}^n (-1)^{j+1} z_j^{-1} e^{\sum_{q \geq 1} s_q^{(1)}(x_{n-1}; t_{n-1}) \sum_{i \ne j} z_i^q} \\  \left< 0 \right| \prod_{i=1}^{n-2} \left[ e^{\phi_+(x_i;t_i)} \psi_{-1/2} \right] \psi^\ast(z_1) \psi^\ast(z_2) \cdots \hat{\psi}^\ast(z_j) \cdots \psi^\ast(z_n) \left| -1 \right>,  \end{multline}
Using Lemma~\ref{sumtheseries} to rewrite the above exponential, then (\ref{firstcommutation}) is equal to:
\begin{multline*} (-1)^n \left[ \prod_{k=1}^n \frac{(1+t_n z_k x_n)}{(1-z_k x_n)} \sum_{j=1}^n (-1)^{n-j+1} z_j^{-1} \prod_{k \ne j} \frac{(1+t_{n-1} z_k x_{n-1})}{(1-z_k x_{n-1})} \right. \\
\left. \left< 0 \right| \prod_{i=1}^{n-2} \left[ e^{\phi_+(x_i;t_i)} \psi_{-1/2} \right] \psi^\ast(z_1) \psi^\ast(z_2) \cdots \hat{\psi}^\ast(z_j) \cdots \psi^\ast(z_n) \left| -1 \right> \right] \Big\vert_{\boldsymbol{z}^{\lambda}}
\end{multline*}
We repeat the same process as above to successively move terms $e^{\phi_+(x_i;t_i)} \psi_{-1/2}$ rightward in the bra-ket. Upon completion, we obtain:
\begin{multline}
 \left[ (-1)^{\frac{n(n+1)}{2}} \sum_{\sigma \in S_n} (-1)^{\ell(\sigma)} (z_1 \cdots z_n)^{-1} \prod_{k=1}^n \frac{(1+t_n z_k x_n)}{(1-z_k x_n)}  \prod_{k \ne j_n} \frac{(1+t_{n-1} z_k x_{n-1})}{(1-z_k x_{n-1})} \cdots \right. \\ \left. \cdots \mkern -18mu \prod_{k \ne j_n, \ldots, j_{2}} \mkern -7mu \frac{(1+t_1 z_k x_1)}{(1-z_k x_1)} \right] \Big\vert_{\boldsymbol{z}^{\lambda}},
\label{lotsofprods} \end{multline}
where the partition $\sigma$ for each summand is dictated by the order in which we pick off creation operators $\psi^\ast(z_{j_i})$ required to create a particle at $-1/2$ for each time step, and we have used the notation $(j_1, \ldots, j_n)$ for $(\sigma(1), \ldots, \sigma(n))$. Thus the identity partition $e \in S_n$ corresponds to using $\psi^\ast(z_n)$ to create a particle at $-1/2$, next $\psi^\ast(z_{n-1})$ and so on. Such an ordering produces no minus signs associated to the creation of fermions at $-1/2$ in the sum over $S_n$, as one is always creating the left-most particle at $-1/2$. Moreover, a single simple reflection changes the sign to $(-1)$, resulting in the $(-1)^{\ell(\sigma)}$ inside the summation. The sign $(-1)^{n(n+1)/2}$ comes from commuting each of the $\psi_{-1/2}$ past the creation operators $\psi^\ast(z_j)$ at each stage, and the number of such creation operators at each stage are $n$, $n-1$, \ldots, $1$.

We will use the determinant identity of Cauchy (see for example Equation (7.121) of Stanley \cite{stanley})
\begin{equation} \det_{1 \leq i, j \leq n} \left\{ (1 - x_i z_j)^{-1} \right\} = \frac{\prod_{i<j} (x_i - x_j) (z_i - z_j)}{\prod_{1 \leq i,j \leq n} (1 - z_i x_j)} 
\label{detident} \end{equation}
to rewrite the above result. In view of this, rewrite (\ref{lotsofprods}) in the form
\begin{multline} (-1)^{n(n+1)/2} \left[ (z_1 \cdots z_n)^{-1} \prod_{i,j} (1- z_i x_j)^{-1} \prod_{k=1}^n (1+t_n z_k x_n) \sum_{\sigma \in S_n} (-1)^{\ell(\sigma)} \right. \\ 
\left. \prod_{k \ne j_n} (1+t_{n-1} z_k x_{n-1})(1-z_{j_n} x_{n-1}) \cdots \prod_{\substack{k \ne j_n, \ldots, j_{2} \\ \text{i.e. } k = j_1}} (1+t_1 z_k x_1)(1-z_{j_n} x_1) \cdots (1-z_{j_{2}} x_1) \right] \, \Big\vert_{\boldsymbol{z}^{\lambda}} \label{commondenom}
\end{multline}
We wish to evaluate the terms appearing in the above summation over $S_n$. For $n \geq 2$, let $P_n$ be the polynomial
$$ P_n(\boldsymbol{x}; \boldsymbol{z}; \boldsymbol{t}) := \sum_{\sigma \in S_n} (-1)^{\ell(\sigma)}  \prod_{\ell=1}^{n-1} \left(\left(\prod_{k =1}^{\ell} (1+t_{\ell} z_{j_k} x_{\ell})\right)\left(\prod_{k=\ell+1}^n1-z_{j_k} x_{\ell}\right)\right)
$$
where here we regard $\boldsymbol{x} = (x_1, \ldots, x_{n-1})$ and $\boldsymbol{t} = (t_1, \ldots, t_{n-1})$ and as usual $\boldsymbol{z} = (z_1, \ldots, z_n)$.

To evaluate, we make a variable change, setting $t_i x_i = y_i$ for $i=1, \ldots, n-1$. Let $\hat{P}_n$ be the corresponding polynomial:
$$ \hat{P}_n(\boldsymbol{x}; \boldsymbol{z}; \boldsymbol{y}) := \sum_{\sigma \in S_n} (-1)^{\ell(\sigma)}  \prod_{k \ne j_n} (1+ y_{n-1} z_k)(1-z_{j_n} x_{n-1}) \cdots \mkern -18mu \prod_{\substack{k \ne j_n, \ldots, j_{2} \\ \text{i.e. } k = j_1}} (1+ y_1 z_k)(1-z_{j_n} x_1) \cdots (1-z_{j_{2}} x_1). $$

\begin{lemma} For any $n \geq 2$, 
$$ \hat{P}_n = \prod_{i=1}^{n-1} (y_i+x_i) \prod_{1 \leq i < j \leq n-1} (x_i+y_j) \prod_{i < j} (z_i - z_j).$$
\label{divislemma} \end{lemma}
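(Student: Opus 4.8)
The plan is to recognize $\hat P_n$ as a single $n\times n$ determinant and then induct on $n$. The first step rewrites the summand. Fixing $\sigma$ with $(j_1,\dots,j_n)=(\sigma(1),\dots,\sigma(n))$, collect for each position $k$ all the linear factors in which the single variable $z_{j_k}$ occurs: running over $\ell\in\{1,\dots,n-1\}$, the index $\ell$ contributes the factor $(1+y_\ell z_{j_k})$ exactly when $\ell\ge k$ and the factor $(1-x_\ell z_{j_k})$ exactly when $\ell<k$. Hence the summand equals $\prod_{k=1}^n e_{k-1}(z_{j_k})$, where for $0\le r\le n-1$ we put $e_r(z):=\prod_{\ell=1}^{r}(1-x_\ell z)\prod_{\ell=r+1}^{n-1}(1+y_\ell z)$, and therefore
\[ \hat P_n = \sum_{\sigma\in S_n}(-1)^{\ell(\sigma)}\prod_{k=1}^n e_{k-1}(z_{\sigma(k)}) = \det_{1\le k,m\le n}\bigl(e_{k-1}(z_m)\bigr). \]

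Two structural observations then follow at once. First, swapping $z_a$ and $z_b$ interchanges two columns, so $\hat P_n$ is antisymmetric in $z_1,\dots,z_n$ and is divisible by $\prod_{i<j}(z_i-z_j)$; since each $e_r$ has degree at most $n-1$ in its variable, $\hat P_n$ has degree at most $n-1$ in $z_1$, which forces the quotient $Q:=\hat P_n/\prod_{i<j}(z_i-z_j)$ to have degree $0$ in $z_1$, and by the symmetry $Q$ is a polynomial in $\boldsymbol x,\boldsymbol y$ only. Second, for $r\le n-2$ the polynomial $e_r$ carries the factor $(1+y_{n-1}z)$, whereas $e_{n-1}(z)=\prod_{\ell=1}^{n-1}(1-x_\ell z)$ does not.

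The induction identifies $Q$. Specialize $z_n=-1/y_{n-1}$ and expand the determinant along the last column: by the second observation only the $(n,n)$-entry survives, and $e_{n-1}(-1/y_{n-1})=y_{n-1}^{-(n-1)}\prod_{\ell=1}^{n-1}(x_\ell+y_{n-1})$, while its complementary minor is $\det_{0\le r\le n-2,\,1\le m\le n-1}\bigl(e_r(z_m)\bigr)$. Pulling the factor $(1+y_{n-1}z_m)$ out of column $m$ of that minor turns each $e_r$ into the analogous degree-$(n-2)$ polynomial in $x_1,\dots,x_{n-2},y_1,\dots,y_{n-2}$, so the minor equals $\prod_{m=1}^{n-1}(1+y_{n-1}z_m)\cdot\hat P_{n-1}$. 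Combining this with $\prod_{i<j\le n}(z_i-z_j)\big|_{z_n=-1/y_{n-1}}=y_{n-1}^{-(n-1)}\prod_{i=1}^{n-1}(1+y_{n-1}z_i)\prod_{i<j\le n-1}(z_i-z_j)$ and cancelling the common ($z$-dependent) factors in the identity $\hat P_n=Q\prod_{i<j}(z_i-z_j)$ yields $Q=\prod_{\ell=1}^{n-1}(x_\ell+y_{n-1})\cdot Q_{n-1}$. The base case $n=2$ is a two-term computation giving $\hat P_2=(x_1+y_1)(z_1-z_2)$, i.e.\ $Q_2=x_1+y_1$; feeding in the inductive hypothesis $Q_{n-1}=\prod_{1\le i\le j\le n-2}(x_i+y_j)$ then gives $Q=\prod_{1\le i\le j\le n-1}(x_i+y_j)=\prod_{i=1}^{n-1}(x_i+y_i)\prod_{1\le i<j\le n-1}(x_i+y_j)$, which is exactly the claimed formula.

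The main obstacle is the first step: seeing that the apparently tangled summand genuinely factors as $\prod_k e_{k-1}(z_{\sigma(k)})$, so that $\hat P_n$ is literally a determinant — this requires careful bookkeeping of which of the $2(n-1)$ linear factors attaches to which $z$-variable. After that the argument is routine, the only further point needing attention being the signs and the column extraction in the cofactor expansion, which are painless here because the surviving cofactor sits at the corner $(n,n)$.
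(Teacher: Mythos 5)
Your proof is correct, and it takes a genuinely different route from the paper. The key move you make — grouping the $2(n-1)$ linear factors of each summand by the $z$-variable they contain, so that the summand becomes $\prod_k e_{k-1}(z_{\sigma(k)})$ with $e_r(z)=\prod_{\ell\le r}(1-x_\ell z)\prod_{\ell>r}(1+y_\ell z)$, and hence $\hat P_n=\det\bigl(e_{k-1}(z_m)\bigr)$ — is faithful to the definition of $P_n$ (the factor $(1+y_\ell z_{j_k})$ occurs iff $k\le\ell$ and $(1-x_\ell z_{j_k})$ iff $k>\ell$), and I verified your induction: the specialization $z_n=-1/y_{n-1}$ kills every entry of the last column except the $(n,n)$ one because $(1+y_{n-1}z)$ divides $e_r$ for $r\le n-2$, the column extraction correctly produces $\prod_m(1+y_{n-1}z_m)\,\hat P_{n-1}$ in the smaller variable set, the Vandermonde specializes as you state, and after cancelling the common nonzero factors the recursion $Q_n=\prod_{\ell=1}^{n-1}(x_\ell+y_{n-1})\,Q_{n-1}$ together with the base case $\hat P_2=(x_1+y_1)(z_1-z_2)$ gives exactly $\prod_{1\le i\le j\le n-1}(x_i+y_j)$. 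The paper argues quite differently: it uses the antisymmetry and a degree count to reduce to identifying a $z$-independent quotient, then establishes divisibility by each factor $x_i+y_i$ via a pairing $\{\sigma,\sigma t_i\}$ of permutations, and divisibility by the cross factors $x_i+y_j$ ($i<j$) only indirectly — first showing membership in radical ideals $\langle x_i+y_j,\,x_{i+1}+y_i,\dots,x_j+y_{j-1}\rangle$ by identifying the sets $\{x_i,\dots,x_j\}$ and $\{-y_i,\dots,-y_j\}$, and then upgrading this to honest divisibility by an inductive degree-counting argument, finally fixing the constant by homogeneity. Your determinantal identification makes the antisymmetry immediate and replaces the paper's most delicate step (the radical-ideal/degree induction) by a single cofactor expansion; it is shorter and more transparent, at the cost of requiring the initial bookkeeping that reveals the determinant structure, whereas the paper's factor-by-factor divisibility argument works directly on the permutation sum without needing that observation.
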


Replacing $y_i = x_i t_i$, we obtain the immediate corollary evaluating $P_n$:

\begin{corollary} For any $n \geq 2$, 
$$ P_n(\boldsymbol{x}; \boldsymbol{z}; \boldsymbol{t}) = x_1 \cdots x_{n-1} \prod_{i=1}^{n-1} (1+t_i) \prod_{1 \leq i < j \leq n-1} (x_i+t_j x_j) \prod_{i < j} (z_i - z_j).$$
\end{corollary}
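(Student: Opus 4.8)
\emph{Plan.} I will prove Lemma~\ref{divislemma}; the Corollary then follows at once by substituting $y_i = t_i x_i$ and using $x_i+y_i = x_i(1+t_i)$ (so that $\prod_i(y_i+x_i) = x_1\cdots x_{n-1}\prod_i(1+t_i)$). The idea is to recognize $\hat P_n$ as an $n\times n$ determinant and then pin it down from its linear factors and multidegree. For the determinant, fix $\sigma\in S_n$, write $j_k=\sigma(k)$, and observe that in the product over $\ell=1,\dots,n-1$ defining $\hat P_n$ the $\ell$-th factor contributes $(1+y_\ell z_{j_k})$ exactly when $k\le\ell$ and $(1-x_\ell z_{j_k})$ exactly when $k\ge\ell+1$. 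Collecting, for each $k$, the linear factors that involve $z_{j_k}$, the $\sigma$-summand equals $(-1)^{\ell(\sigma)}\prod_{k=1}^n f_k(z_{\sigma(k)})$ where $f_k(z):=\prod_{\ell=k}^{n-1}(1+y_\ell z)\prod_{\ell=1}^{k-1}(1-x_\ell z)$; in particular $f_1(z)=\prod_{\ell=1}^{n-1}(1+y_\ell z)$ and $f_n(z)=\prod_{\ell=1}^{n-1}(1-x_\ell z)$. Hence $\hat P_n=\det_{1\le k,m\le n}[f_k(z_m)]$.

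\emph{Extracting the Vandermonde.} Swapping two columns of $[f_k(z_m)]$ both negates the determinant and transposes the corresponding pair of $z$-variables, so $\hat P_n$ is alternating in $z_1,\dots,z_n$ and hence divisible by $V(\boldsymbol z):=\prod_{i<j}(z_i-z_j)$ in $\mathbb{C}[\boldsymbol x,\boldsymbol y,\boldsymbol z]$. Moreover, in the Leibniz expansion of $\det[f_k(z_m)]$ the variable $z_m$ occurs only inside the single factor $f_{\sigma^{-1}(m)}(z_m)$, which has degree $n-1$; so $\deg_{z_m}\hat P_n\le n-1=\deg_{z_m}V(\boldsymbol z)$, and the quotient $c(\boldsymbol x,\boldsymbol y):=\hat P_n/V(\boldsymbol z)$ is a polynomial independent of $\boldsymbol z$. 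It then remains to show $c=\prod_{1\le i\le j\le n-1}(x_i+y_j)$, which is exactly $\prod_{i=1}^{n-1}(y_i+x_i)\prod_{1\le i<j\le n-1}(x_i+y_j)$.

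\emph{Identifying $c$, and the main obstacle.} I would first show $(x_i+y_j)\mid c$ for every $1\le i\le j\le n-1$: setting $y_j=-x_i$ makes $1+y_jz=1-x_iz$, and then each $f_k$ acquires the factor $1-x_iz$ — present as $(1+y_jz)$ when $k\le j$ and present literally when $k\ge i+1$, and since $i\le j$ these two cases exhaust $k=1,\dots,n$. Thus $\det[f_k(z_m)]\big|_{y_j=-x_i}$ is divisible by $\prod_{m=1}^n(1-x_iz_m)$; comparing with $c|_{y_j=-x_i}\cdot V(\boldsymbol z)$ and using that $1-x_iz_m$ is not associate to any $z_a-z_b$ forces $c|_{y_j=-x_i}=0$. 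As the forms $x_i+y_j$ (for $i\le j$) are pairwise coprime, $\prod_{1\le i\le j\le n-1}(x_i+y_j)\mid c$; and since $\deg_{y_\ell}\hat P_n\le\ell$ and $\deg_{x_\ell}\hat P_n\le n-\ell$ (each of $y_\ell,x_\ell$ occurs linearly in $f_k$ and in only $\ell$, resp.\ $n-\ell$, of the rows), these bounds being met exactly by the product, $c$ equals a scalar times $\prod_{1\le i\le j\le n-1}(x_i+y_j)$. To pin the scalar to $1$, set all $x_\ell=0$: then $f_k(z)=\prod_{\ell=k}^{n-1}(1+y_\ell z)$ has degree $n-k$ with leading coefficient $\prod_{\ell=k}^{n-1}y_\ell$, so $\det[f_k(z_m)]$ is a generalized Vandermonde equal to $\bigl(\prod_{\ell=1}^{n-1}y_\ell^{\ell}\bigr)V(\boldsymbol z)$, matching $\bigl(\prod_{1\le i\le j\le n-1}(x_i+y_j)\bigr)\big|_{\boldsymbol x=0}=\prod_\ell y_\ell^{\ell}$. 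The crux is the first step — recognizing the alternating sum as $\det[f_k(z_m)]$ through the regrouping of linear factors; after that it is routine, the only delicate point being the coprimality argument showing $c|_{y_j=-x_i}=0$.
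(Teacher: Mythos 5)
Your proof is correct, and it takes a genuinely different route from the paper's. The paper proves Lemma~\ref{divislemma} by working directly with the alternating sum over $S_n$: antisymmetry in $\boldsymbol{z}$ plus a degree count gives the Vandermonde factor and a $\boldsymbol{z}$-free quotient, divisibility by $x_i+y_i$ comes from pairing permutations $\{\sigma,\sigma t_i\}$, but divisibility by $x_i+y_j$ for $i<j$ requires a delicate two-step argument (first showing $\hat P_n$ lies in the radical ideal $\langle x_i+y_j, x_{i+1}+y_i,\dots,x_j+y_{j-1}\rangle$ via identifying the sets $\{x_i,\dots,x_j\}$ and $\{-y_i,\dots,-y_j\}$, then an inductive degree argument with the $R_k$ decomposition to isolate the factor $x_i+y_j$), and finally the constant is fixed at $x_i=1$, $y_i=0$. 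Your key move — regrouping the linear factors of each $\sigma$-summand by which $z_{j_k}$ they contain, so that $\hat P_n=\det_{1\le k,m\le n}[f_k(z_m)]$ with $f_k(z)=\prod_{\ell=k}^{n-1}(1+y_\ell z)\prod_{\ell=1}^{k-1}(1-x_\ell z)$ — is not in the paper's proof, and it buys a uniform, essentially one-line treatment of \emph{all} the linear factors $x_i+y_j$ ($i\le j$): setting $y_j=-x_i$ puts the common factor $1-x_iz_m$ in every entry of column $m$, so the determinant is divisible by $\prod_m(1-x_iz_m)$, forcing the $\boldsymbol{z}$-free quotient to vanish; your per-variable degree bounds $\deg_{y_\ell}\le\ell$, $\deg_{x_\ell}\le n-\ell$ replace the paper's total-degree count, and the normalization via $\boldsymbol{x}=0$ (a generalized Vandermonde giving $\prod_\ell y_\ell^{\ell}\,\prod_{i<j}(z_i-z_j)$) replaces the paper's specialization. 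I checked the regrouping, the coprimality step, the degree bookkeeping, and the $n=2$ case against the definitions of $P_n$ and $\hat P_n$; all are sound. The trade-off: the paper's argument is self-contained permutation combinatorics, while yours requires spotting the determinant structure, but in exchange it eliminates the most intricate part of the paper's proof and is also in the spirit of the Wick-type determinant identities used elsewhere in the paper.
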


\begin{proof}[Proof of Lemma~\ref{divislemma}]
Given a pair of indices $i, j$, performing the change of variables $\sigma \mapsto (i \, j) \sigma$, where $(i \, j)$ is a transposition, we find that
$$ \hat{P}_n(\boldsymbol{x}; \boldsymbol{z}; \boldsymbol{y}) = - \hat{P}_n(\boldsymbol{x}; \boldsymbol{z}^{(i \, j)}; \boldsymbol{y}) $$
where $\boldsymbol{z}^{(i \, j)}$ denotes the vector $(z_1, \ldots, z_n)$ with roles of $z_i$ and $z_j$ interchanged. Thus $\hat{P}_n$ is antisymmetric in the $z_j$ and so divisible by $\prod_{i < j} z_i - z_j$. We may divide by this factor, and the resulting quotient $\hat{Q}_n$ is a symmetric polynomial with respect to the $z_j$, $j=1, \ldots, n$. We claim that $\hat{Q}_n$ is constant in the $z_j$. Indeed, the terms in $\hat{P}_n$ have degree at most $n-1$ in any single $z_j$, but 
$$ \hat{Q}_n \prod_{i < j} (z_i - z_j) = \hat{Q}_n \sum_{\sigma \in S_n} (-1)^{\ell(\sigma)} \boldsymbol{z}^{\sigma(\rho)}; $$
so $\hat{Q}_n$ must have degree 0 in all $z_j$, else it would contradict this degree bound.

Now the monomials in $\hat{P}_n$ have total degree in both $\boldsymbol{x}$ and $\boldsymbol{y}$ equal to the degree in $\boldsymbol{z}$, and hence {\it all} terms in $\hat{P}_n$ must have total degree $|\rho| = n(n-1)/2$ in $\boldsymbol{x}$ and $\boldsymbol{y}$. This implies that if the quantity appearing on the right-hand side in the statement of the lemma divides $\hat{P}_n$, then it must be equal to $\hat{P}_n$ (up to an easily determined absolute constant, say by setting $x_i=1$ and $y_i=0$ for all $i \in [1, n-1]$). 

To see that $\hat{P}_n$ is divisible by $x_{i} + y_{i}$ for any $i = 1, \ldots, n-1$, note that terms in the product containing $x_i$ and $y_i$ come from a product over $k \not\in \{ j_n, \ldots, j_{i+1} \}$. Setting $x_i = -y_i$ renders all binomials in this product equal, and so the indices $j_{i+1}$ and $j_{i}$ in $\hat{P}_n$ have symmetric roles. Thus using the transposition $t_i = (i+1, i)$ to partition $S_n$ into pairs $\{ \sigma, \sigma t_i \}$, we see that $\hat{P}_n = 0$ when $x_i = -y_i$, since the summands from each pair have length different by one and roles of $j_{i+1}$ and $j_{i}$ interchanged.

To finish the proof, we will show that $\hat{P}_n$ is divisible by $x_i + y_j$ for all pairs $i,j$ with $1 \leq i < j \leq n-1$. The proof is a more complicated version of the argument for divisibility by $y_i + x_i$ above. For $x_i + y_{i+1}$ with $i \leq n-2$, we use the transposition $t_{i, i+1} := (i, i+2)$ to partition $S_n$ into pairs $\{ \sigma, \sigma t_{i, i+1} \}$. These two summands for $\sigma$ and $\sigma t_{i, i+1}$ have a large common factor $C(\boldsymbol{x}; \boldsymbol{z}; \boldsymbol{y})$, and the length of their corresponding permutations differs by one. Remembering that $(j_1, \ldots, j_n) = (\sigma(1), \ldots, \sigma(n))$, their sum may be expressed as $C(\boldsymbol{x}; \boldsymbol{z}; \boldsymbol{y})$ times:
\begin{multline} (1 + y_i z_{j_{i}}) (1 - x_i z_{j_{i+2}})(1+y_{i+1}z_{j_i})(1-x_{i+1}z_{j_{i+2}}) - \\ (1+y_iz_{j_{i+2}})(1-x_iz_{j_i})(1+y_{i+1}z_{j_{i+2}})(1-x_{i+1}z_{j_{i}})  \label{uncommonfactors} \end{multline}
We see that the expression in (\ref{uncommonfactors}) vanishes for all such pairs $\{ \sigma, \sigma t_{i, i+1} \}$ upon identifying the sets $\{ x_i, x_{i+1} \}$ and $\{ -y_{i}, -y_{i+1} \}$. In particular for any $i \in [1, \ldots, n-2]$, setting $x_i = -y_{i+1}$ and $x_{i+1} = -y_i$ shows that $\hat{P}_n$ is in the radical ideal $\left< x_i + y_{i+1}, y_i+x_{i+1} \right>$.

Similarly for any $1 \leq i < j \leq n-1$, using the transposition $t_{i,j} = (i,j+1)$ to partition $S_n$ into pairs $\{ \sigma, \sigma t_{i,j} \}$, we see that $\hat{P}_n$ vanishes upon identifying the sets $\{ x_i, \ldots, x_j \}$ and $\{ -y_i, \ldots, -y_j \}$. Making the identifications $x_i = -y_j$ and $x_{i+1} = -y_i, \ldots, x_{j} = -y_{j-1}$ shows that $\hat{P}_n$ is in the radical ideal $\langle x_i + y_j, x_{i+1} + y_i, \ldots, x_j + y_{j-1} \rangle$. 

A simple degree counting argument shows that $\hat{P}_n$ must in fact be divisible by $x_i + y_{j}$ for $1 \leq i < j \leq n-1.$ Indeed, consider the special case $(i,j)=(n-2,n-1)$. According to the definition of $\hat{P}_n$, $x_{n-1}$ can appear with degree at most 1 in any monomial. Writing
$$ \hat{P}_n = \left[ R_1(\boldsymbol{x}; \boldsymbol{y}) (x_{n-1}+y_{n-2}) + R_2(\boldsymbol{x}; \boldsymbol{y}) (x_{n-2}+y_{n-1}) \right] \prod_{i < j} (z_i - z_j), $$
then we'd like to conclude that $R_1(\boldsymbol{x}; \boldsymbol{y}) = 0$ and so $\hat{P}_n$ is divisible by $x_{n-2} + y_{n-1}$. If we set $y_{n-1} = -x_{n-2}$ and $R_1 \ne 0$ then $R_1$ must be divisible by $x_{n-1} - x_{n-2}$ (since $\hat{P}_n$ was divisible by $x_{n-1} + y_{n-1}$ before specializing). But this contradicts the fact that all monomials in $\hat{P}_n$ have degree at most 1 in $x_{n-1}$. The following inductive argument similarly shows that for all $1 \leq i < j \leq n-1$, we have divisibility by $x_i + y_j$. 

Suppose $\hat{P}_n$ is divisible by $x_{i'} + y_{j'}$ for all pairs $(i',j')$ with $j' \geq i' > i$. Since $\hat{P}_n$ belongs to the radical ideal $\langle x_i + y_j, x_{i+1} + y_i, \ldots, x_j + y_{j-1} \rangle$, we may write
$$ \hat{P}_n = \left[ R_1(\boldsymbol{x}; \boldsymbol{y}) (x_i+y_j) + R_2(\boldsymbol{x}; \boldsymbol{y}) (x_{i+1}+y_i) + \cdots + R_{j+1-i}(\boldsymbol{x}; \boldsymbol{y}) (x_{j}+y_{j-1}) \right] \prod_{i < j} (z_i - z_j). $$
For a given $\ell \in [i+1,\cdots,j]$, set $y_j = -x_i$ and $x_{k} = -y_{k-1}$ for all $k \in [i+1, \ldots, j]$ with $k \ne \ell$. In this specialization, the only remaining term above is 
$$ \bar{R}_{\ell+1-i}(\boldsymbol{x}; \boldsymbol{y}) (x_{\ell}+y_{\ell-1}) \prod_{i < j} (z_i - z_j), $$
where $\bar{R}_{\ell+1-i}$ indicates we have specialized variables in $R_{\ell+1-i}$ as above. But according to our induction hypothesis, $\hat{P}_n$ is divisible by $x_{\ell}+y_k$ for $\ell \leq k <n$. Since all monomials in $\hat{P}_n$ have degree at most $n-\ell$ in $x_{\ell}$ then $R_{\ell+1-i}$ must be 0. Since $\ell$ was arbitrary, all $R_k$ with $k > 1$ vanish and $\hat{P}_n$ is divisible by $x_i + y_j$.
\end{proof}

Applying the corollary to (\ref{commondenom}), then $\left< \emptyset \right| \prod_{i=1}^n \left[ e^{\phi_+(x_i;t_i)} \psi_{-1/2} \right] \left| \lambda \right>$ is equal to
\begin{multline} (-1)^{n(n+1)/2} x_1 \cdots x_{n-1} \prod_{i=1}^{n-1} (1+t_i) \prod_{1 \leq i < j \leq n-1} (x_i + t_jx_j) \\
\left[ (z_1 \cdots z_n)^{-1} \prod_{i,j} (1- z_i x_j)^{-1} \prod_{k=1}^n (1+t_n z_k x_n) \prod_{i < j} (z_i - z_j) \right] \, \Big\vert_{\boldsymbol{z}^{\lambda}}
\end{multline}
and applying the Cauchy identity (\ref{detident}) to the term in brackets results in
\begin{multline} (-1)^{n(n+1)/2} x_1 \cdots x_{n-1} \prod_{i=1}^{n-1} (1+t_i) \prod_{1 \leq i < j \leq n-1} (x_i + t_jx_j) \prod_{i < j} (x_i - x_j)^{-1}  \\
\left[ \prod_{k=1}^n (z_k^{-1}+t_n x_n) \det_{1 \leq i, j \leq n} \left\{(1-x_i z_j)^{-1}\right\}  \right] \, \Big\vert_{\boldsymbol{z}^{\lambda}}.
\end{multline}

Thus to finish the theorem, it remains to show that
\begin{equation} \prod_{i < j} (x_i - x_j)^{-1} \left[ \prod_{k=1}^n (z_k^{-1}+t_n x_n) \det_{1 \leq i, j \leq n} \left\{(1-x_i z_j)^{-1}\right\}  \right] \, \Big\vert_{\boldsymbol{z}^{\lambda}} = 
\prod_{k=1}^n (x_k + t_nx_n) s_{\lambda-\rho}(\boldsymbol{x}). \label{lasttoshow} \end{equation}
The right-hand side of \eqref{lasttoshow} can be expanded as:
$$ \sum_{i=0}^n e_i(\boldsymbol{x}) (t_n x_n)^{n-i} s_{\lambda-\rho}(\boldsymbol{x}) $$
where $e_i$ is the $i$-th elementary symmetric polynomial. Next we apply Pieri's formula, which states:
$$ s_\nu(\boldsymbol{x}) e_i(\boldsymbol{x}) = \sum_{\mu \in \nu \otimes 1^i} s_{\mu} $$
where $\nu \otimes 1^i$ denotes the set of partitions obtained by adding $i$ boxes (at most one per column) to the Ferrers diagram for $\nu$.
Thus we arrive at:
\begin{equation} \sum_{i=0}^n e_i(\boldsymbol{x}) (t_n x_n)^{n-i} s_{\lambda-\rho}(\boldsymbol{x}) = \sum_{i=0}^n (t_n x_n)^{n-i} \sum_{\mu \in (\lambda-\rho) \otimes 1^i} s_{\mu}(\boldsymbol{x}), \label{rightsidemassaged} \end{equation}
where the sum over $\mu$ is again as in Pieri's formula.

But noting that
$$ \det_{1 \leq i, j \leq n} \left\{(1-x_i z_j)^{-1}\right\} \, \Big\vert_{\boldsymbol{z}^{\nu}} = \det_{i,j} (x_i^{\nu_j}) $$
then the left-hand side of \eqref{lasttoshow} is:
$$ \prod_{i < j} (x_i - x_j)^{-1} \left[ \prod_{k=1}^n (z_k^{-1}+t_n x_n) \det_{1 \leq i, j \leq n} \left\{(1-x_i z_j)^{-1}\right\}  \right] \, \Big\vert_{\boldsymbol{z}^{\lambda}} = \sum_{i=0}^n (t_n x_n)^{n-i} \sum_{\nu = \lambda \otimes 1^i} s_{\nu - \rho}(\boldsymbol{x}). $$
As this matches \eqref{rightsidemassaged}, the proof is complete.
\end{proof}

\section{Models for other Cartan types}

Ice models for other classicial groups, whose partition functions result in deformations of highest weight characters, were considered in \cite{bbcg, bs, hamel-king, hk-tokuyama-type, ivanov}. Their admissible states are again rectangular lattices in the six-vertex model whose Boltzmann weights match or closely resemble the weights of type $\Gamma$ and $\Delta$ given above, but one side of one boundary of the ice is modified. The modified boundary identifies pairs of edges using a ``u-turn" which contains a single vertex with its own local weight. An example of such a modified boundary appears in Figure \ref{fig:nonnested.ice.and.pattern} below. The ``u-turn'' bend may identify consecutive rows of ice (a ``non-nested'' bend as in \cite{bbcg, ivanov, hamel-king}) or identify rows symmetric about the middle of the state of ice (``nested'' bends as in \cite{bs, hk-tokuyama-type}), reflecting various embeddings of the classical group into the general linear group. The models with non-nested bends are amenable to study by the Hamiltonian techniques of this paper, and we demonstrate this with various examples in the following section.

For a strictly decreasing partition $\lambda = (\lambda_1>\cdots>\lambda_n)$, a family of ice models is defined in \cite{bbcg} -- which we'll denote $\mathfrak{C}_{\text{NN}}^\lambda$ -- whose underlying graph structure is a lattice of $2n$ rows and $\lambda_1$ columns; columns are numbered as before, but rows are numbered (from top to bottom) by $n, \bar n, n-1, \overline{n-1}, \cdots, 1, \bar 1$.  There is an edge connecting the far right side of rows $j$ and $\bar j$; note that this means that these ``bent edges" are non-nested for this model (hence the ``NN" subscript), in contrast to the numerous families of nested, bent ice models considered in \cite{bs}.  The boundary conditions are as before (though, of course, there is no right boundary for these models).  

As in the rectangular case, one can assign weights to each vertex to arrive at a weight for each state. When the weights are chosen particularly well, the corresponding partition function $\mathcal{Z}(\mathfrak{C}_{\text{NN}}^\lambda)$ has a meaningful Lie-theoretic interpretation.  In \cite{ivanov}, this is achieved with the following scheme: vertices in row $i$ are weighted with $\Delta$-weights from \cite{bbf-ice}, but with each $t_i$ specialized to a common value of $t$; vertices in row $\bar i$ are weighted with $\Gamma'$-weights, where the prime indicates that the index of the parameters is $i$, each $x_i$ is replaced by $x_i^{-1}$, and each $t_i$ specialized to a common value of $t$; an ``upward" bend is weighted $tx_i$; and a ``downward" bend is weighted $x_i^{-1}$.   We call these the $\Delta\Gamma'$ weights. In \cite{bbcg} a similar weighting scheme on $\mathfrak{C}_{\text{NN}}^\lambda$  --- but with up weight $-\sqrt{-tx_i}$ and down weight $\sqrt{x_i}^{-1}$ --- produces a partition function that is conjecturally related of a metaplectic Whittaker function on $\text{Sp}_{2r}$.

States of $\mathfrak{C}_{\text{NN}}^\lambda$ correspond to a family of patterns akin to the Gelfand-Tsetlin patterns mentioned earlier.  Specifically, let $A$ be a state of $\mathfrak{C}_{\text{NN}}^\lambda$, and for each $1 \leq i \leq n$ define a partition $\lambda^{(i)}$ by recording those columns $j$ so that the vertex at $(i,j)$ has its northern edge decorated ``up;" likewise for each $1 \leq i \leq n$ define $\lambda^{(\bar i)}$ by recording those columns $j$ so that the vertex at $(\bar i,j)$ has its northern edge decorated ``up."  These partitions can be arranged into a half-triangular array, and they satisfy a similar interleaving condition as Gelfand-Tsetlin patterns.  For $1 < i \leq n$, the given partitions satisfy one of the following:
\begin{itemize}
\item $\ell(\lambda^{(i)}) = \ell(\lambda^{(\bar i)}) = \ell(\lambda^{(i-1)})+1$, or
\item $\ell(\lambda^{(i)}) = \ell(\lambda^{(\bar i)})+1 = \ell(\lambda^{(i-1)})+1$.
\end{itemize}
(See Figure \ref{fig:nonnested.ice.and.pattern} for an example.)

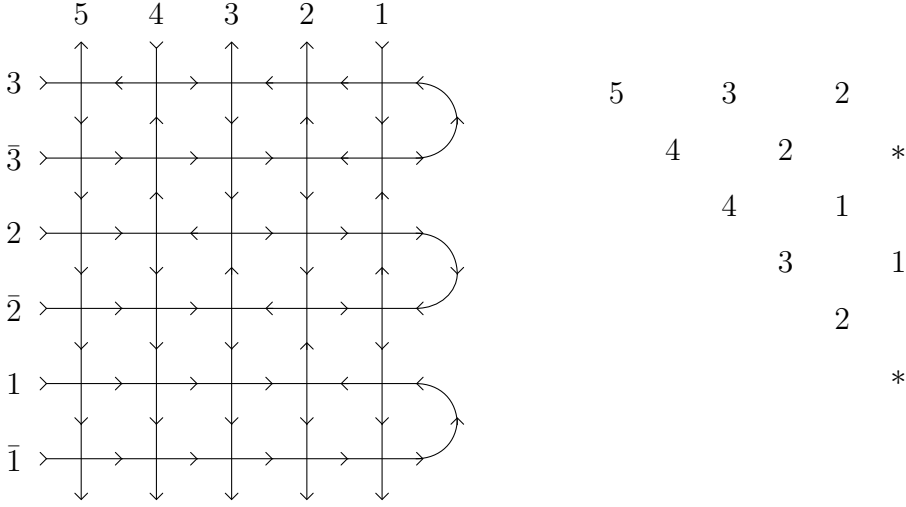
\begin{figure}[!ht]
\begin{subfigure}[h]{.48\textwidth}
  \centering
\begin{tikzpicture}[>=angle 90,shorten <=-1.5pt,shorten >=-1.5pt]

\node [label=left:$3$]      at (0,5) {};
\node [label=left:$\bar 3$] at (0,4) {};
\node [label=left:$2$]      at (0,3) {};
\node [label=left:$\bar 2$] at (0,2) {};
\node [label=left:$1$]      at (0,1) {};
\node [label=left:$\bar 1$] at (0,0) {};

\node [label=above:$5$] at (0.5,5.5) {};
\node [label=above:$4$] at (1.5,5.5) {};
\node [label=above:$3$] at (2.5,5.5) {};
\node [label=above:$2$] at (3.5,5.5) {};
\node [label=above:$1$] at (4.5,5.5) {};

\draw [>-] (0,5) -- (1,5);
\draw [>-] (0,4) -- (1,4);
\draw [>-] (0,3) -- (1,3);
\draw [>-] (0,2) -- (1,2);
\draw [>-] (0,1) -- (1,1);
\draw [>-] (0,0) -- (1,0);

\draw [<-] (0.5,5.5) -- (0.5,4.5);
\draw [>-] (0.5,4.5) -- (0.5,3.5);
\draw [>-] (0.5,3.5) -- (0.5,2.5);
\draw [>-] (0.5,2.5) -- (0.5,1.5);
\draw [>-] (0.5,1.5) -- (0.5,.5);
\draw [>->] (0.5,0.5) -- (0.5,-0.5);
\draw [<-] (1,5) -- (2,5);
\draw [>-] (1,4) -- (2,4);
\draw [>-] (1,3) -- (2,3);
\draw [>-] (1,2) -- (2,2);
\draw [>-] (1,1) -- (2,1);
\draw [>-] (1,0) -- (2,0);

\draw [>-] (1.5,5.5) -- (1.5,4.5);
\draw [<-] (1.5,4.5) -- (1.5,3.5);
\draw [<-] (1.5,3.5) -- (1.5,2.5);
\draw [>-] (1.5,2.5) -- (1.5,1.5);
\draw [>-] (1.5,1.5) -- (1.5,.5);
\draw [>->] (1.5,0.5) -- (1.5,-0.5);
\draw [>-] (2,5) -- (3,5);
\draw [>-] (2,4) -- (3,4);
\draw [<-] (2,3) -- (3,3);
\draw [>-] (2,2) -- (3,2);
\draw [>-] (2,1) -- (3,1);
\draw [>-] (2,0) -- (3,0);

\draw [<-] (2.5,5.5) -- (2.5,4.5);
\draw [>-] (2.5,4.5) -- (2.5,3.5);
\draw [>-] (2.5,3.5) -- (2.5,2.5);
\draw [<-] (2.5,2.5) -- (2.5,1.5);
\draw [>-] (2.5,1.5) -- (2.5,.5);
\draw [>->] (2.5,0.5) -- (2.5,-0.5);
\draw [<-] (3,5) -- (4,5);
\draw [>-] (3,4) -- (4,4);
\draw [>-] (3,3) -- (4,3);
\draw [<-] (3,2) -- (4,2);
\draw [>-] (3,1) -- (4,1);
\draw [>-] (3,0) -- (4,0);

\draw [<-] (3.5,5.5) -- (3.5,4.5);
\draw [<-] (3.5,4.5) -- (3.5,3.5);
\draw [>-] (3.5,3.5) -- (3.5,2.5);
\draw [>-] (3.5,2.5) -- (3.5,1.5);
\draw [<-] (3.5,1.5) -- (3.5,.5);
\draw [>->] (3.5,0.5) -- (3.5,-0.5);
\draw [<-<] (4,5) -- (5,5);
\draw [<->] (4,4) -- (5,4);
\draw [>->] (4,3) -- (5,3);
\draw [>-<] (4,2) -- (5,2);
\draw [<-<] (4,1) -- (5,1);
\draw [>->] (4,0) -- (5,0);

\draw [>-] (4.5,5.5) -- (4.5,4.5);
\draw [>-] (4.5,4.5) -- (4.5,3.5);
\draw [<-] (4.5,3.5) -- (4.5,2.5);
\draw [<-] (4.5,2.5) -- (4.5,1.5);
\draw [>-] (4.5,1.5) -- (4.5,.5);
\draw [>->] (4.5,0.5) -- (4.5,-0.5);
\draw [-]
	(5,5) arc (90:0:.5);
\draw [<-]
	(5.5,4.5) arc (0:-90:.5);

\draw [-]
	(5,3) arc (90:0:.5);
\draw [>-]
	(5.5,2.5) arc (0:-90:.5);

\draw [-]
	(5,1) arc (90:0:.5);
\draw [<-]
	(5.5,.5) arc (0:-90:.5);

\end{tikzpicture}
\end{subfigure} \hspace{20pt}
\begin{subfigure}[h]{.4\textwidth}
\begin{tikzpicture}[scale=.75]
  \centering
\node [label=$5$]      at (1,6) {};
\node [label=$3$]      at (3,6) {};
\node [label=$2$]      at (5,6) {};

\node [label=$4$]      at (2,5) {};
\node [label=$2$]      at (4,5) {};
\node [label=$*$]      at (6,5) {};

\node [label=$4$]      at (3,4) {};
\node [label=$1$]      at (5,4) {};

\node [label=$3$]      at (4,3) {};
\node [label=$1$]      at (6,3) {};

\node [label=$2$]      at (5,2) {};

\node [label=$*$]      at (6,1) {};
\end{tikzpicture}
\end{subfigure}
\caption{An element of $\mathfrak{C}_{\text{NN}}^{(5,3,2)}$ and its corresponding pattern. We've included an additional $*$ for those $\lambda^{(\bar i)}$ with $\ell(\lambda^{(i)})>\ell(\lambda^{(\bar i)})$.}
\label{fig:nonnested.ice.and.pattern}
\end{figure}

Given Corollary~\ref{cor:rectangular.ice.and.fermions}, it is natural to ask whether there is a method for interpreting the $\Delta\Gamma'$-weighting scheme for a state of ice in terms of a discrete time evolution on the associated pattern.  We have the following

\begin{corollary}\label{conj:nonnested.ice.and.fermions}
For $A$ a state of $\mathfrak{C}_{\text{NN}}^\lambda$ and $(i,\bar i)$ a row of $A$, we have $$\text{wt}_{\Delta\Gamma'}(A;(i,\bar i)) = \Braket{\lambda^{(i-1)}|\left(\beta_1 \psi_{-\frac{1}{2}}^* + \beta_2\right)\psi_{\lambda_1+\frac{1}{2}}e^{\phi_-(x_i^{-1};t)}|\lambda^{(\bar i)}}\Braket{\lambda^{(\bar i)}|e^{\phi_+(x_i;t)}\left(\alpha_1 \psi_{-\frac{1}{2}}+\alpha_2\right)|\lambda^{(i)}}$$ where 
\begin{align*}
\alpha_1&= \frac{t}{(-1)^i (t+1)}&&\alpha_2= x_i^{-1}&&
\beta_1 = \frac{(-1)^{i-1}}{(t+1)x_i^{\lambda_1+1}} && \beta_2 = \frac{1}{(t+1)x_i^{\lambda_1}}.
\end{align*}
\end{corollary}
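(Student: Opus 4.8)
The plan is to reduce the statement to Corollary~\ref{cor:rectangular.ice.and.fermions} together with two ``length-preserving'' analogues of Theorem~\ref{th:computing.weight.of.one.step}. First I would factor the weight of the double-row: since the ``u-turn'' vertex shares edges only with the last vertices of rows $i$ and $\bar i$,
$$\text{wt}_{\Delta\Gamma'}(A;(i,\bar i)) = \text{wt}_\Delta(\text{row }i)\cdot\text{wt}_{\text{bend}}\cdot\text{wt}_{\Gamma'}(\text{row }\bar i),$$
where each factor is an ordinary row weight whose rightmost horizontal edge is prescribed by the orientation of the bent edge. Arrow conservation in the six-vertex model then forces the triple $(\ell(\lambda^{(i)}),\ell(\lambda^{(\bar i)}),\ell(\lambda^{(i-1)}))$ into exactly the two possibilities listed before the statement (and, inductively from $\ell(\lambda^{(n)})=n$, forces $\ell(\lambda^{(i)})=i$), and in each case the bend orientation is pinned down. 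One checks (for instance against Figure~\ref{fig:nonnested.ice.and.pattern}) that the configuration $\ell(\lambda^{(i)})=\ell(\lambda^{(\bar i)})+1$ (call it Case B) is the ``upward'' bend of weight $tx_i$: there the rightmost horizontal edge of row $i$ points inward, so row $i$ loses a particle exactly as in the rectangular $\Delta$-model, while the rightmost edge of row $\bar i$ points outward and row $\bar i$ preserves its particle count; the configuration $\ell(\lambda^{(i)})=\ell(\lambda^{(\bar i)})$ (Case A) is the ``downward'' bend of weight $x_i^{-1}$, with these roles reversed.

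\textbf{Evaluating the four operator contributions.} For the row that drops a particle in each case, Corollary~\ref{cor:rectangular.ice.and.fermions}, applied with every $t_i$ set to $t$ and (for the $\Gamma'$-row) with $x_i$ replaced by $x_i^{-1}$, already identifies the row weight with $\Braket{\lambda^{(\bar i)}|e^{\phi_+(x_i;t)}\psi_{-1/2}|\lambda^{(i)}}$ up to the factor $(-1)^i(t+1)x_i$, and with $\Braket{\lambda^{(i-1)}|\psi_{\lambda_1+\frac12}e^{\phi_-(x_i^{-1};t)}|\lambda^{(\bar i)}}$ up to the factor $(t+1)x_i^{\lambda_1}$. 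For the row that preserves its particle count, the rightmost vertex sits in a forced configuration and the remaining $\lambda_1-1$ columns are precisely the ice computed by $\Braket{\lambda^{(\bar i)}|e^{\phi_+(x_i;t)}|\lambda^{(i)}}$, respectively $\Braket{\lambda^{(i-1)}|\psi_{-\frac12}^{*}\psi_{\lambda_1+\frac12}e^{\phi_-(x_i^{-1};t)}|\lambda^{(\bar i)}}$. These two bra-kets I would evaluate by rerunning the argument in the proof of Theorem~\ref{th:computing.weight.of.one.step}: write $\Ket{\lambda^{(\bar i)}}=\psi^{*}(z_1)\cdots\psi^{*}(z_{i-1})\Ket{0}\big\vert_{\boldsymbol z^{\lambda^{(\bar i)}}}$, commute the Hamiltonian past the $\psi^{*}(z_j)$ using Proposition~\ref{commutation} and Lemma~\ref{sumtheseries}, and extract the coefficient of $\boldsymbol z^{\lambda}$ — now with no trailing $\psi_{-1/2}$ (so one does not append a $0$ part to $\mu$) and, in the $\Gamma'$ case, with the extra $\psi_{-1/2}^{*}$ restoring the particle destroyed by $\psi_{\lambda_1+\frac12}$, which contributes the sign $(-1)^{i-1}$ coming from the $i-1$ particles of $\lambda^{(\bar i)}$ and the extra power of $x_i$ appearing in $\beta_1$. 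One checks that the results agree exactly with the $\Delta$- and $\Gamma'$-row weights of a length-preserving row read off Figure~\ref{fig:six.vertex.model}.

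\textbf{Assembling.} In Case A the length data kills the $\alpha_1\psi_{-1/2}$ term (it would drop the count) and the $\beta_1\psi_{-1/2}^{*}$ term (it would restore it), leaving $\alpha_2\,\Braket{\lambda^{(\bar i)}|e^{\phi_+(x_i;t)}|\lambda^{(i)}}\cdot\beta_2\,\Braket{\lambda^{(i-1)}|\psi_{\lambda_1+\frac12}e^{\phi_-(x_i^{-1};t)}|\lambda^{(\bar i)}}$; since $\alpha_2=x_i^{-1}$ supplies exactly the downward-bend weight and $\beta_2=1/((t+1)x_i^{\lambda_1})$ cancels the normalization from Corollary~\ref{cor:rectangular.ice.and.fermions}, this product equals $\text{wt}_\Delta(\text{row }i)\,x_i^{-1}\,\text{wt}_{\Gamma'}(\text{row }\bar i)$. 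In Case B only the $\alpha_1$ and $\beta_1$ terms survive; the constant $\alpha_1=t/((-1)^i(t+1))$ multiplies the $(-1)^i(t+1)x_i$ of Corollary~\ref{cor:rectangular.ice.and.fermions} to produce precisely the upward-bend weight $tx_i$, while $\beta_1=(-1)^{i-1}/((t+1)x_i^{\lambda_1+1})$ matches the normalization and sign of the length-preserving $\Gamma'$ bra-ket computed above, so once more the product equals the three-factor double-row weight. Comparing with the factorization of $\text{wt}_{\Delta\Gamma'}(A;(i,\bar i))$ in each case completes the proof.

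\textbf{Main obstacle.} I expect the technical heart to be the six-vertex bookkeeping: establishing cleanly which bend orientation attaches to which length case (so that $\psi_{\pm 1/2}$ and $\psi_{-1/2}^{*}$ are activated in the correct cases), and verifying that the two new length-preserving bra-kets reproduce the $\Delta$- and $\Gamma'$-row weights on the nose — in particular pinning down the exact scalar normalizations, including the extra power of $x_i$ and the sign $(-1)^{i-1}$, that force $\alpha_1,\alpha_2,\beta_1,\beta_2$ to take the stated values.
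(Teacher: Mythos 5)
Your proposal is correct, and its skeleton coincides with the paper's proof: the same factorization of the double-row weight into row $i$, bend, and row $\bar i$; the same observation that the particle counts $\ell(\lambda^{(i)}),\ell(\lambda^{(\bar i)}),\ell(\lambda^{(i-1)})$ kill all but one summand of each operator, with the up bend activating the $\alpha_1,\beta_1$ terms and the down bend the $\alpha_2,\beta_2$ terms; and the same use of Corollary~\ref{cor:rectangular.ice.and.fermions} for the length-dropping factor, so that $\alpha_1$ (resp.\ $\alpha_2$, $\beta_2$) absorbs the normalization and bend weight exactly as you say. Where you genuinely diverge is in the length-preserving bra-kets $\Braket{\lambda^{(\bar i)}|e^{\phi_+(x_i;t)}|\lambda^{(i)}}$ and $\Braket{\lambda^{(i-1)}|\psi^*_{-\frac{1}{2}}\psi_{\lambda_1+\frac{1}{2}}e^{\phi_-(x_i^{-1};t)}|\lambda^{(\bar i)}}$: you propose to rerun the generating-function argument of Theorem~\ref{th:computing.weight.of.one.step} (no padded zero part, no trailing $\psi_{-\frac{1}{2}}$) and compare directly with the weights of a row whose rightmost edge points outward, whereas the paper avoids any new coefficient extraction by a shift-and-pad trick: it adds $1$ to every part of $\lambda^{(i-1)}$ and $\lambda^{(\bar i)}$ and inserts an extra part equal to $1$, so the length-preserving row becomes a length-dropping row to which Corollary~\ref{cor:rectangular.ice.and.fermions} applies verbatim, and then undoes the shift at the level of states --- which is precisely where the sign $(-1)^{i-1}$ and the extra power $x_i^{-(\lambda_1+1)}$ in $\beta_1$ arise (the same sign you attribute to commuting the restored particle past the $i-1$ particles of $\lambda^{(\bar i)}$). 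Your route therefore requires proving a new length-preserving analogue of Theorem~\ref{th:computing.weight.of.one.step} and a fresh matching with the six-vertex row weights (the ``one checks'' steps you flag, which are routine and do come out on the nose, e.g.\ the length-preserving $\Delta$ bra-ket equals the row weight with no normalization), while the paper's reduction reuses the already-proven corollary; in exchange, your version treats the up and down bends symmetrically, whereas the paper writes out only the up case and declares the other similar.
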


\begin{proof}
Observe first that for a given row $(i,\bar i)$, only one summand from each operator in the bra-kets are relevant; for instance, if $\lambda^{(i)}$ and $\lambda^{(\bar i)}$ have the same number of parts (i.e., if the bend is oriented ``down"), then the bra-ket on the far right is 
\begin{align*}\Braket{\lambda^{(\bar i)}|e^{\phi_+(x_i;t)}\left(\alpha_1 \psi_{-\frac{1}{2}}+\alpha_2\right)|\lambda^{(i)}} &= 
\alpha_1\Braket{\lambda^{(\bar i)}|e^{\phi_+(x_i;t)} \psi_{-\frac{1}{2}}|\lambda^{(i)}} + \alpha_2\Braket{\lambda^{(\bar i)}|e^{\phi_+(x_i;t)}|\lambda^{(i)}}
\\&=\alpha_2\Braket{\lambda^{(\bar i)}|e^{\phi_+(x_i;t)}|\lambda^{(i)}},\end{align*} where the first summand vanishes because $\lambda^{(\bar i)}$ has too many parts to result from a leftward movement of particles of the state $\psi_{-\frac{1}{2}}|\lambda^{(i)}\rangle$.  Similarly for such a row we have 
$$ \Braket{\lambda^{(i-1)}|\left(\beta_1 \psi_{-\frac{1}{2}}^* + \beta_2\right)\psi_{\lambda_1+\frac{1}{2}}e^{\phi_-(x_i^{-1};t)}|\lambda^{(\bar i)}} = \beta_2\Braket{\lambda^{(i-1)}| \psi_{\lambda_1+\frac{1}{2}}e^{\phi_-(x_i^{-1};t)}|\lambda^{(\bar i)}}.$$
Hence we need to verify that the $\Delta \Gamma'$ weight of row $(i,\bar i)$ is
$$\left\{ \begin{array}{ll}
\beta_1\alpha_1\Braket{\lambda^{(i-1)}|\psi_{-\frac{1}{2}}^* \psi_{\lambda_1+\frac{1}{2}}e^{\phi_-(x_i^{-1};t)}|\lambda^{(\bar i)}}\Braket{\lambda^{(\bar i)}|e^{\phi_+(x_i;t)}\psi_{-\frac{1}{2}}|\lambda^{(i)}}&\text{ if bend is up}\\ 
\beta_2\alpha_2\Braket{\lambda^{(i-1)}| \psi_{\lambda_1+\frac{1}{2}}e^{\phi_-(x_i^{-1};t)}|\lambda^{(\bar i)}}\Braket{\lambda^{(\bar i)}|e^{\phi_+(x_i;t)}|\lambda^{(i)}}& \text{ if bend is down}
\end{array}\right.$$
We'll verify the former; the proof of the latter is similar.

Suppose that row $(i,\bar i)$ has its bend oriented up; this means that $\lambda^{(i)}$ has one more part than $\lambda^{(\bar i)}$, and that $\lambda^{(\bar i)}$ and $\lambda^{(i-1)}$ have the same number of parts.  Hence $\lambda^{(\bar i)}$ interleaves $\lambda^{(i)}$, and by Corollary \ref{cor:rectangular.ice.and.fermions} we have the $\Delta\Gamma'$-weight assigned to the upper row $i$ together with the bend is 
$$ \frac{t}{(-1)^i(t+1)} \Braket{\lambda^{(\bar i)}|e^{\phi_+(x_i;t)}\psi_{-\frac{1}{2}}|\lambda^{(i)}}.$$  

In accounting for the $\Delta\Gamma'$-weight associated to the lower row $\bar i$, note that since the bend weight is ``up" the partitions $\lambda^{(\bar i)}$ and $\lambda^{(i-1)}$ have the same number of parts.  Create partitions $\tilde \lambda^{(i-1)}$ and $\tilde \lambda^{(\bar i)}$ by adding one to each part of $\lambda^{(i-1)}$ and $\lambda^{(\bar i)}$, and let $\tilde \lambda_-^{(\bar i)}$ be the partition one gets by inserting an additional part of $1$ to $\tilde \lambda^{(i)}$. Corollary \ref{cor:rectangular.ice.and.fermions} says the $\Delta \Gamma'$-weight attached to the row of ice between $\tilde \lambda^{(\bar i)}_-$ and $\tilde \lambda^{(i-1)}$ is \begin{align*}
\frac{1}{(t+1)x_i^{\lambda_1+1}}\left\langle\tilde \lambda^{(i-1)} \Big \vert \psi_{\lambda_1+\frac{3}{2}}e^{\phi_-(x_i^{-1};t)}\right\vert&\left.\tilde \lambda^{(\bar i)}_-\right\rangle  = 
\frac{(-1)^{i-1}}{(t+1)x_i^{\lambda_1+1}}\Braket{\tilde \lambda^{(i-1)}|\psi_{\frac{1}{2}}^\ast \psi_{\lambda_1+\frac{3}{2}}e^{\phi_-(x_i^{-1};t)}|\tilde \lambda^{(\bar i)}}  \\
&= \frac{(-1)^{i-1}}{(t+1)x_i^{\lambda_1+1}}\Braket{\lambda^{(i-1)};-1|\psi_{-\frac{1}{2}}^\ast\psi_{\lambda_1+\frac{1}{2}}e^{\phi_-(x_i^{-1};t)}|\lambda^{(\bar i)};-1} \\
&= \frac{(-1)^{i-1}}{(t+1)x_i^{\lambda_1+1}}\Braket{\lambda^{(i-1)}|\psi_{-\frac{1}{2}}^\ast\psi_{\lambda_1+\frac{1}{2}}e^{\phi_-(x_i^{-1};t)}|\lambda^{(\bar i)}}. \\\end{align*}
\end{proof}

\begin{remark*}
One gets a similar result that connects the weighting scheme used in \cite{bbcg} to a bra-ket evaluation as in the previous corollary by replacing $\alpha_1$ and $\alpha_2$ above with 
\begin{align*}\tilde \alpha_1&= \frac{-\sqrt{-tx_i}}{(-1)^i x_i(t+1)}&&\tilde \alpha_2= \sqrt{x_i^{-1}}.\end{align*}
\end{remark*}

\section*{Appendix -- Wick's theorem and Jacobi-Trudi type identities}
\renewcommand{\thesection}{A}

We return to examining the equality $$ \langle \mu ; n-1 \mid e^{H_+[s(t)]} \mid \lambda; n \rangle =  \det_{1 \leq p,q \leq n} h_{\lambda_q-\mu_p-q+p}[x \mid y]$$ (i.e., the first equation from (\ref{plainsuperhamtau})) which presents a formula for the $\tau$-function of the superalgebra Hamiltonian of Definition \ref{superhamdef}. 

\subsection{Two ingredients: Time evolution of fermions and Wick's theorem}

Regarding the $t_q$ as discrete time parameters, then the time evolution of fermionic fields is expressed in the following result (whose proof is formally identical to that of Proposition~\ref{commutation}):

\begin{lemma} For any choice of $\boldsymbol{t} = \{ t_q \}$,
\begin{align*}
 e^{H[\boldsymbol{t}]} \psi(z) e^{-H[\boldsymbol{t}]} &= e^{-\sum_{q \geq 1} t_q z^q} \psi(z)\\
e^{H[\boldsymbol{t}]} \psi^\ast(z) e^{-H[\boldsymbol{t}]} &= e^{\sum_{q \geq 1} t_q z^q} \psi^\ast(z).
\end{align*}

\end{lemma}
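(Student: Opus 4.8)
\emph{Proof plan.} The argument is formally identical to the proof of Proposition~\ref{commutation}; one simply repeats it verbatim with the scalars $s_q(t)$ there replaced by the free parameters $t_q$.

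First I would record the infinitesimal statement: for each $q \geq 1$ we have $[J_q, \psi(z)] = -z^q \psi(z)$ and $[J_q, \psi^\ast(z)] = z^q \psi^\ast(z)$. This is precisely the computation already performed inside the proof of Proposition~\ref{commutation} --- expand $J_q = \sum_i : \psi^\ast_{i-q} \psi_i :$ against $\psi(z) = \sum_k \psi_{-k} z^{k - \frac12}$, push $\psi_{-k}$ through the normal-ordered product using $[ \psi_j, \psi^\ast_i ]_+ = \delta_{i,j}$, and resum over $k$. Multiplying by $t_q$ and summing over $q \geq 1$ yields
$$ H[\boldsymbol{t}]\, \psi(z) - \psi(z)\, H[\boldsymbol{t}] = -\Bigl( \sum_{q \geq 1} t_q z^q \Bigr) \psi(z), $$
and likewise with a $+$ sign for $\psi^\ast(z)$.

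Next I would exponentiate, exactly as in Proposition~\ref{commutation}. Put $f(z; \boldsymbol{t}) = -\sum_{q \geq 1} t_q z^q$; since this is an element of the coefficient ring it commutes with every operator on fermions, so the relation above reads $\text{ad}(H[\boldsymbol{t}])(\psi(z)) = f(z;\boldsymbol{t})\, \psi(z)$, whence by iteration $\text{ad}(H[\boldsymbol{t}])^k(\psi(z)) = f(z;\boldsymbol{t})^k\, \psi(z)$ for all $k \geq 0$. Using the Hadamard identity $e^X Y e^{-X} = \sum_{k \geq 0} \frac{1}{k!} \text{ad}(X)^k(Y)$ --- equivalently, running the $e^X e^Y = e^{e^{\text{ad}\, X} Y} e^X$ manipulation of Proposition~\ref{commutation} together with $e^{\psi(z)} = 1 + \psi(z)$ (valid since $\psi(z)^2 = 0$) --- gives
$$ e^{H[\boldsymbol{t}]} \psi(z)\, e^{-H[\boldsymbol{t}]} = \Bigl( \sum_{k \geq 0} \frac{f(z;\boldsymbol{t})^k}{k!} \Bigr) \psi(z) = e^{f(z;\boldsymbol{t})} \psi(z) = e^{-\sum_{q \geq 1} t_q z^q} \psi(z), $$
which is the first claimed formula. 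The computation for $\psi^\ast(z)$ is word for word the same, except that the reversed sign of $[J_q, \psi^\ast(z)]$ flips the sign in the exponent.

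The only point requiring attention --- and the sole obstacle, such as it is --- is that $H[\boldsymbol{t}]$ is an infinite sum of operators and the $t_q$ are formal parameters, so the manipulations above should be interpreted in the appropriate completion (formal power series in the $t_q$, with $\psi(z)$ and $\psi^\ast(z)$ regarded as formal generating series in $z$), where interchanging the bracket with the infinite sum over $q$ and rearranging the resulting series is legitimate. This is the same bookkeeping already implicit in Proposition~\ref{commutation}, so no genuinely new ingredient is needed.
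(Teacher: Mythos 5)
Your proposal is correct and matches the paper's treatment: the paper states that this lemma's proof is formally identical to that of Proposition~\ref{commutation}, and you reproduce exactly that argument --- the commutator $[J_q,\psi(z)]=-z^q\psi(z)$, summation against $t_q$, and exponentiation via $e^{\psi(z)}=1+\psi(z)$ (or, equivalently, the Hadamard identity). Nothing further is needed.
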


We use the version of Wick's theorem as in \cite{Zinn-Justin}.
For any infinite set of parameters $\boldsymbol{t} := \{ t_q \}$, define
$$ \psi_k[\boldsymbol{t}] := e^{H[\boldsymbol{t}]} \psi_k e^{-H[\boldsymbol{t}]} \quad \text{and} \quad \psi_k^\ast[\boldsymbol{t}] := e^{H[\boldsymbol{t}]} \psi_k^\ast e^{-H[\boldsymbol{t}]}. $$

\begin{lemma}[Wick's Theorem]
$$ \langle \ell \mid \psi_{i_1}[0] \psi_{i_2}[0] \cdots \psi_{i_n}[0] \psi^\ast_{j_1}[\boldsymbol{t}] \cdots \psi^\ast_{j_n}[\boldsymbol{t}] \mid \ell \rangle = \det_{1 \leq p,q \leq n} \langle \ell \mid \psi_{i_p}[0] \psi^\ast_{j_q}[\boldsymbol{t}] \mid \ell \rangle. $$
\end{lemma}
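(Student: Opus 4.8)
The plan is to peel off the $\boldsymbol t$-dependence and reduce to the classical free-fermion Wick theorem (the case $\boldsymbol t=0$), which is then proved by a short induction on $n$ using only the anticommutation relations \eqref{pluscomm}.

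\emph{Reduction to $\boldsymbol t=0$.} We have $\psi_{i_p}[0]=\psi_{i_p}$, and by the $\psi^\ast$-version of Proposition~\ref{commutation} (equivalently, the time-evolution lemma recorded just above), $\psi^\ast_{j}[\boldsymbol t]=\sum_{m\ge 0}h_m[\boldsymbol t]\,\psi^\ast_{j-m}$, where $\sum_{m\ge0}h_m[\boldsymbol t]z^m=\exp\!\big(\sum_{q\ge1}t_qz^q\big)$. Substituting and expanding multilinearly, the left-hand side becomes
\[ \sum_{m_1,\dots,m_n\ge 0}\Big(\prod_{q=1}^n h_{m_q}[\boldsymbol t]\Big)\,\langle\ell\mid\psi_{i_1}\cdots\psi_{i_n}\,\psi^\ast_{j_1-m_1}\cdots\psi^\ast_{j_n-m_n}\mid\ell\rangle. \]
Granting the $\boldsymbol t=0$ identity, the inner bra--ket equals $\det_{1\le p,q\le n}\langle\ell\mid\psi_{i_p}\psi^\ast_{j_q-m_q}\mid\ell\rangle$; its $q$-th column depends only on $m_q$, the weight $h_{m_q}[\boldsymbol t]$ depends only on $q$, and only finitely many $m_q$ give a nonzero column (the $\boldsymbol t=0$ two-point functions are Kronecker deltas). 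Hence multilinearity of $\det$ in its columns collapses the sum to $\det_{1\le p,q\le n}\langle\ell\mid\psi_{i_p}\psi^\ast_{j_q}[\boldsymbol t]\mid\ell\rangle=\det_{1\le p,q\le n}\langle\ell\mid\psi_{i_p}[0]\,\psi^\ast_{j_q}[\boldsymbol t]\mid\ell\rangle$, as required.

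\emph{The case $\boldsymbol t=0$.} From $\psi_i\psi^\ast_j=\delta_{i,j}-\psi^\ast_j\psi_i$ and $\psi_i\mid\ell\rangle=0$ for $i>\ell$ one gets the two-point function $\langle\ell\mid\psi_i\psi^\ast_j\mid\ell\rangle=\delta_{i,j}$ if $i>\ell$ and $0$ if $i<\ell$. If the $i_p$ are not pairwise distinct, or if some $i_p<\ell$, both sides vanish: the matrix has a repeated or zero row, while on the operator side one anticommutes the offending $\psi_{i_p}$ to the far left through the other $\psi$'s (a sign, no new terms) and invokes $\psi_a^2=0$ or $\langle\ell\mid\psi_{i_p}=0$ (valid for $i_p<\ell$). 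So assume all $i_p$ distinct and $>\ell$, and induct on $n$. Anticommuting the innermost annihilation operator $\psi_{i_n}$ rightward through $\psi^\ast_{j_1}\cdots\psi^\ast_{j_n}$ produces $\sum_{q=1}^n(-1)^{q-1}\delta_{i_n,j_q}\,\psi^\ast_{j_1}\cdots\widehat{\psi^\ast_{j_q}}\cdots\psi^\ast_{j_n}$ plus a term ending in $\psi_{i_n}$, which kills $\mid\ell\rangle$ since $i_n>\ell$. Using $\delta_{i_n,j_q}=\langle\ell\mid\psi_{i_n}\psi^\ast_{j_q}\mid\ell\rangle$ and the inductive hypothesis on each surviving $(n-1)$-fold bra--ket, this reconstructs the Laplace expansion of $\det_{1\le p,q\le n}\langle\ell\mid\psi_{i_p}\psi^\ast_{j_q}\mid\ell\rangle$ along its last row — up to the sign discussed next.

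\emph{The main obstacle: signs.} The delicate point throughout is the sign bookkeeping. Commuting the innermost $\psi$ past the $q-1$ creation operators to its right contributes $(-1)^{q-1}$, whereas cofactor expansion along the last row calls for $(-1)^{n+q}$; iterating, the procedure recovers $\det_{1\le p,q\le n}\langle\ell\mid\psi_{i_p}\psi^\ast_{j_q}\mid\ell\rangle$ up to the global factor $(-1)^{n(n-1)/2}$ that arises from reordering all $n$ of the $\psi$'s past all $n$ of the $\psi^\ast$'s into matched adjacent pairs — equivalently, in the sum over matchings the matching $\sigma\in S_n$ carries sign $(-1)^{n(n-1)/2}\operatorname{sgn}(\sigma)$, and one must fix the ordering of the operators (hence of the columns of the determinant) so that this global sign is absorbed into the normalization displayed in the statement. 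The only other ingredients needed — both immediate from \eqref{pluscomm} together with conservation of charge (the $J_0$-eigenvalue) — are that $\langle\ell\mid\psi_i\psi_j\mid\ell\rangle=0=\langle\ell\mid\psi^\ast_i\psi^\ast_j\mid\ell\rangle$, so that only $\psi$-to-$\psi^\ast$ matchings survive, and that the infinite sums appearing in the reduction step have only finitely many nonzero terms.
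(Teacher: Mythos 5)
Your proof is correct, and it is essentially the argument the paper has in mind: the paper gives no details beyond ``Wick's theorem may be proved by induction on $n$,'' and your induction on $n$ (after reducing to $\boldsymbol{t}=0$ by expanding $\psi^\ast_j[\boldsymbol{t}]=\sum_{m\ge 0}h_m[\boldsymbol{t}]\,\psi^\ast_{j-m}$ and using column-multilinearity of the determinant) supplies exactly those details; one could equally run the induction directly with the $\boldsymbol{t}$-dependent two-point functions, since $[\psi_i,\psi^\ast_j[\boldsymbol{t}]]_+$ is still a scalar, but your reduction is a perfectly valid variant. Your sign remark is also accurate and worth keeping: with the operators ordered literally as in the displayed statement, the recursion gives $\langle\ell\mid\psi_{i_1}\cdots\psi_{i_n}\psi^\ast_{j_1}[\boldsymbol{t}]\cdots\psi^\ast_{j_n}[\boldsymbol{t}]\mid\ell\rangle=(-1)^{n(n-1)/2}\det_{p,q}\langle\ell\mid\psi_{i_p}\psi^\ast_{j_q}[\boldsymbol{t}]\mid\ell\rangle$ (already at $n=2$, $\langle 0\mid\psi_{1/2}\psi_{3/2}\psi^\ast_{1/2}\psi^\ast_{3/2}\mid 0\rangle=-1$ while the $2\times 2$ determinant is $+1$), and the sign-free form holds when the annihilation operators are listed in reversed order $\psi_{i_n}\cdots\psi_{i_1}$ --- which is precisely the ordering the paper uses when it invokes the lemma in the appendix, e.g. $\psi_{\mu_n-n+1/2}[0]\cdots\psi_{\mu_1-1/2}[0]$ paired with rows indexed by $p$ in the natural order. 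So your argument proves the statement as the paper actually applies it; the displayed lemma should simply be read with that ordering convention, as you say.
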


Wick's theorem may be proved by induction on $n$.

\subsection{Proof of Jacobi-Trudi identities} As explained in \cite{Zinn-Justin}, the above two ingredients lead to a simple proof of the Jacobi-Trudi
identities, whose proof we include here for completeness.

\begin{theorem} Let $\mu, \lambda$ be partitions with exactly $n$ parts (and parts equal to $0$ allowed). Then for any choice of $\ell$,
$$  \langle \mu ; \ell \mid e^{H[\boldsymbol{t}]} \mid \lambda; \ell \rangle = \det_{1 \leq p,q \leq n} h_{\lambda_q-\mu_p-q+p}[\boldsymbol{t}] $$
where
$$ h_k[\boldsymbol{t}] := \left[ e^{\sum_{q \geq 1} t_q z^q} \right] \mid_{z^{k}}, $$
and the notation $\mid_{z^k}$ indicates that we take the coefficient of $z^k$ in the given expression.
\end{theorem}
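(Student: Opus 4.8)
The plan is to carry out exactly the two-ingredient strategy that the two preceding lemmas are designed to feed. First I would realize both states as fermion strings on shifted vacua: by definition $\Ket{\lambda;\ell} = \psi^\ast_{a_1}\cdots\psi^\ast_{a_n}\Ket{\ell-n}$ with $a_q := \ell+\lambda_q-q+\tfrac12$, and dually $\Bra{\mu;\ell} = \Bra{\ell-n}\psi_{b_1}\cdots\psi_{b_n}$ with $b_p := \ell+\mu_p-p+\tfrac12$. Then I insert $e^{-H[\boldsymbol{t}]}e^{H[\boldsymbol{t}]}$ between consecutive creation operators to push $e^{H[\boldsymbol{t}]}$ all the way to the right, using that $e^{H[\boldsymbol{t}]}\Ket{\ell-n}=\Ket{\ell-n}$: the state $\Ket{\ell-n}$ is the Fermi sea occupying precisely the sites of index $\le \ell-n-\tfrac12$, so each $J_q$ with $q\ge1$, which moves a particle to a strictly smaller index, annihilates it. Writing also $\psi_{b_p}=\psi_{b_p}[0]$, the bracket becomes
$$\Braket{\ell-n|\psi_{b_1}[0]\cdots\psi_{b_n}[0]\,\psi^\ast_{a_1}[\boldsymbol{t}]\cdots\psi^\ast_{a_n}[\boldsymbol{t}]|\ell-n},$$
which is exactly the shape required by Wick's Theorem (with the label $\ell$ there replaced by $\ell-n$), so it equals $\det_{1\le p,q\le n}\Braket{\ell-n|\psi_{b_p}[0]\psi^\ast_{a_q}[\boldsymbol{t}]|\ell-n}$.

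It then remains to evaluate each two-point function. The time-evolution lemma gives $e^{H[\boldsymbol{t}]}\psi^\ast(z)e^{-H[\boldsymbol{t}]} = e^{\sum_{q\ge1}t_qz^q}\psi^\ast(z)$, so extracting the coefficient of $z^{j+1/2}$ yields $\psi^\ast_j[\boldsymbol{t}] = \sum_{m\ge0}h_m[\boldsymbol{t}]\,\psi^\ast_{j-m}$. Substituting this, and using that $\Braket{\ell-n|\psi_i\psi^\ast_k|\ell-n}$ equals $1$ when $i=k$ is an unoccupied site of $\Ket{\ell-n}$ and $0$ otherwise, collapses the sum to $h_{a_q-b_p}[\boldsymbol{t}]$, provided $b_p\ge\ell-n+\tfrac12$; this inequality reads $\mu_p\ge p-n$, which always holds since $0\le\mu_p$ and $p\le n$, so the indicator never truncates the determinant. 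Since $a_q-b_p=\lambda_q-\mu_p-q+p$, the matrix entry is $h_{\lambda_q-\mu_p-q+p}[\boldsymbol{t}]$ and the theorem follows.

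The one genuinely delicate point is sign bookkeeping. One must take the dual state $\Bra{\mu;\ell}$ with its annihilation operators in the order $\psi_{b_1}\cdots\psi_{b_n}$, so that after commuting $e^{H[\boldsymbol{t}]}$ through, the operator string already sits in the Wick pattern with $p$-th annihilation operator $\psi_{b_p}[0]$ matching the $p$-th row of the determinant; a different ordering convention would introduce a spurious $(-1)^{\binom{n}{2}}$. The remaining ingredients — the $e^{H[\boldsymbol{t}]}$-invariance of $\Ket{\ell-n}$, the automatic support condition on $b_p$, and the elementary evaluation of $\Braket{\ell-n|\psi_i\psi^\ast_k|\ell-n}$ from the anticommutation relations — are routine, and Wick's Theorem together with the time-evolution lemma supply the rest.
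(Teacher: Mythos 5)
Your strategy is exactly the paper's: realize both states as fermion strings over the shifted vacuum $\Ket{\ell-n}$, use $e^{H[\boldsymbol{t}]}\Ket{\ell-n}=\Ket{\ell-n}$ to push the Hamiltonian onto that vacuum, apply Wick's theorem, and evaluate the two-point functions with the time-evolution lemma. Your mode expansion $\psi^\ast_j[\boldsymbol{t}]=\sum_{m\ge 0}h_m[\boldsymbol{t}]\psi^\ast_{j-m}$ is a perfectly good substitute for the paper's reduction to $\langle 1\mid e^{H[\boldsymbol{t}]}\psi^\ast_{k+1/2}\mid 0\rangle$, and the support condition $\mu_p\ge p-n$ is handled correctly. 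However, the one step you flagged as delicate is precisely where the write-up goes wrong: the dual state the theorem requires is $\Bra{\mu;\ell}=\Bra{\ell-n}\psi_{b_n}\cdots\psi_{b_1}$ (adjointing reverses the order), not $\Bra{\ell-n}\psi_{b_1}\cdots\psi_{b_n}$. Indeed, at $\boldsymbol{t}=0$ and $\lambda=\mu$ the determinant is unitriangular, hence equal to $1$, whereas your straight-order string pairs with $\Ket{\mu;\ell}$ to give $(-1)^{\binom{n}{2}}$; already for $n=2$ your bra would make the stated identity false. The reversed order is also the convention the paper itself uses, e.g.\ $\langle -n\mid \psi_{\mu_n-n+1/2}\cdots\psi_{\mu_1-1/2}$ in its proof.

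The reason your computation nonetheless lands on the correct formula is that you read the Wick lemma literally as printed, and that printed form carries the same ordering looseness. The correct identity pairs the innermost annihilator with the creation string: $\Braket{m\mid\psi_{i_n}\cdots\psi_{i_1}\psi^\ast_{j_1}[\boldsymbol{t}]\cdots\psi^\ast_{j_n}[\boldsymbol{t}]\mid m}=\det_{1\le p,q\le n}\Braket{m\mid\psi_{i_p}\psi^\ast_{j_q}[\boldsymbol{t}]\mid m}$ (test $n=2$, $i_1=j_1$, $i_2=j_2$ at $\boldsymbol{t}=0$: the string $\psi_{i_1}\psi_{i_2}\psi^\ast_{j_1}\psi^\ast_{j_2}$ gives $-1$, not $1$), so with the annihilators written in the order $\psi_{i_1}\cdots\psi_{i_n}$ an extra $(-1)^{\binom{n}{2}}$ appears. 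Thus your two sign slips cancel, and it is your convention, not the reversed one, that would create a spurious $(-1)^{\binom{n}{2}}$ once Wick's theorem is applied in its correct form. The fix is purely bookkeeping: take $\Bra{\mu;\ell}=\Bra{\ell-n}\psi_{b_n}\cdots\psi_{b_1}$ and apply Wick in the form displayed above with $i_p=b_p$, $j_q=a_q$; every other step of your argument then goes through verbatim and reproduces the paper's proof.
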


\begin{proof}
First note that
$$ \langle \mu ; 0 \mid e^{H[\boldsymbol{t}]} \mid \lambda ; 0 \rangle = \langle -n \mid \psi_{\mu_n-n+1/2} \cdots \psi_{\mu_1 - 1/2} e^{H[\boldsymbol{t}]} \psi^\ast_{\lambda_1 - 1/2} \cdots \psi^\ast_{\lambda_n-n+1/2} \mid -n \rangle, $$
where we've padded $\mu$ and/or $\lambda$ with additional 0's so that each has $n$ parts. This equals
\begin{align*}
\langle -n \mid \psi_{\mu_n-n+1/2}[0] \cdots \psi_{\mu_1 - 1/2}[0] \psi^\ast_{\lambda_1 - 1/2}[\boldsymbol{t}] \cdots \psi^\ast_{\lambda_n-n+1/2}[\boldsymbol{t}] e^{H[\boldsymbol{t}]} \mid -n \rangle,
\end{align*}
since $\psi_k[\boldsymbol{t}] = e^{H[\boldsymbol{t}]} \psi_k e^{-H[\boldsymbol{t}]}$ and hence $\psi_{k}[0] = \psi_k$. Remembering that
$$ H[\boldsymbol{t}] = \sum_q J_q t_q, $$
and $J_q$ moves fermions $q$ units to the left, then $e^{H[\boldsymbol{t}]} \mid -n \rangle = \, \mid \! -n \rangle$, as the only surviving term in the series
expansion of $e^{H[\boldsymbol{t}]}$ is the $1$ (since no fermions in $\mid -n \rangle$ may be displaced leftward).  We then apply Wick's theorem to
\begin{align*}
& \langle -n \mid \psi_{\mu_n-n+1/2}[0] \cdots \psi_{\mu_1 - 1/2}[0] \psi^\ast_{\lambda_1 - 1/2}[\boldsymbol{t}] \cdots \psi^\ast_{\lambda_n-n+1/2}[\boldsymbol{t}] \mid -n \rangle \\
& = \det_{1 \leq p,q \leq n} \langle -n \mid \psi_{\mu_p-p+1/2}[0] \psi^\ast_{\lambda_q-q+1/2}[\boldsymbol{t}] \mid -n \rangle 
= \det_{1 \leq p,q \leq n} \langle -n \mid \psi_{\mu_p-p+1/2}[0] \psi^\ast_{\lambda_q-q+1/2}[\boldsymbol{t}] e^{H[\boldsymbol{t}]} \mid -n \rangle \\
& = \det_{1 \leq p,q \leq n} \langle -n \mid \psi_{\mu_p-p+1/2} e^{H[\boldsymbol{t}]} \psi^\ast_{\lambda_q-q+1/2} \mid -n \rangle.
\end{align*}
As $\lambda_q \geq 0$, then $\psi^\ast_{\lambda_q-q+1/2} \mid -n \rangle$ is non-zero, and the above bra-ket clearly only depends on the difference between $\mu_p-p+1/2$ and $\lambda_q-q+1/2$ (and not on the choice of $-n$ in the bra and ket). Thus replacing $-n$ by $0$ in the bra-ket and rewriting we have:
$$ \det_{1 \leq p,q \leq n} \langle 0 \mid \psi_{1/2} e^{H[\boldsymbol{t}]} \psi^\ast_{\lambda_q-\mu_p-q+p+1/2} \mid 0 \rangle = \det_{1 \leq p,q \leq n} \langle 1 \mid e^{H[\boldsymbol{t}]} \psi^\ast_{\lambda_q-\mu_p-q+p+1/2} \mid 0 \rangle. $$
Finally, using that
$$ e^{H[\boldsymbol{t}]} \psi^\ast(z) = \psi^\ast(z) e^{H[\boldsymbol{t}]} e^{\sum_{q \geq 1} t_q z^q}, $$
then
\begin{align*} h_k[\boldsymbol{t}] & := \langle 1 \mid e^{H[\boldsymbol{t}]} \psi^\ast_{k+1/2} \mid 0 \rangle = \langle 1 \mid e^{H[\boldsymbol{t}]} \psi^\ast(z) \mid 0 \rangle \mid_{z^{k}} \\
& = \left[ e^{\sum_{q \geq 1} t_q z^q} \langle 1 \mid \psi^\ast(z) \mid 0 \rangle \right] \mid_{z^{k}} = \left[ e^{\sum_{q \geq 1} t_q z^q} \right] \mid_{z^{k}}.
\end{align*}
This gives the result.
\end{proof}

\subsection{Symmetric functions as tau functions}

\subsubsection{skew Schur functions}

If we make the substitution $t_q = \frac{1}{q} \sum_{i=1}^n x_i^q$, and write the resulting $h_k[\boldsymbol{t}] = h_k[x_1, \ldots, x_n]$, these
are the familiar homogeneous complete symmetric functions in $x_1, \ldots, x_n$ explicitly given as
$$ h_k[x_1, \ldots, x_n] = \sum_{1 \leq i_1 \leq i_2 \leq \cdots \leq i_k \leq n} x_{i_1} \cdots x_{i_k}. $$
Indeed, with this substitution,
$$  e^{\sum_{q \geq 1} t_q z^q} = \prod_{i=1}^n e^{-\log (1 - zx_i)} = \prod_{i=1}^n (1 - zx_i)^{-1}, $$
whose coefficient of $z^k$ is precisely $h_k[x_1, \ldots, x_n]$.
Applying this to the above theorem results in the determinant expression appearing in the familiar Jacobi-Trudi identity:
$$  \langle \mu ; n \mid e^{H[\boldsymbol{t}]} \mid \lambda; n \rangle =  \det_{1 \leq p,q \leq n} h_{\lambda_q-\mu_p-q+p}[x_1, \ldots, x_n]. $$ 
This demonstrates that $\langle \mu ; n \mid e^{H[\boldsymbol{t}]} \mid \lambda; n \rangle$ is the skew Schur function commonly denoted by
$s_{\lambda / \mu}(x_1, \ldots, x_n).$

\subsubsection{supersymmetric skew Schur functions} More generally, we take
$$ t_q = \frac{1}{q} \left[ \sum_{i=1}^n x_i^q - \sum_{j=1}^m (-y_j)^q \right], $$
so that
$$  e^{\sum_{q \geq 1} t_q z^q} = \prod_{i=1}^n (1 - zx_i)^{-1} \prod_{j=1}^m (1 + zy_j). $$
The coefficient of $z^k$ in this expression, explicitly given by
\begin{equation} h_k[x_1, \ldots, x_n \mid y_1, \ldots, y_m] := \sum_{i=0}^k h_i[x_1, \ldots, x_n] e_{k-i}[y_1, \ldots, y_m] \label{splithk} \end{equation}
where $h_i$ is the $i$-th complete homogeneous symmetric polynomial as above and $e_j$ is the $j$-th elementary symmetric polynomial defined by
$$ e_j[y_1, \ldots, y_m] := \sum_{1 \leq i_1 < \cdots < i_j \leq m} y_{i_1} \cdots y_{i_j}.$$

Hence we may express
$$ \langle \mu ; n \mid e^{H[\boldsymbol{t}]} \mid \lambda; n \rangle =  \det_{1 \leq p,q \leq n} h_{\lambda_q-\mu_p-q+p}[x_1, \ldots, x_n \mid y_1, \ldots y_m]. $$
This is the Jacobi-Trudi identity for supersymmetric skew Schur functions. (See for example Section 6 of Macdonald \cite{Mac-themevar}, 
formula before displayed equation (6.21). His notation uses $\boldsymbol{x} \mid \mid \boldsymbol{y}$ as opposed to a single $\mid$. The non-skew version (i.e. $\mu = \emptyset$) is also
nicely presented in Equation (1.7) of Moens and Van der Jeugt \cite{MoensVanderJeugt}.)

\subsubsection{Tokuyama's generating function}

We keep the Hamiltonian $H[\boldsymbol{t}]$ as in the prior example with $n=1$; that is,
$$ t_q := \frac{1}{q} (x^q - (-y)^q) $$
In order to obtain Tokuyama's generating function, consider the modified bra-kets of the form:
$$ \langle \mu ; n-1 \mid e^{H[\boldsymbol{t}]}  \psi_{-1/2} \mid \lambda; n \rangle, $$
where $\mu$ has $n-1$ parts and $\lambda$ has $n$ parts.

Recall that the state $\left| \lambda; n \right>$ is defined by
$$ \left| \lambda; n \right>  = \psi_{\lambda_1 + n - 1/2}^\ast  \psi_{\lambda_2 + n - 3/2}^\ast \cdots \psi_{\lambda_q - q + n + 1/2}^\ast \cdots  \psi_{\lambda_n + 1/2}^\ast \left| 0 \right> $$  
so that
$$ \psi_{-1/2} \left| \lambda ; n \right> = (-1)^n \psi_{\lambda_1 + n - 1/2}^\ast  \psi_{\lambda_2 + n - 3/2}^\ast \cdots \psi_{\lambda_n + 1/2}^\ast \left| -1 \right>.$$
Thus our bra-ket can be rewritten by our earlier definitions as
$$ (-1)^n \langle \mu ; n-1 \mid \psi_{\lambda_1 + n - 1/2}^\ast[\boldsymbol{t}]  \psi_{\lambda_2 + n - 3/2}^\ast[\boldsymbol{t}] \cdots \psi_{\lambda_n + 1/2}^\ast[\boldsymbol{t}] e^{H[\boldsymbol{t}]} \mid -1 \rangle $$
and again $e^{H[\boldsymbol{t}]} \mid -1 \rangle = \mid -1 \rangle$, so $e^{H[\boldsymbol{t}]}$ can be removed from the bra-ket. Now
$$ \langle \mu;n-1 \mid = \langle 0 \mid \psi_{\mu_{n-1}+1/2} \cdots \psi_{\mu_p -p + n-1/2} \cdots \psi_{\mu_1 + n-3/2} $$
If we pad $\mu$ with an additional part $\mu_n = 0$, we may rewrite this as:
$$ \langle -1 \mid \psi_{\mu_n - 1/2} \psi_{\mu_{n-1}+1/2} \cdots \psi_{\mu_p -p + n-1/2} \cdots \psi_{\mu_1 + n-3/2} $$
Applying Wick's theorem to
\begin{align*}
& \langle -1 \mid \psi_{\mu_n - 1/2} \psi_{\mu_{n-1}+1/2} \cdots \psi_{\mu_1 + n-3/2} \psi^\ast_{\lambda_1 +n- 1/2}[\boldsymbol{t}] \cdots \psi^\ast_{\lambda_n+1/2}[\boldsymbol{t}] \mid -1 \rangle \\
& = \det_{1 \leq p,q \leq n} \langle -1 \mid \psi_{\mu_p-p+n-1/2}[0] \psi^\ast_{\lambda_q-q+n+1/2}[\boldsymbol{t}] \mid -1 \rangle \\
& = \det_{1 \leq p,q \leq n} \langle -1 \mid \psi_{\mu_p-p+n-1/2} e^{H[\boldsymbol{t}]} \psi^\ast_{\lambda_q-q+n+1/2} \mid -1 \rangle \\
& =  \det_{1 \leq p,q \leq n} \langle -1 \mid \psi_{-1/2} e^{H[\boldsymbol{t}]} \psi^\ast_{\lambda_q-\mu_p-q+p+1/2} \mid -1 \rangle \\
& =  \det_{1 \leq p,q \leq n} \langle 0 \mid e^{H[\boldsymbol{t}]} \psi^\ast_{\lambda_q-\mu_p-q+p+1/2} \mid -1 \rangle.
\end{align*}

Finally, using that
$$ e^{H[\boldsymbol{t}]} \psi^\ast(z) = \psi^\ast(z) e^{H[\boldsymbol{t}]} e^{\sum_{q \geq 1} t_q z^q}, $$
we have
\begin{align*} &  \langle 0 \mid e^{H[\boldsymbol{t}]} \psi^\ast_{k+1/2} \mid -1 \rangle = \langle 0 \mid e^{H[\boldsymbol{t}]} \psi^\ast(z) \mid -1 \rangle \mid_{z^{k}} \\
& = \left[ e^{\sum_{q \geq 1} t_q z^q} \langle 0 \mid \psi^\ast(z) \mid -1 \rangle \right] \mid_{z^{k}} = \left[ e^{\sum_{q \geq 1} t_q z^q} \right] \mid_{z^{k+1}} =: h_{k+1}[\boldsymbol{t}].
\end{align*}
So we conclude that
$$ \langle \mu ; n-1 \mid e^{H[\boldsymbol{t}]}  \psi_{-1/2} \mid \lambda; n \rangle = (-1)^n \det_{1 \leq p,q \leq n} h_{\lambda_q-\mu_p-q+p+1}[\boldsymbol{t}] $$
and this latter expression is $(-1)^n s_{\lambda / \mu}(x \mid y)$ with $t_q$ chosen as above. To summarize:
\begin{proposition} The bra-ket appearing in Theorem~\ref{th:computing.weight.of.one.step}, with 
$$ s_q := \frac{1}{q} (x^q - (-y)^q) $$
is a supersymmetric Schur function:
$$ \langle \mu ; n-1 \mid e^{H[\boldsymbol{s}]}  \psi_{-1/2} \mid \lambda; n \rangle = (-1)^n s_{\lambda / \mu}(x \mid y). $$
In particular, taking $y = tx$, then $s_q = s_q^+(t)$ with $n=1$ as in Definition~\ref{superhamdef}.
\end{proposition}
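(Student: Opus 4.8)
The plan is to reduce the bra-ket to the Jacobi--Trudi determinant already obtained in this appendix and then specialize the parameters. First, unwinding the definition \eqref{partitionwithshift} of $\mid \lambda; n \rangle$ and commuting $\psi_{-1/2}$ rightward past the $n$ creation operators $\psi^\ast_{\lambda_i - i + n + 1/2}$ (each of which has positive index, so no contraction occurs) produces a sign $(-1)^n$ and converts the ket to $\mid -1 \rangle$. On the other side I would pad $\mu$ with a zero part $\mu_n = 0$, which does not change the state by the remark following \eqref{partitionwithshift}, so that $\langle \mu; n-1 \mid$ becomes, against $\langle -1 \mid$, a product of exactly $n$ annihilation operators $\psi_{\mu_p - p + n - 1/2}$. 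This is precisely the shape required by Wick's theorem.

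Next I would run the time-evolution/Wick argument verbatim from the Jacobi--Trudi theorem above. Since each $J_q$ in $H[\boldsymbol{s}]$ only displaces fermions leftward and none can move in $\mid -1 \rangle$, one has $e^{H[\boldsymbol{s}]} \mid -1 \rangle = \mid -1 \rangle$; using this together with $e^{H[\boldsymbol{s}]}\psi^\ast_a = \psi^\ast_a[\boldsymbol{s}]\, e^{H[\boldsymbol{s}]}$ turns the bra-ket into $(-1)^n$ times a vacuum expectation $\langle -1 \mid \psi_{\cdots}[0]\cdots \psi^\ast_{\cdots}[\boldsymbol{s}]\cdots \mid -1 \rangle$ with the $n$ annihilation operators $\psi_{\mu_p - p + n - 1/2}[0]$ followed by the $n$ creation operators $\psi^\ast_{\lambda_q - q + n + 1/2}[\boldsymbol{s}]$. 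Wick's theorem then collapses this to $(-1)^n \det_{1 \le p,q \le n} \langle -1 \mid \psi_{\mu_p - p + n - 1/2}\, e^{H[\boldsymbol{s}]}\, \psi^\ast_{\lambda_q - q + n + 1/2} \mid -1 \rangle$; each entry depends only on $\lambda_q - \mu_p - q + p$, and pushing $e^{H[\boldsymbol{s}]}$ past $\psi^\ast(z)$ via the time-evolution lemma above (proved as in Proposition~\ref{commutation}) and extracting the relevant power of $z$ identifies the $(p,q)$ entry as $h_{\lambda_q - \mu_p - q + p + 1}[\boldsymbol{s}]$, where $h_k[\boldsymbol{s}] := \bigl[\,e^{\sum_{q \ge 1} s_q z^q}\,\bigr]\big\vert_{z^k}$.

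It remains to identify the determinant. Substituting $s_q = \tfrac{1}{q}\bigl(x^q - (-y)^q\bigr)$ gives $e^{\sum_{q \ge 1} s_q z^q} = (1-zx)^{-1}(1+zy)$, so $h_k[\boldsymbol{s}] = h_k[x \mid y]$ as in \eqref{splithk}, and the resulting determinant is precisely the Jacobi--Trudi expression \eqref{plainsuperhamtau} for the skew supersymmetric Schur polynomial $s_{\lambda/\mu}(x \mid y)$ (cf.\ p.~22 of Macdonald \cite{Mac-themevar}, and Moens--Van der Jeugt \cite{MoensVanderJeugt} in the non-skew case). Combined with the accumulated $(-1)^n$ this is the asserted identity, and the final sentence follows since $y = tx$ gives $s_q = \tfrac{1}{q} x^q\bigl(1 - (-t)^q\bigr) = s_q^+(t)$ for $n = 1$ in Definition~\ref{superhamdef}. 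Since every ingredient is already established, the only real care required is tracking the signs when moving $\psi_{-1/2}$ past the creation operators and in the Wick expansion, and keeping the half-integer index shifts consistent; I expect that bookkeeping to be the only (and mild) obstacle.
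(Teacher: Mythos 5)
Your proposal is correct and follows essentially the same route as the paper's own proof: commuting $\psi_{-1/2}$ past the creation operators to pick up $(-1)^n$ and pass to the charge $-1$ vacuum, padding $\mu$ with a zero part, invoking $e^{H[\boldsymbol{s}]}\mid -1\rangle = \mid -1\rangle$, applying Wick's theorem, and identifying the matrix entries as $h_{\lambda_q-\mu_p-q+p+1}[\boldsymbol{s}]$ before specializing $s_q$. The only caveat is the appeal to \eqref{plainsuperhamtau} at the end: the determinant you (and the paper) obtain carries the extra $+1$ in the index of $h$, so the final identification with $s_{\lambda/\mu}(x\mid y)$ is asserted at exactly the same level of detail as in the paper's last line rather than being literally read off from \eqref{plainsuperhamtau}.
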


\bibliographystyle{acm}
\bibliography{vertham}

\end{document}